\numberwithin{equation}{section}
\newtheorem{theorem}{Theorem}[section]
\newtheorem{corollary}[theorem]{Corollary}
\newtheorem{lemma}[theorem]{Lemma}
\newtheorem{proposition}[theorem]{Proposition}
\newtheorem{definition}[theorem]{Definition}
\newtheorem{
ples}[theorem]{Examples}
\newtheorem{example}[theorem]{Example}
\newtheorem{remark}[theorem]{Remark}
 \DeclareMathOperator{\End}{End}
\begin{document}

\title[ para-K\"{a}hler hom-Lie algebras]
 { para-K\"{a}hler hom-Lie algebras}

\bibliographystyle{amsplain}

\author[Esmaeil Peyghan and Leila Nourmohammadifar]{ E. Peyghan and L. Nourmohammadifar}
\address{Department of Mathematics, Faculty of Science, Arak University,
Arak, 38156-8-8349, Iran.}
\email{e-peyghan@araku.ac.ir,\ l-nourmohammadifar@phd.araku.ac.ir}


\keywords{ almost para-Hermitian structure, hom-left symmetric aslgebra, hom-Levi-Civita product, para-K\"{a}hler hom-Lie algebras, phase space.}

\subjclass[2010]{17A30, 17D25, 53C15, 53D05.}


\begin{abstract}
In this paper, we introduce the notions of pseudo-Riemannian, para-Hermitian and para-K\"{a}hler structures on hom-Lie
algebras. In addition, we present  the characterization of these structures. Also, we provide  an example 
including  these structures. We then  introduce the phase space of a hom-Lie algebra and using the hom-left
symmetric product, we show that a para-K\"{a}hler hom-Lie algebra gives a phase space and conversely, we
can construct a para-K\"{a}hler hom-Lie algebra using a phase space.

\end{abstract}

\maketitle


\section{Introduction}
In the study of deformations of the Witt and the Virasoro algebras, Hartwig, Larsson, and Silvestrov introduced the notion of hom-Lie algebras \cite{HLS}. Indeed, some $q$-deformations of the
Witt and the Virasoro algebras have the structure of a hom-Lie algebra \cite{HLS, H}. This algebraic structure has important role among some mathematicians, because of close
relation to discrete and deformed vector fields and differential calculus \cite{HLS, LS}. In particular, hom-Lie algebras on semisimple Lie
algebras, omni-hom-Lie algebras and quadratic hom-Lie algebras have been studied in \cite{JL}, \cite{SX} and \cite{BM}, respectively. Also, authors have studied cohomology and
homology theories in \cite{AEM, CS, Y}, representation theory in \cite{S} and hom-Lie Bialgebras in \cite{Y1}.

An almost product structure on a manifold $M$ is a field $K$ of involutive
endomorphisms, i.e., $K^2=Id_{TM}$. Moreover, if the eigendistributions $T^{\pm}M$ with eigenvalues $\pm 1$ have the same constant rank, then $K$ is called almost para-complex structure. In this case the dimension of $M$ must to be even. An almost para-Hermitian structure is a pair $(K,g)$ such that $g(\cdot, \cdot)=-g(K\cdot, K\cdot)$ of an almost para-complex structure $K$ and a pseudo-Riemannian metric $g$. A manifold $M$ is called almost para-Hermitian manifold if
it is endowed with an almost para-Hermitian structure $(K,g)$. Almost para-Hermitian geometry is a topic with many analogies with the almost Hermitian geometry and also with differences \cite{IZ}. An almost para-Hermitian manifold $(M, K, g)$ is called para-K\"{a}hler, if its its Levi-Civita connection $\nabla$ satisfies $\nabla K=0$.

Recently, many researchers have studied some geometric concepts over Lie groups and Lie algebras such as complex, para-complex  and contact structures \cite{A, ABD, ABD1, CF, DV}. Inspired by these papers, this idea appeared in our mind to introduce some geometric concepts on hom-Lie algebras. 

The structure of this paper is organized as follows: In Section 2, we present the definition of hom-algebra, hom-left-symmetric algebra, hom-Lie algebra. Then we introduce representations for hom-Lie algebras and we study some properties of them. Finally, we introduce symplectic hom-Lie algebra and pseudo-Riemannian hom-algebra and we prove the existence of a unique product on this algebra, namely hom-Levi-Civita product. In Section 3, we introduce para-complex, para-Hermitian and para-K\"{a}hler structures on hom-Lie algebras. Also, we present an example of these structures. Then we study some properties of para-K\"{a}hler hom-Lie algebras. In Section 4, we define the phase space of a hom-Lie algebra and then we show that there exists a phase space on a para-K\"{a}hler home Lie algebra. Also, we prove that using phase spaces we can construct para-K\"{a}hler home Lie algebras.





\section{Representation and pseudo-Riemannian metric on hom-Lie algebras}
In this section, we present symplectic hom-Lie algebra, pseudo-Riemannian hom-algebra and hom-Levi-Civita product.
\begin{definition}
A hom-algebra is a triple $(V,\cdot , \phi)$  consisting of a linear space $V$, a bilinear map (product)
$\cdot : V \times V \rightarrow V$ and an algebra morphism $\phi : V \rightarrow V$.
\end{definition}
Let $(V, \cdot, {\phi})$ be a hom-algebra. Then for any $u \in V$, we consider maps $L_u, R_u : V \rightarrow V$ as the left and the right
multiplication by u given by $L_u(v) = u\cdot v$ and $ R_u(v) = v\cdot u$, respectively. The ‎‎\textit{commutator}
 on $V$ is given by $[u,v]=u\cdot v-v\cdot u$.
 \begin{definition}
  Let $(V, \cdot, \phi)$ be a hom-Lie algebra and $[\cdot,\cdot]$ be its commutator. We define the tensor curvature $\mathcal{K}$ of $V$ as follows
 \begin{equation}\label{SS8}
 \mathcal{K}(u,v):=L_{\phi(u)}\circ L_v-L_{ \phi(v)}\circ L_u-L_{[u,v]}\circ {\phi},
 \end{equation}
  for any $u, v\in V$.
 \end{definition}
 \begin{lemma}\label{IMPORT}
 We have 
\begin{equation}\label{SS6}
\circlearrowleft_{u,v,w}[{ \phi}(u),[v,w]]=\circlearrowleft_{u,v,w} \mathcal{K}(u,v)w,
\end{equation}
where $\circlearrowleft_{u,v,w}$ denotes cyclic sum on $u,v,w$. This equation is called hom-Bianchi identity.
 \end{lemma}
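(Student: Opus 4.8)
The plan is to prove the identity by a direct expansion of both sides; in fact it is a purely formal consequence of the definitions of $\mathcal{K}$, $L$ and the commutator, and it uses neither the hom-Jacobi identity nor the fact that $\phi$ is an algebra morphism. First I would rewrite the right-hand cyclic sum using $L_u(v)=u\cdot v$ and the definition \eqref{SS8}:
\[
\circlearrowleft_{u,v,w}\mathcal{K}(u,v)w=\circlearrowleft_{u,v,w}\Big(\phi(u)\cdot(v\cdot w)-\phi(v)\cdot(u\cdot w)-[u,v]\cdot\phi(w)\Big).
\]

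The key observation is that under a cyclic sum one is free to relabel the summation variables, which yields the two reindexing identities
\[
\circlearrowleft_{u,v,w}\phi(v)\cdot(u\cdot w)=\circlearrowleft_{u,v,w}\phi(u)\cdot(w\cdot v),\qquad
\circlearrowleft_{u,v,w}[u,v]\cdot\phi(w)=\circlearrowleft_{u,v,w}[v,w]\cdot\phi(u).
\]
Substituting these back, the first two terms collapse to $\circlearrowleft_{u,v,w}\phi(u)\cdot(v\cdot w-w\cdot v)=\circlearrowleft_{u,v,w}\phi(u)\cdot[v,w]$, while the third term becomes $-\circlearrowleft_{u,v,w}[v,w]\cdot\phi(u)$. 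Adding them gives $\circlearrowleft_{u,v,w}\big(\phi(u)\cdot[v,w]-[v,w]\cdot\phi(u)\big)=\circlearrowleft_{u,v,w}[\phi(u),[v,w]]$, which is exactly the left-hand side of \eqref{SS6}.

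The only real obstacle here is the bookkeeping of signs and of which variable $\phi$ is applied to across the twelve monomials appearing on each side; the reindexing identities above are precisely what make the two collections of monomials match. An alternative, if one prefers to avoid the relabelling step, is to expand both $\mathcal{K}(u,v)w$ and $[\phi(u),[v,w]]$ fully into monomials in the product $\cdot$ and $\phi$, list the twelve signed terms on each side, and check term by term that they coincide; this is routine but slightly more tedious. Either way no structural hypothesis on $(V,\cdot,\phi)$ beyond bilinearity of $\cdot$ and linearity of $\phi$ is needed, so the lemma holds at the level of hom-algebras.
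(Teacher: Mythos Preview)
Your proof is correct and follows essentially the same approach as the paper: a direct expansion using only the definitions of $\mathcal{K}$, $L_u$, and the commutator. The paper expands the left-hand side into nine terms and regroups them as $\mathcal{K}(u,v)w+\mathcal{K}(v,w)u+\mathcal{K}(w,u)v$, whereas you start from the right-hand side and use cyclic relabelling to collapse it to the left-hand side; these are the same computation read in opposite directions, and your observation that nothing beyond bilinearity of $\cdot$ and linearity of $\phi$ is used is accurate.
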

 \begin{proof}
If we consider $u,v,w\in V$, then we obtain
\begin{align*}
&\circlearrowleft_{u,v,w}[{ \phi‎‎_\mathfrak{g}}(u),[v,w]]=[{ \phi‎‎_\mathfrak{g}}(u),[v,w]]+[{ \phi‎‎_\mathfrak{g}}(v),[w,u]]+[{ \phi‎‎_\mathfrak{g}}(w),[u,v]]\\
=&L_{{ \phi‎‎_\mathfrak{g}}(u)}[v,w]-L_{[v,w]}{ \phi‎‎_\mathfrak{g}}(u)+L_{{ \phi‎‎_\mathfrak{g}}(v)}[w,u]-L_{[w,u]}{ \phi‎‎_\mathfrak{g}}(v)+L_{{ \phi‎‎_\mathfrak{g}}(w)}[u,v]-L_{[u,v]}{ \phi‎‎_\mathfrak{g}}(w)\\
=&L_{{ \phi‎‎_\mathfrak{g}}(u)}L_vw-L_{{ \phi‎‎_\mathfrak{g}}(u)}L_wv-L_{[v,w]}{ \phi‎‎_\mathfrak{g}}(u)+L_{{ \phi‎‎_\mathfrak{g}}(v)}L_wu-L_{{ \phi‎‎_\mathfrak{g}}(v)}L_uw-L_{[w,u]}{ \phi‎‎_\mathfrak{g}}(v)\\
&+L_{{ \phi‎‎_\mathfrak{g}}(w)}L_uv-L_{{ \phi‎‎_\mathfrak{g}}(w)}L_vu-L_{[u,v]}{ \phi‎‎_\mathfrak{g}}(w)
=\mathcal{K}(u,v)w+\mathcal{K}(v,w)u+\mathcal{K}(w,u)v.
\end{align*}
\end{proof}
\begin{definition}
The hom-algebra $(V,\cdot, \phi)$ is said to be a hom-associative algebra if for any $u,v,w\in V$ we have
\[
\phi(u)\cdot(v\cdot w)=(u\cdot v)\cdot\phi(w).
\]
\end{definition}
\begin{definition}
A hom-left-symmetric algebra is a hom-algebra $(V, \cdot,  \phi )$ such that
\begin{equation*}
ass_\phi(u,v,w)=ass_\phi(v,u,w),
\end{equation*}
where
\begin{align*}
ass_\phi(u,v,w)=(u\cdot v)\cdot\phi(w)-\phi(u)\cdot(v\cdot w),
\end{align*}
for any $u,v,w\in V$.
\end{definition}
\begin{remark}
Each hom-associative algebra is a hom-left-symmetric algebra with $ass_\phi(u,v,w)=0$, but the converse is not true. 
\end{remark}
\begin{definition}\cite{SB}
A hom-Lie algebra is a triple $(\mathfrak{g}, [\cdot  , \cdot ],{ \phi‎‎_\mathfrak{g}})$ consisting of a linear space $\mathfrak{g}$, a
 bilinear map (bracket) $[\cdot , \cdot ]: \mathfrak{g}\times\mathfrak{g}\rightarrow\mathfrak{g}$ and an algebra morphism ${ \phi‎‎_\mathfrak{g}}:\mathfrak{g}\rightarrow \mathfrak{g}$ satisfying
\begin{align*}
[u,v]=-[v,u],
\end{align*}
\begin{equation}\label{AM502}
\circlearrowleft_{u,v,w}[\phi(u),[v,w]]=0,
\end{equation}
for any $u,v,w\in\mathfrak{g}$. The second equation is called hom-Jacobi identity. The hom-Lie algebra $(\mathfrak{g}, [\cdot  ,\cdot ], { \phi_\mathfrak{g}})$ is called regular (involutive), if ${ \phi_\mathfrak{g}}$ is non-degenerate (satisfies ${ \phi_\mathfrak{g}}^2=1$).
\end{definition}
\begin{definition}
 A hom-algebra $(V, \cdot, \phi)$ is called
hom-Lie-admissible algebra
if its commutator bracket 
satisfies the hom-Jacobi identity.
\end{definition}
Lemma \ref{IMPORT} and the above definition, imply the following
\begin{corollary}
For a hom-Lie-admissible algebra we have $\circlearrowleft_{u,v,w} \mathcal{K}(u,v)w=0$.
\end{corollary}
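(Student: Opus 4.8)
The plan is to read the statement off directly from the hom-Bianchi identity of Lemma~\ref{IMPORT} together with the defining property of a hom-Lie-admissible algebra, so the ``proof'' is really just a matter of lining up the two formulas. Recall that, by definition, a hom-Lie-admissible algebra $(V,\cdot,\phi)$ is a hom-algebra whose commutator bracket $[u,v]=u\cdot v-v\cdot u$ satisfies the hom-Jacobi identity \eqref{AM502}; that is, $\circlearrowleft_{u,v,w}[\phi(u),[v,w]]=0$ for all $u,v,w\in V$.

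First I would observe that Lemma~\ref{IMPORT} is purely formal: its proof only expands the commutator bracket and the left-multiplication operators $L_u$, and nowhere uses skew-symmetry or the hom-Jacobi identity of a genuine hom-Lie algebra. Hence the identity \eqref{SS6}, namely $\circlearrowleft_{u,v,w}[\phi(u),[v,w]]=\circlearrowleft_{u,v,w}\mathcal{K}(u,v)w$, is valid in the hom-algebra $(V,\cdot,\phi)$ underlying any hom-Lie-admissible algebra, where $\mathcal{K}$ is the tensor curvature \eqref{SS8} built from the $L_u$ and the commutator. Substituting the vanishing of the left-hand side from the hom-Lie-admissibility hypothesis then forces $\circlearrowleft_{u,v,w}\mathcal{K}(u,v)w=0$, which is exactly the claim.

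I do not expect any genuine obstacle; the only point requiring a word of care is bookkeeping — making sure that the bracket appearing inside $\mathcal{K}$ in \eqref{SS8} is the same commutator bracket used in the definition of hom-Lie-admissibility, which it is by construction. So the corollary follows immediately, as already indicated by the sentence ``Lemma~\ref{IMPORT} and the above definition, imply the following.''
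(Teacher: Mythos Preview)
Your proposal is correct and matches the paper's approach exactly: the paper itself offers no argument beyond the sentence ``Lemma~\ref{IMPORT} and the above definition, imply the following,'' and you have simply unpacked that sentence. Your remark that Lemma~\ref{IMPORT} is a purely formal identity valid in any hom-algebra (so that it applies to the underlying hom-algebra of a hom-Lie-admissible algebra) is the one point worth making explicit, and you have done so.
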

\begin{proposition}\label{3.9}
If $(V, \cdot, \phi)$ is a hom-Lie-admissible algebra, then $(V, [\cdot ,\cdot ], \phi)$ is a hom-Lie algebra, where $[\cdot ,\cdot ]$ is the commutator bracket. 
\end{proposition}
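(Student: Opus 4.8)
The plan is to verify the three defining conditions of a hom-Lie algebra for the triple $(V,[\cdot,\cdot],\phi)$ one at a time, using only the bilinearity of the product, the fact that $\phi$ is an algebra morphism of $(V,\cdot,\phi)$, and the hom-Lie-admissible hypothesis.

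First I would dispatch the skew-symmetry: since $[u,v]=u\cdot v-v\cdot u$, we get $[v,u]=v\cdot u-u\cdot v=-[u,v]$ directly from the definition of the commutator, with no further hypotheses needed. Next I would check that $\phi$ is still an algebra morphism for the new operation $[\cdot,\cdot]$. Because $\phi$ is linear and satisfies $\phi(u\cdot v)=\phi(u)\cdot\phi(v)$ by assumption, I compute
\[
\phi([u,v])=\phi(u\cdot v-v\cdot u)=\phi(u\cdot v)-\phi(v\cdot u)=\phi(u)\cdot\phi(v)-\phi(v)\cdot\phi(u)=[\phi(u),\phi(v)],
\]
so $\phi$ is a morphism of $(V,[\cdot,\cdot])$ as required.

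Finally, the hom-Jacobi identity $\circlearrowleft_{u,v,w}[\phi(u),[v,w]]=0$ is precisely the defining property of a hom-Lie-admissible algebra, namely that its commutator bracket satisfies the hom-Jacobi identity; so this step is immediate from the hypothesis. (Alternatively, if one prefers to exhibit it concretely, one can invoke Lemma~\ref{IMPORT}, which rewrites the cyclic sum as $\circlearrowleft_{u,v,w}\mathcal{K}(u,v)w$, together with the Corollary stating that this cyclic sum vanishes for a hom-Lie-admissible algebra.) Combining the three verified conditions shows $(V,[\cdot,\cdot],\phi)$ is a hom-Lie algebra.

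I do not expect any genuine obstacle here: the statement is essentially a bookkeeping exercise, and the only point requiring a line of computation is transporting the morphism property of $\phi$ from the product to the commutator, which is the display above.
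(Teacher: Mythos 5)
Your proposal is correct and follows essentially the same route as the paper: the morphism property of $\phi$ for the commutator is exactly the displayed computation in the paper's proof, and the hom-Jacobi identity is taken directly from the hom-Lie-admissible hypothesis (you also note skew-symmetry explicitly, which the paper leaves implicit). No gaps.
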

\begin{proof}
Since $\phi(u\cdot v)=\phi(u)\cdot\phi(v)$ for any $u,v\in V$, it follows that
\[
\phi[u, v]=\phi(u\cdot v-v\cdot u)=\phi(u)\cdot\phi(v)-\phi(v)\cdot\phi(u)=[\phi(u), \phi(v)], 
\]
i.e., $\phi$ is algebra homomorphism with respect to commutator bracket. As hom-Jacobi identity is hold, then $(V, [\cdot ,\cdot ], \phi)$ is a hom-Lie algebra.
\end{proof}
Easily we can infer the following
\begin{proposition}\label{3.10}
A hom-left-symmetric algebra is a hom-Lie-admissible algebra.
\end{proposition}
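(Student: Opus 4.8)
The plan is to unwind the definitions and show that the commutator bracket $[u,v]=u\cdot v-v\cdot u$ of a hom-left-symmetric algebra $(V,\cdot,\phi)$ satisfies the hom-Jacobi identity \eqref{AM502}, i.e.\ $\circlearrowleft_{u,v,w}[\phi(u),[v,w]]=0$. The natural route is to expand the cyclic sum $\circlearrowleft_{u,v,w}[\phi(u),[v,w]]$ entirely in terms of the product $\cdot$, collect the twelve resulting terms, and recognize that they organize into a cyclic sum of expressions of the form $ass_\phi(u,v,w)-ass_\phi(v,u,w)$, which vanish by the hom-left-symmetry axiom. To do this cleanly I would first record the identity $[\phi(u),[v,w]] = \phi(u)\cdot(v\cdot w) - \phi(u)\cdot(w\cdot v) - (v\cdot w)\cdot\phi(u) + (w\cdot v)\cdot\phi(u)$, using that $\phi$ is multiplicative so $\phi([v,w]) = \phi(v)\cdot\phi(w)-\phi(w)\cdot\phi(v)$ is \emph{not} directly what appears — rather $[v,w]$ itself sits inside, and $\phi$ is applied only to the first slot. (One should be careful here: the bracket is $[\phi(u),[v,w]]$, so $\phi$ hits $u$ but the second argument is the raw commutator $[v,w]$, not its image under $\phi$.)

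Next I would substitute the definition $ass_\phi(u,v,w)=(u\cdot v)\cdot\phi(w)-\phi(u)\cdot(v\cdot w)$ to rewrite each of the twelve monomials. The key bookkeeping observation is that each term $\phi(u)\cdot(v\cdot w)$ or $(v\cdot w)\cdot\phi(u)$ appearing in the expansion can be traded, modulo an associator, for a term with the parentheses regrouped; and the hom-left-symmetry relation $ass_\phi(u,v,w)=ass_\phi(v,u,w)$ lets me swap the first two entries of any associator at will. After the cyclic sum is taken, the terms that are not already manifestly associators pair up: for instance $\phi(u)\cdot(v\cdot w)$ and $\phi(u)\cdot(w\cdot v)$ combine with their cyclic images so that everything reduces to $\circlearrowleft_{u,v,w}\big(ass_\phi(v,w,u)-ass_\phi(w,v,u)\big)$ or a similar arrangement, which is identically zero term-by-term by the defining axiom. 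One must also check that $\phi$ is an algebra morphism for the commutator bracket, but this is exactly the computation already carried out in the proof of Proposition~\ref{3.9}, so I would simply invoke Proposition~\ref{3.9}: it suffices to verify that $(V,\cdot,\phi)$ is hom-Lie-admissible, i.e.\ that the hom-Jacobi identity holds for the commutator, and then Proposition~\ref{3.9} is not even needed for \emph{this} statement — the definition of hom-Lie-admissible is precisely "the commutator satisfies hom-Jacobi," so the content is purely the Jacobi computation.

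The main obstacle is purely organizational: it is the careful sign-and-parenthesization tracking across twelve terms, arranged so that the cyclic structure collapses them into differences of associators. A slick way to avoid brute force is to prove the intermediate identity
\[
\circlearrowleft_{u,v,w}[\phi(u),[v,w]] \;=\; \circlearrowleft_{u,v,w}\big(ass_\phi(u,v,w)-ass_\phi(v,u,w)\big),
\]
valid in \emph{any} hom-algebra (this is really just the hom-analogue of the classical fact that the Jacobiator of a left-symmetric algebra equals the cyclic sum of associator-differences); once this identity is established by a direct expansion, the hom-left-symmetric axiom makes the right-hand side vanish summand-by-summand, and we conclude that the commutator satisfies the hom-Jacobi identity, i.e.\ $(V,\cdot,\phi)$ is hom-Lie-admissible. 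I would present the computation in a single \texttt{align*} block tracking the expansion, then apply the axiom. (If desired, one can additionally cite Proposition~\ref{3.9} to upgrade this to the statement that $(V,[\cdot,\cdot],\phi)$ is an honest hom-Lie algebra, but that is a corollary, not part of what is asserted here.)
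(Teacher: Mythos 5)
Your argument is correct and is essentially the paper's route: the intermediate identity you propose is precisely the hom-Bianchi identity of Lemma \ref{IMPORT}, since $\mathcal{K}(u,v)w=ass_\phi(v,u,w)-ass_\phi(u,v,w)$, so rather than re-expanding the twelve terms you could simply cite that lemma and note that hom-left-symmetry makes $\mathcal{K}(u,v)w$ vanish termwise, giving the hom-Jacobi identity for the commutator. Your remarks about $\phi$ acting only on the first slot and about Proposition \ref{3.9} not being needed for admissibility itself are also accurate.
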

Let $(g, [\cdot  , \cdot ], {\phi_\mathfrak{g}})$ be a hom-Lie algebra. A \textit{representation} of $\mathfrak{g}$ is a triple $(V, A, \rho)$ where $V$ is a vector space, $A\in gl(V)$ and 
$\rho: \mathfrak{g}\rightarrow gl(V)$ is a linear map satisfying
\begin{equation}\label{SS4}
\left\{
\begin{array}{cc}
&\hspace{-4cm}\rho(\phi_{\mathfrak{g}}(u))\circ A=A\circ \rho(u),\\
&\hspace{-.2cm}\rho([u,v]_{\mathfrak{g}})\circ A=\rho(\phi_{\mathfrak{g}}(u))\circ \rho(v)-\rho(\phi_{\mathfrak{g}}(v))\circ \rho(u),
\end{array}
\right.
\end{equation}
for any $u,v \in \mathfrak{g}$. If we consider $V^*$ as the dual vector
space of $V$, then we can define a linear map $\widetilde{\rho}:\mathfrak{g}\rightarrow gl(V^*)$ by 
\[
\prec\widetilde{\rho}(u)(\alpha),v\succ=-\prec({\rho}(u))^t(\alpha),v\succ=-\prec\alpha,\rho(u)(v)\succ,
\]
for any $u\in \mathfrak{g}, v\in V,  \alpha\in {V^*}$, where $(\rho(u))^t$ is the dual endomorphism of $\rho(u)\in\End(V)$ and $\prec\widetilde{\rho}(u)(\alpha),v\succ$ is $\widetilde{\rho}(u)(\alpha)(v)$. The representation $(V,A, \rho)$ is called \textit{admissible} if $(V ^*,A^*,\widetilde{\rho})$ is also a representation of $\mathfrak{g}$. In \cite{SB}, it is shown that the representation $(V, A, {\rho})$ is admissible if and only if 
the following conditions  are satisfied  
\begin{equation}\label{AM200}
\left\{
\begin{array}{cc}
&\hspace{-3.7cm}A\circ \rho(\phi_{\mathfrak{g}}(u))=\rho(u)\circ A,\\
&A\circ\rho([u,v]_\mathfrak{g})=\rho(u)\circ \rho(\phi_{\mathfrak{g}}(v))-\rho(v)\circ\rho(\phi_{\mathfrak{g}}(u)).
\end{array}
\right.
\end{equation}
Let $(g, [\cdot  , \cdot ], {\phi_\mathfrak{g}})$ be an hom-Lie algebra and $ad : \mathfrak{g}\rightarrow End(\mathfrak{g})$ be an operator defined for any
$u,v\in \mathfrak{g}$ by $ad(u)(v) = [u, v]$.  We obtain
\[
ad(\phi_\mathfrak{g}(u))(\phi_\mathfrak{g}(v))=[\phi_\mathfrak{g}(u), \phi_\mathfrak{g}(v)]=\phi_\mathfrak{g}([u, v])=\phi_\mathfrak{g}(ad(u)(v)).
\]
Moreover, it is easy to see that  
\[
ad{[u,v]}\circ{ \phi_\mathfrak{g}}=ad({ \phi_\mathfrak{g}}(u))\circ ad(v)-ad({ \phi_\mathfrak{g}}(v))ad(u).
\]
Thus $(\mathfrak{g}, ad, { \phi_\mathfrak{g}})$ is a representation of $\mathfrak{g}$, which is called an \textit{adjoint representation of $\mathfrak{g}$} (see \cite{SB}, for more details).
\begin{lemma}
Let $(g, [\cdot  , \cdot ], {\phi_\mathfrak{g}})$ be an involutive hom-Lie algebra. Then the adjoint representation  $(\mathfrak{g}, ad, { \phi_\mathfrak{g}})$ is
admissible.
\end{lemma}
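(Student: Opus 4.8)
The plan is to verify the two admissibility identities \eqref{AM200} directly in the special case $V=\mathfrak{g}$, $A=\phi_\mathfrak{g}$, $\rho=ad$. The only inputs needed are: the two representation equations \eqref{SS4} for the adjoint representation, which were already established above, namely $ad(\phi_\mathfrak{g}(u))\circ\phi_\mathfrak{g}=\phi_\mathfrak{g}\circ ad(u)$ and $ad([u,v])\circ\phi_\mathfrak{g}=ad(\phi_\mathfrak{g}(u))\circ ad(v)-ad(\phi_\mathfrak{g}(v))\circ ad(u)$; the fact that $\phi_\mathfrak{g}$ is an algebra morphism, so $[\phi_\mathfrak{g}(u),\phi_\mathfrak{g}(v)]=\phi_\mathfrak{g}([u,v])$; and, crucially, involutivity $\phi_\mathfrak{g}^2=\mathrm{id}_\mathfrak{g}$.

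For the first equation of \eqref{AM200} I would substitute $u\mapsto\phi_\mathfrak{g}(u)$ in the first equation of \eqref{SS4}, obtaining $ad(\phi_\mathfrak{g}^2(u))\circ\phi_\mathfrak{g}=\phi_\mathfrak{g}\circ ad(\phi_\mathfrak{g}(u))$; applying $\phi_\mathfrak{g}^2=\mathrm{id}_\mathfrak{g}$ on the left reduces this to $ad(u)\circ\phi_\mathfrak{g}=\phi_\mathfrak{g}\circ ad(\phi_\mathfrak{g}(u))$, which is exactly $A\circ\rho(\phi_\mathfrak{g}(u))=\rho(u)\circ A$.

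For the second equation of \eqref{AM200} the key step is to apply the second equation of \eqref{SS4} to the pair $(\phi_\mathfrak{g}(u),\phi_\mathfrak{g}(v))$ rather than to $(u,v)$. This yields
\[
ad([\phi_\mathfrak{g}(u),\phi_\mathfrak{g}(v)])\circ\phi_\mathfrak{g}=ad(\phi_\mathfrak{g}^2(u))\circ ad(\phi_\mathfrak{g}(v))-ad(\phi_\mathfrak{g}^2(v))\circ ad(\phi_\mathfrak{g}(u)).
\]
On the right-hand side involutivity collapses $\phi_\mathfrak{g}^2$ to the identity, giving precisely the right-hand side of the desired identity. On the left-hand side the morphism property gives $[\phi_\mathfrak{g}(u),\phi_\mathfrak{g}(v)]=\phi_\mathfrak{g}([u,v])$, and then the first equation of \eqref{SS4} with $u$ replaced by $[u,v]$ rewrites $ad(\phi_\mathfrak{g}([u,v]))\circ\phi_\mathfrak{g}$ as $\phi_\mathfrak{g}\circ ad([u,v])$. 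Combining these gives $\phi_\mathfrak{g}\circ ad([u,v])=ad(u)\circ ad(\phi_\mathfrak{g}(v))-ad(v)\circ ad(\phi_\mathfrak{g}(u))$, which is the second equation of \eqref{AM200}.

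I do not expect a genuine obstacle here; the argument is a short chain of substitutions. The only point requiring care is the bookkeeping — making sure involutivity is invoked at both places where a $\phi_\mathfrak{g}^2$ arises (once inside an $ad$, once as an outer composition) — together with recalling that, by the characterization quoted before \eqref{AM200}, verifying those two identities is exactly what it means for $(\mathfrak{g},ad,\phi_\mathfrak{g})$ to be admissible, so one need not argue separately that $(\mathfrak{g}^*,\phi_\mathfrak{g}^*,\widetilde{ad})$ is a representation.
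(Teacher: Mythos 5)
Your argument is correct, but it follows a different route than the paper's own proof. You verify the general admissibility criterion (\ref{AM200}) directly: substituting $\phi_\mathfrak{g}(u)$ into the first equation of (\ref{SS4}) and using $\phi^2_\mathfrak{g}=Id_\mathfrak{g}$ gives the first identity, and applying the second equation of (\ref{SS4}) to the pair $(\phi_\mathfrak{g}(u),\phi_\mathfrak{g}(v))$, collapsing $\phi^2_\mathfrak{g}$ inside the $ad$'s, rewriting $[\phi_\mathfrak{g}(u),\phi_\mathfrak{g}(v)]=\phi_\mathfrak{g}([u,v])$ by the morphism property, and then using the first equation of (\ref{SS4}) with $[u,v]$ in place of $u$ gives the second; this chain of substitutions is sound, and you are right that the characterization quoted before (\ref{AM200}) makes a separate check that $(\mathfrak{g}^*,\phi^*_\mathfrak{g},\widetilde{ad})$ is a representation unnecessary. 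The paper instead invokes a second, $ad$-specific criterion from \cite{SB}, namely that $(\mathfrak{g}, ad, \phi_\mathfrak{g})$ is admissible if and only if $[(Id_\mathfrak{g}-\phi^2_\mathfrak{g})(u), \phi_\mathfrak{g}(v)]=0$ and $[(Id_\mathfrak{g}-\phi^2_\mathfrak{g})(u), [\phi_\mathfrak{g}(v), w]]=[(Id_\mathfrak{g}-\phi^2_\mathfrak{g})(v), [\phi_\mathfrak{g}(u), w]]$, which hold trivially once $\phi^2_\mathfrak{g}=Id_\mathfrak{g}$. The paper's proof is shorter but leans on an external result not otherwise stated in the text, while yours is self-contained modulo the general criterion (\ref{AM200}) already quoted in the preliminaries; in effect your computation reproves the \cite{SB} criterion in the involutive case, at the mild cost of a bit of bookkeeping that you carry out correctly.
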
 
\begin{proof}
In \cite{SB}, it is shown that $(\mathfrak{g}, ad, { \phi_\mathfrak{g}})$ is admissible if and only if 
\begin{align*}
[(Id_\mathfrak{g}-\phi^2_\mathfrak{g})(u), \phi_\mathfrak{g}(v)]&=0,\\
[(Id_\mathfrak{g}-\phi^2_\mathfrak{g})(u), [\phi_\mathfrak{g}(v), w]]&=[(Id_\mathfrak{g}-\phi^2_\mathfrak{g})(v), [\phi_\mathfrak{g}(u), w]].
\end{align*}
Since $\phi^2_\mathfrak{g}=id_\mathfrak{g}$, the above equations are hold and so $(\mathfrak{g}, ad, { \phi_\mathfrak{g}})$ is
admissible.
\end{proof}
\begin{proposition}
Let $(\mathfrak{g}, [\cdot  ,\cdot ]_{\mathfrak{g}},{ \phi_\mathfrak{g}})$ be a hom-Lie algebra, $\mathfrak{g}^*$ be its dual space and $(\mathfrak{g}, A, \rho)$ be an admissible representation of $\mathfrak{g}$. Then on the direct sum $\mathfrak{g}\oplus\mathfrak{g}^*$, there is a hom-Lie algebra structure given as follows 
\begin{align*}
[u+\alpha,v+\beta]&:=[u,v]_{\mathfrak{g}}+\widetilde{\rho}(u)(\beta)-\widetilde{\rho}(v)(\alpha),\\
\Phi(u+\alpha)&:=\phi_{\mathfrak{g}}(u)+A^*(\alpha),
\end{align*}
for any $u,v\in \mathfrak{g}$ and $\alpha,\beta\in\mathfrak{g}^*$.
\end{proposition}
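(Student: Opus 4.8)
The plan is to verify directly the three defining properties of a hom-Lie algebra for $(\mathfrak{g}\oplus\mathfrak{g}^*,[\cdot,\cdot],\Phi)$: antisymmetry of the bracket, the fact that $\Phi$ is an algebra morphism, and the hom-Jacobi identity. Antisymmetry is immediate, since $[v+\beta,u+\alpha]=[v,u]_{\mathfrak{g}}+\widetilde{\rho}(v)(\alpha)-\widetilde{\rho}(u)(\beta)=-[u+\alpha,v+\beta]$ by the antisymmetry of $[\cdot,\cdot]_{\mathfrak{g}}$ and the linearity of $\widetilde{\rho}$.

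For the morphism property I would expand both sides of $\Phi[u+\alpha,v+\beta]=[\Phi(u+\alpha),\Phi(v+\beta)]$ from the definitions. Comparing the $\mathfrak{g}$-components reduces to $\phi_{\mathfrak{g}}([u,v]_{\mathfrak{g}})=[\phi_{\mathfrak{g}}(u),\phi_{\mathfrak{g}}(v)]_{\mathfrak{g}}$, which holds because $\phi_{\mathfrak{g}}$ is an algebra morphism of $(\mathfrak{g},[\cdot,\cdot]_{\mathfrak{g}})$. Comparing the $\mathfrak{g}^*$-components reduces to the single identity $A^*\circ\widetilde{\rho}(u)=\widetilde{\rho}(\phi_{\mathfrak{g}}(u))\circ A^*$, which is precisely the first relation of \eqref{SS4} applied to the triple $(\mathfrak{g}^*,A^*,\widetilde{\rho})$; this triple is a representation of $\mathfrak{g}$ exactly because $(\mathfrak{g},A,\rho)$ is admissible (equivalently, it follows by dualizing the first relation of \eqref{AM200}).

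The hom-Jacobi identity is the heart of the argument. Setting $x=u+\alpha$, $y=v+\beta$, $z=w+\gamma$, I would expand $[\Phi(x),[y,z]]$: its $\mathfrak{g}$-part is $[\phi_{\mathfrak{g}}(u),[v,w]_{\mathfrak{g}}]_{\mathfrak{g}}$ and its $\mathfrak{g}^*$-part is $\widetilde{\rho}(\phi_{\mathfrak{g}}(u))\bigl(\widetilde{\rho}(v)(\gamma)-\widetilde{\rho}(w)(\beta)\bigr)-\widetilde{\rho}([v,w]_{\mathfrak{g}})(A^*(\alpha))$. Taking the cyclic sum $\circlearrowleft_{x,y,z}$, the $\mathfrak{g}$-component vanishes by the hom-Jacobi identity \eqref{AM502} in $\mathfrak{g}$. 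For the $\mathfrak{g}^*$-component I would regroup the nine resulting terms according to which of $\alpha,\beta,\gamma$ they involve; the coefficient of $\gamma$ is $\widetilde{\rho}(\phi_{\mathfrak{g}}(u))\circ\widetilde{\rho}(v)-\widetilde{\rho}(\phi_{\mathfrak{g}}(v))\circ\widetilde{\rho}(u)-\widetilde{\rho}([u,v]_{\mathfrak{g}})\circ A^*$, which is zero by the second relation of \eqref{SS4} for $(\mathfrak{g}^*,A^*,\widetilde{\rho})$, and the coefficients of $\alpha$ and $\beta$ vanish in the same way by the cyclic symmetry. Hence $\circlearrowleft_{x,y,z}[\Phi(x),[y,z]]=0$, and $(\mathfrak{g}\oplus\mathfrak{g}^*,[\cdot,\cdot],\Phi)$ is a hom-Lie algebra. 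I expect the only real obstacle to be the bookkeeping in this last step: correctly tracking the signs and the arguments of $\widetilde{\rho}$ as one cycles $u\to v\to w\to u$ together with $\alpha\to\beta\to\gamma\to\alpha$, so that each collected coefficient is recognizably one instance of the dual-representation relation.
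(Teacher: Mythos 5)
Your proposal is correct and follows essentially the same route as the paper: antisymmetry is immediate, and the hom-Jacobi identity is verified by expanding the cyclic sum and using that $(\mathfrak{g}^*,A^*,\widetilde{\rho})$ is a representation of $\mathfrak{g}$ (the second relation of the dual version of \eqref{SS4}), which holds precisely because $(\mathfrak{g},A,\rho)$ is admissible. Your extra verification that $\Phi$ is an algebra morphism for the new bracket, reducing to $A^*\circ\widetilde{\rho}(u)=\widetilde{\rho}(\phi_{\mathfrak{g}}(u))\circ A^*$, is correct and fills in a check the paper's proof leaves implicit.
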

\begin{proof}
 Let $u,v\in \mathfrak{g}, \alpha,\beta\in\mathfrak{g}^*$. Obviously, we have 
\[
[u+\alpha,v+\beta]=-[v+\beta,u+\alpha].
\]
The hom-Jacobi identity is satisfied, because
\begin{align*}
&\circlearrowleft‎‎‎_{(u,\alpha),(v,\beta),(w,\gamma)}[\Phi(u+\alpha),[v+\beta,w+\gamma]]=\circlearrowleft‎‎‎_{(u,\alpha),(v,\beta),(w,\gamma)}[\phi_{\mathfrak{g}}(u)+A^*(\alpha),[v,w]_{\mathfrak{g}}+\widetilde{\rho}(v)(\gamma)-\widetilde{\rho}(w)(\beta)]\\
&=\circlearrowleft‎‎‎_{(u,\alpha),(v,\beta),(w,\gamma)}\big([\phi_{\mathfrak{g}}(u),[v,w]_{\mathfrak{g}}]_{\mathfrak{g}}+\widetilde{\rho}(\phi_{\mathfrak{g}}(u))(\widetilde{\rho}(v)(\gamma)-\widetilde{\rho}(w)(\beta))-\widetilde{\rho}([v,w]_{\mathfrak{g}})(A^*(\alpha))\big).
\end{align*}
On the other hand, $(\mathfrak{g}^*, A^*, \widetilde{\rho})$ is a representation of $\mathfrak{g}$. Therefore we have 
\[
\widetilde{\rho}([v,w]_{\mathfrak{g}})\circ A^*=\widetilde{\rho}(\phi_{\mathfrak{g}}(v))\circ \widetilde{\rho}(w)-\widetilde{\rho}(\phi_{\mathfrak{g}}(w))\circ \widetilde{\rho}(v).
\]
Two above equations imply 
\begin{align*}
&\circlearrowleft‎‎‎_{(u,\alpha),(v,\beta),(w,\gamma)}[\Phi(u+\alpha),[v+\beta,w+\gamma]]
=\circlearrowleft‎‎‎_{(u,\alpha),(v,\beta),(w,\gamma)}[\phi_{\mathfrak{g}}(u),[v,w]_{\mathfrak{g}}]_{\mathfrak{g}}\\
&+\widetilde{\rho}(\phi_{\mathfrak{g}}(u))(\widetilde{\rho}(v)(\gamma)-\widetilde{\rho}(w)(\beta))+\widetilde{\rho}(\phi_{\mathfrak{g}}(v))(\widetilde{\rho}(w)(\alpha)-\widetilde{\rho}(u)(\gamma))+\widetilde{\rho}(\phi_{\mathfrak{g}}(w))(\widetilde{\rho}(u)(\beta)-\widetilde{\rho}(v)(\alpha))\\
&-\widetilde{\rho}(\phi_{\mathfrak{g}}(v))( \widetilde{\rho}(w)(\alpha))+\widetilde{\rho}(\phi_{\mathfrak{g}}(w))( \widetilde{\rho}(v)(\alpha))-\widetilde{\rho}(\phi_{\mathfrak{g}}(w))( \widetilde{\rho}(u)(\beta))+\widetilde{\rho}(\phi_{\mathfrak{g}}(u))( \widetilde{\rho}(w)(\beta))\\
&-\widetilde{\rho}(\phi_{\mathfrak{g}}(u))( \widetilde{\rho}(v)(\gamma))+\widetilde{\rho}(\phi_{\mathfrak{g}}(v))( \widetilde{\rho}(u)(\gamma))=0.
\end{align*}
\end{proof}
\begin{definition}
A symplectic hom-Lie algebra is a regular hom-Lie algebra $(\mathfrak{g}, [,],\phi_{g})$ endowed with a bilinear skew-symmetric
non-degenerate form $\omega$ which is $2$-hom-cocycle, i.e.
\begin{align*}
 \omega([u, v], { \phi_\mathfrak{g}}(w)) &+ \omega([w,u],{ \phi_\mathfrak{g}}( v)) + \omega([v,w],{ \phi_\mathfrak{g}}( u))=0,\\
&\omega(\phi(u),\phi(v))=\omega(u,v).
\end{align*}
In this case, $\omega$ is called a symplectic structure on $\mathfrak{g}$ and $(\mathfrak{g}, [\cdot, \cdot], \phi_\mathfrak{g}, \omega)$ is called symplectic hom-Lie algebra.
\end{definition}
\begin{example}\label{IMEX}
We consider a $4$-dimensional linear space $\mathfrak{g}$ with an arbitrary basis $\{e_1,e_2,e_3,e_4\}$. We define the bracket
and linear map $\phi$ on $\mathfrak{g}$ as follows 
\begin{align*}
[e_1,e_2]=&ae_1,\ \ \ [e_1,e_3]=be_4,\ \ \ \ [e_2,e_4]=a e_4,
\end{align*}
and
\[
\phi(e_1)=-e_1,\ \ \ \phi(e_2)=e_2,\ \ \ \phi(e_3)=-e_3, \ \ \phi(e_4)=e_4.
\]
The above bracket is not a Lie bracket on $\mathfrak{g}$ if $a\neq 0$ and $b\neq0$, because
\[
[e_1,[e_2,e_3]]+[e_2,[e_3,e_1]+[e_3,[e_1,e_2]]=[e_2,-be_4]+[e_3,ae_1]=-2abe_4.
\]
It is easy to see that 
\begin{align*}
[\phi(e_1),\phi(e_2)]=-ae_1=\phi([e_1, e_2]),\ \ [\phi(e_1),\phi(e_3)]=be_4=\phi([e_1, e_3]),\ \ [\phi(e_2),\phi(e_4)]=a e_4=\phi([e_2, e_4]),
\end{align*}
i.e., $\phi$ is the algebra morphism. Also, we can deduce 
\begin{equation*}
[\phi(e_i),[e_j,e_k]]+[\phi(e_j),[e_k,e_i]+[\phi(e_k),[e_i,e_j]]=0,\ \ \ i,j,k=1, 2, 3, 4.
\end{equation*}
Thus $(\mathfrak{g}, [\cdot,\cdot], \phi)$ is a hom-Lie algebra. Now we consider the bilinear skew-symmetric
non-degenerate form $\Omega$ as follows:
\begin{equation}
\begin{bmatrix}
0&0&A&0\\
0&0&0&\frac{a}{b}A\\
-A&0&0&0\\
0&-\frac{a}{b}A&0&0
\end{bmatrix}
.
\end{equation}
Then we get
\begin{align*}
\Omega(\phi(e_1),\phi(e_3))=&A=\Omega(e_1,e_3), \ \ \ \Omega(\phi(e_2),\phi(e_4))=\frac{a}{b}A=\Omega(e_2,e_4),\\
\Omega(\phi(e_1),\phi(e_2))=&0=\Omega(e_1,e_2),\ \ \ \ \  \Omega(\phi(e_1),\phi(e_4))=0=\Omega(e_1,e_4),\\
\Omega(\phi(e_2),\phi(e_3))=&0=\Omega(e_2,e_3),\ \ \ \ \  \Omega(\phi(e_3),\phi(e_4))=0=\Omega(e_3,e_4),
\end{align*}
and 
\begin{align*}
\Omega([e_i,e_j],\phi(e_k))+\Omega([e_j,e_k],\phi(e_i))+\Omega([e_k,e_i],\phi(e_j))=0,\ \ \ \ \  i,j,k=1, 2, 3, 4.
\end{align*}
The above relations show that $\Omega$ is $2$-hom-cocycle and so $(\mathfrak{g}, [\cdot,\cdot], \phi,\Omega)$ is a symplectic hom-Lie algebra.
\end{example}
\begin{theorem}
Let $(\mathfrak{g}, [\cdot, \cdot], \phi_{g}, \omega)$ be an involutive  symplectic hom-Lie algebra. Then there exists a
 hom-left-symmetric algebra structure $\bold{a}$ on $\mathfrak{g}$ satisfying
\begin{equation}\label{AM10}
\omega(\bold{a}(u, v), { \phi_\mathfrak{g}}(w)) = - \omega({ \phi_\mathfrak{g}}(v),[u, w] ),
\end{equation}
\[
\bold{a}(u, v) -\bold{a}(v, u) = [u, v],
\]
for any $u,v\in \mathfrak{g}$.
\end{theorem}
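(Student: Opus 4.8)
The plan is to simply \emph{define} the product $\mathbf{a}$ by the formula \eqref{AM10} and then verify the remaining requirements one by one. For fixed $u,v\in\mathfrak{g}$ the assignment $w\mapsto-\omega(\phi_\mathfrak{g}(v),[u,w])$ is linear, and since the hom-Lie algebra is regular, $\phi_\mathfrak{g}$ is bijective, so as $w$ runs over $\mathfrak{g}$ so does $\phi_\mathfrak{g}(w)$; non-degeneracy of $\omega$ then produces a unique vector $\mathbf{a}(u,v)$ with $\omega(\mathbf{a}(u,v),\phi_\mathfrak{g}(w))=-\omega(\phi_\mathfrak{g}(v),[u,w])$ for all $w$, and bilinearity of $\mathbf{a}$ is immediate from that of $[\cdot,\cdot]$, $\omega$ and linearity of $\phi_\mathfrak{g}$. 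Throughout I will use that involutivity gives $\phi_\mathfrak{g}^{2}=\mathrm{id}$, and that $\phi_\mathfrak{g}$ being an algebra morphism together with the $\phi$-invariance of $\omega$ yield the two handy identities $[\phi_\mathfrak{g}(u),w]=\phi_\mathfrak{g}([u,\phi_\mathfrak{g}(w)])$ and $\omega(\phi_\mathfrak{g}(x),\phi_\mathfrak{g}(y))=\omega(x,y)$.

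Next I will check the two ``easy'' conditions. For the skew relation, pairing $\mathbf{a}(u,v)-\mathbf{a}(v,u)$ with $\phi_\mathfrak{g}(w)$ and applying \eqref{AM10} gives $-\omega(\phi_\mathfrak{g}(v),[u,w])+\omega(\phi_\mathfrak{g}(u),[v,w])$, and the $2$-hom-cocycle identity for $\omega$ together with its skew-symmetry rewrites this as $\omega([u,v],\phi_\mathfrak{g}(w))$; since $\phi_\mathfrak{g}$ is onto and $\omega$ non-degenerate, $\mathbf{a}(u,v)-\mathbf{a}(v,u)=[u,v]$ follows. To see that $\phi_\mathfrak{g}$ is a morphism for $\mathbf{a}$, i.e. $\phi_\mathfrak{g}(\mathbf{a}(u,v))=\mathbf{a}(\phi_\mathfrak{g}(u),\phi_\mathfrak{g}(v))$, I pair both sides with $\phi_\mathfrak{g}(w)$: using the $\phi$-invariance of $\omega$, $\phi_\mathfrak{g}^{2}=\mathrm{id}$ and $[\phi_\mathfrak{g}(u),w]=\phi_\mathfrak{g}([u,\phi_\mathfrak{g}(w)])$, both sides collapse to $-\omega(\phi_\mathfrak{g}(v),[u,\phi_\mathfrak{g}(w)])$, hence they agree for every $w$. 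At this stage $(\mathfrak{g},\mathbf{a},\phi_\mathfrak{g})$ is a hom-algebra whose commutator is the given bracket.

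The substantial step is the hom-left-symmetric identity $ass_{\phi_\mathfrak{g}}(u,v,w)=ass_{\phi_\mathfrak{g}}(v,u,w)$. Using bilinearity of $\mathbf{a}$ and the relation $\mathbf{a}(u,v)-\mathbf{a}(v,u)=[u,v]$ just proved, the difference $ass_{\phi_\mathfrak{g}}(u,v,w)-ass_{\phi_\mathfrak{g}}(v,u,w)$ reduces to $\mathbf{a}([u,v],\phi_\mathfrak{g}(w))-\mathbf{a}(\phi_\mathfrak{g}(u),\mathbf{a}(v,w))+\mathbf{a}(\phi_\mathfrak{g}(v),\mathbf{a}(u,w))$, so it is enough to show
\[
\mathbf{a}([u,v],\phi_\mathfrak{g}(w))=\mathbf{a}(\phi_\mathfrak{g}(u),\mathbf{a}(v,w))-\mathbf{a}(\phi_\mathfrak{g}(v),\mathbf{a}(u,w)).
\]
I will pair this with $\phi_\mathfrak{g}(z)$ and evaluate each term with \eqref{AM10}, clearing the various $\phi_\mathfrak{g}$'s via $\phi_\mathfrak{g}(\mathbf{a}(x,y))=\mathbf{a}(\phi_\mathfrak{g}(x),\phi_\mathfrak{g}(y))$, $[\phi_\mathfrak{g}(u),z]=\phi_\mathfrak{g}([u,\phi_\mathfrak{g}(z)])$ and $\phi_\mathfrak{g}^{2}=\mathrm{id}$: the left side becomes $-\omega(w,[[u,v],z])$ and the right side becomes $\omega(w,[\phi_\mathfrak{g}(v),[u,\phi_\mathfrak{g}(z)]])-\omega(w,[\phi_\mathfrak{g}(u),[v,\phi_\mathfrak{g}(z)]])$. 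By non-degeneracy of $\omega$ the identity is therefore equivalent to
\[
[\phi_\mathfrak{g}(u),[v,\phi_\mathfrak{g}(z)]]-[\phi_\mathfrak{g}(v),[u,\phi_\mathfrak{g}(z)]]=[[u,v],z],
\]
and this is exactly the hom-Jacobi identity \eqref{AM502} applied to the triple $u,v,\phi_\mathfrak{g}(z)$, combined with $\phi_\mathfrak{g}^{2}=\mathrm{id}$ and the skew-symmetry of the bracket. I expect the only delicate point to be the bookkeeping of the numerous occurrences of $\phi_\mathfrak{g}$ — in particular transporting each inner bracket into the right form \emph{before} invoking \eqref{AM10}; once that is organized, the whole computation collapses onto the $2$-hom-cocycle condition and the hom-Jacobi identity.
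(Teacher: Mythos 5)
Your construction is the same as the paper's in substance: defining $\mathbf{a}(u,v)$ as the unique vector satisfying \eqref{AM10}, via non-degeneracy of $\omega$ and bijectivity of $\phi_\mathfrak{g}$, is exactly the paper's $\mathbf{a}(u,v)=\flat^{-1}ad^{*}_{\phi_\mathfrak{g}(u)}\flat(v)$ unwound through the musical isomorphism $\flat$. The genuine difference lies in what is actually verified: the paper checks only \eqref{AM10} and then delegates everything else to ``Proposition 5.10 in \cite{SC}'', whereas you carry out the remaining verifications in full and self-contained form --- the relation $\mathbf{a}(u,v)-\mathbf{a}(v,u)=[u,v]$ from the $2$-hom-cocycle condition, the multiplicativity $\phi_\mathfrak{g}(\mathbf{a}(u,v))=\mathbf{a}(\phi_\mathfrak{g}(u),\phi_\mathfrak{g}(v))$ (a requirement for a hom-algebra that the paper never addresses), and the hom-left-symmetric identity, which after pairing with $\phi_\mathfrak{g}(z)$ collapses, exactly as you say, onto the hom-Jacobi identity \eqref{AM502} at the triple $(u,v,\phi_\mathfrak{g}(z))$ combined with $\phi_\mathfrak{g}^{2}=\mathrm{id}$ and skew-symmetry. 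I checked your bookkeeping: the reduction of $ass_{\phi_\mathfrak{g}}(u,v,w)-ass_{\phi_\mathfrak{g}}(v,u,w)$ to $\mathbf{a}([u,v],\phi_\mathfrak{g}(w))-\mathbf{a}(\phi_\mathfrak{g}(u),\mathbf{a}(v,w))+\mathbf{a}(\phi_\mathfrak{g}(v),\mathbf{a}(u,w))$ is right, the pairings do evaluate to $-\omega(w,[[u,v],z])$ and $\omega(w,[\phi_\mathfrak{g}(v),[u,\phi_\mathfrak{g}(z)]])-\omega(w,[\phi_\mathfrak{g}(u),[v,\phi_\mathfrak{g}(z)]])$, and hom-Jacobi indeed yields $[\phi_\mathfrak{g}(u),[v,\phi_\mathfrak{g}(z)]]-[\phi_\mathfrak{g}(v),[u,\phi_\mathfrak{g}(z)]]=[[u,v],z]$, so the argument closes. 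What your route buys is a complete proof inside the paper's own framework that makes explicit which hypothesis enters where (the cocycle identity only for the commutator relation, hom-Jacobi only for left-symmetry, involutivity and the $\phi$-invariance of $\omega$ throughout); what the paper's formulation buys is the packaging through the admissible adjoint/coadjoint representation, which it reuses elsewhere, at the cost of leaving the main verification to an external reference.
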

\begin{proof}
Let $\omega \in\wedge^2 ‎‎\mathfrak{g}^*$. Then we consider $\flat : ‎‎\mathfrak{g} \rightarrow ‎‎\mathfrak{g}^*$ as the isomorphism
given by $\flat(v)=\omega(v,\cdot)$ for any $v\in \mathfrak{g}$, which gives  
 $\prec \flat(u), v‎\succ=\omega(u,v)$, for any $u\in ‎‎\mathfrak{g}$. 
Since $\flat$ is a bijective and the adjoint representation of $\mathfrak{g}$ is admissible, we can define 
$
\bold{a}(u, v)= \flat^{-1} ad_{ \phi‎‎_\mathfrak{g}(u)}^*\flat(v).
$
Using the above expressions we can write 
\begin{equation*}
\omega(\bold{a}(u, v),{ \phi_\mathfrak{g}}(w))= \prec ad_{ \phi_\mathfrak{g}(u)}^* \flat(v),{ \phi_\mathfrak{g}}(w)‎\succ=-\prec\flat(v),{ \phi_\mathfrak{g}}[u,w]‎\succ=-\omega({ \phi_\mathfrak{g}}(v),[u, w] ).
\end{equation*}
The rest of the proof is similar to Proposition 5.10 in \cite{SC}.
\end{proof}
\begin{definition}
Let $ (\mathfrak{g}, [\cdot, \cdot], { \phi_\mathfrak{g}})$ be a finite-dimensional hom-Lie algebra endowed
with a bilinear symmetric non-degenerate form $< , >$ such that for any $u,v\in \mathfrak{g}$ the following equation is satisfied
\begin{equation}\label{AM302}
	<\phi_\mathfrak{g}(u), \phi_\mathfrak{g}(v)>=<u, v>.
\end{equation}
Then, we say that $\mathfrak{g}$ admits a pseudo-Riemannian metric $<,>$ and $(\mathfrak{g}, [\cdot, \cdot], \phi_\mathfrak{g}, <,>)$  is called pseudo-Riemannian hom-Lie algebra.
\end{definition}
\begin{corollary}\label{AM350}
	Let $(\mathfrak{g}, [\cdot, \cdot], \phi_\mathfrak{g}, <,>)$ be a pseudo-Riemannian involutive hom-Lie algebra. Then for any $u,v\in\mathfrak{g}$ we have 
	\begin{equation}\label{EL1}
	<\phi_\mathfrak{g}(u), v>=<u, \phi_\mathfrak{g}(v)>.
	\end{equation}
\end{corollary}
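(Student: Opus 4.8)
The plan is to deduce \eqref{EL1} directly from the invariance identity \eqref{AM302} together with the involutivity hypothesis $\phi_\mathfrak{g}^2 = \mathrm{id}_\mathfrak{g}$. The key observation is that \eqref{AM302} holds for \emph{all} pairs of elements, so one is free to substitute $\phi_\mathfrak{g}(v)$ for the second slot.

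Concretely, I would fix $u,v\in\mathfrak{g}$ and apply \eqref{AM302} to the pair $(u,\phi_\mathfrak{g}(v))$, obtaining
\begin{equation*}
<\phi_\mathfrak{g}(u),\phi_\mathfrak{g}(\phi_\mathfrak{g}(v))> = <u,\phi_\mathfrak{g}(v)>.
\end{equation*}
Since the hom-Lie algebra is assumed involutive, $\phi_\mathfrak{g}(\phi_\mathfrak{g}(v)) = \phi_\mathfrak{g}^2(v) = v$, and the left-hand side collapses to $<\phi_\mathfrak{g}(u),v>$. Comparing the two sides yields exactly \eqref{EL1}.

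There is essentially no obstacle here: the argument is a one-line substitution followed by an application of $\phi_\mathfrak{g}^2 = \mathrm{id}_\mathfrak{g}$, and it uses only the bilinearity of $<,>$ (to make sense of plugging in $\phi_\mathfrak{g}(v)$) and the defining property \eqref{AM302}. One might optionally remark that symmetry of $<,>$ is not even needed for this particular identity, though it is part of the standing hypotheses on a pseudo-Riemannian hom-Lie algebra. Thus the corollary follows immediately.
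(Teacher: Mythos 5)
Your proposal is correct: substituting $(u,\phi_\mathfrak{g}(v))$ into \eqref{AM302} and using $\phi_\mathfrak{g}^2=\mathrm{id}_\mathfrak{g}$ is exactly the one-line argument the paper intends (it states the corollary without proof, as an immediate consequence of the definition). Nothing is missing, and your remark that symmetry of $<,>$ is not needed is accurate.
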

\begin{theorem}
Let $(\mathfrak{g}, [\cdot, \cdot], \phi_\mathfrak{g}, <,>)$ be a pseudo-Riemannian hom-Lie algebra such that $\phi_\mathfrak{g}$ is isomorphism. There exists the unique product $\cdot$ on it which satisfies
\begin{equation}\label{SL}
[u,v]=u\cdot v-v\cdot u,
\end{equation}
\begin{equation}\label{L2}
<u\cdot v,{ \phi_\mathfrak{g}}(w)>=-<{ \phi_\mathfrak{g}}(v),u\cdot w>.
\end{equation}
This product is called the hom-Levi-Civita.
\end{theorem}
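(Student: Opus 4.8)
The plan is to imitate the classical Koszul formula for the Levi-Civita connection: elements of $\mathfrak{g}$ play the role of ``constant'' vector fields, so the derivative terms disappear and one only has to insert $\phi_\mathfrak{g}$ in the right places to match \eqref{L2}. First I would introduce the trilinear map
\[
B(u,v,w):=\tfrac12\Big(<[u,v],\phi_\mathfrak{g}(w)>+<[w,v],\phi_\mathfrak{g}(u)>+<[w,u],\phi_\mathfrak{g}(v)>\Big),
\]
which is precisely the expression that \eqref{SL} and \eqref{L2} force $<u\cdot v,\phi_\mathfrak{g}(w)>$ to equal (this is checked in the uniqueness step below). Since $\phi_\mathfrak{g}$ is an isomorphism and $<,>$ is non-degenerate, the linear map $\flat_\phi:\mathfrak{g}\to\mathfrak{g}^*$ given by $\flat_\phi(x):=<x,\phi_\mathfrak{g}(\cdot)>$ is injective, hence bijective because $\mathfrak{g}$ is finite-dimensional. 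I can therefore define $u\cdot v:=\flat_\phi^{-1}\big(B(u,v,\cdot)\big)$, a bilinear product characterized by $<u\cdot v,\phi_\mathfrak{g}(w)>=B(u,v,w)$ for all $w\in\mathfrak{g}$.

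Next I would verify that this product satisfies \eqref{SL} and \eqref{L2}. For \eqref{L2}, using the symmetry of $<,>$ the identity is equivalent to $B(u,v,w)=-B(u,w,v)$, and this follows at once by interchanging $v$ and $w$ in the defining formula and applying the skew-symmetry of $[\cdot,\cdot]$. For \eqref{SL}, a short computation using skew-symmetry gives $<u\cdot v-v\cdot u,\phi_\mathfrak{g}(w)>=B(u,v,w)-B(v,u,w)=<[u,v],\phi_\mathfrak{g}(w)>$ for every $w$, and injectivity of $\flat_\phi$ then yields $u\cdot v-v\cdot u=[u,v]$.

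For uniqueness, let $\bullet$ be any product obeying \eqref{SL} and \eqref{L2}, and set $C(u,v,w):=<u\bullet v,\phi_\mathfrak{g}(w)>$. By symmetry of the metric, \eqref{L2} says $C(u,v,w)=-C(u,w,v)$, while \eqref{SL} says $C(u,v,w)-C(v,u,w)=<[u,v],\phi_\mathfrak{g}(w)>$. Writing the latter equation for the three cyclic permutations of $(u,v,w)$ and using the antisymmetry relation to rewrite each ``reversed'' term, the standard alternating combination (first minus second plus third) yields $2C(u,v,w)=<[u,v],\phi_\mathfrak{g}(w)>-<[v,w],\phi_\mathfrak{g}(u)>+<[w,u],\phi_\mathfrak{g}(v)>=2B(u,v,w)$. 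Hence $C=B$, and non-degeneracy (again through $\flat_\phi$) forces $u\bullet v=u\cdot v$.

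I do not expect a genuine obstacle; this is the familiar Koszul argument. The only points requiring care are the sign bookkeeping in the cyclic-permutation step and the observation that it is exactly the hypothesis that $\phi_\mathfrak{g}$ is an isomorphism which makes $\flat_\phi$ invertible, so that $u\cdot v$ is legitimately determined by its pairings with the $\phi_\mathfrak{g}(w)$. Note in particular that neither the hom-Jacobi identity nor the $\phi_\mathfrak{g}$-invariance \eqref{AM302} of $<,>$ enters the proof of this statement.
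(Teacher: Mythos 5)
Your proof is correct. The uniqueness half is exactly the paper's argument: you combine \eqref{L2} for the three cyclic permutations with \eqref{SL} and arrive at the same Koszul formula $2<u\cdot v,\phi_\mathfrak{g}(w)>=<[u,v],\phi_\mathfrak{g}(w)>+<[w,v],\phi_\mathfrak{g}(u)>+<[w,u],\phi_\mathfrak{g}(v)>$, and your sign bookkeeping checks out. Where you genuinely diverge is the existence half. The paper fixes a basis, writes $e_i\cdot e_j=\Gamma_{ij}^k e_k$, and solves for $\Gamma_{ij}^l$ by multiplying by the inverse of the metric matrix and the inverse of $\phi_\mathfrak{g}$, then simply asserts that the resulting product satisfies \eqref{SL} and \eqref{L2}. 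You instead define the product invariantly as $u\cdot v=\flat_\phi^{-1}\big(B(u,v,\cdot)\big)$, where $\flat_\phi(x)=<x,\phi_\mathfrak{g}(\cdot)>$ is bijective precisely because $<,>$ is non-degenerate, $\phi_\mathfrak{g}$ is an isomorphism and $\mathfrak{g}$ is finite-dimensional, and you then actually check both axioms: \eqref{L2} reduces to the antisymmetry $B(u,v,w)=-B(u,w,v)$ and \eqref{SL} to $B(u,v,w)-B(v,u,w)=<[u,v],\phi_\mathfrak{g}(w)>$, both immediate from skew-symmetry of the bracket. So your route is coordinate-free and supplies the verification that the paper compresses into ``it is easy to see''; the paper's route has the minor virtue of producing an explicit component formula for the structure coefficients of the product. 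Your closing remark is also accurate: neither the hom-Jacobi identity nor the $\phi_\mathfrak{g}$-invariance \eqref{AM302} of the metric is needed anywhere in this theorem, only skew-symmetry of the bracket, symmetry and non-degeneracy of $<,>$, and invertibility of $\phi_\mathfrak{g}$.
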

\begin{proof}
At first, we let that this product there exists and then we prove the uniqueness of it. Using (\ref{L2}) we deduce the following
\begin{align*}
&<u\cdot v,{ \phi_\mathfrak{g}}(w)>+<{ \phi_\mathfrak{g}}(v),u\cdot w>=0,\\
&<v\cdot w,{ \phi_\mathfrak{g}}(u)>+<{ \phi_\mathfrak{g}}(w), v\cdot u>=0,\\
&-<w\cdot u,{ \phi_\mathfrak{g}}(v)>-<{ \phi_\mathfrak{g}}(u), w\cdot v>=0.
\end{align*}
Adding the above three equations give
\[
<u\cdot v+v\cdot u, \phi_\mathfrak{g}(w)>=<w\cdot u-u\cdot w, \phi_\mathfrak{g}(v)>+<w\cdot v-v\cdot w, \phi_\mathfrak{g}(u)>.
\]
Applying (\ref{SL}) in the above equation, we get 
\begin{equation}\label{Koszul}
	2<u\cdot v,{ \phi_\mathfrak{g}}(w)>=<[u,v],{ \phi_\mathfrak{g}}(w)>+<[w,v],{ \phi_\mathfrak{g}}(u)>+<[w,u],{ \phi_\mathfrak{g}}(v)>.
\end{equation}
The above formula that we call it Koszul's formula, prove the uniqueness of $\cdot$. To prove the existence of it, we consider a basis $\{e_i\}_{i=1,\cdots, n}$ for $\mathfrak{g}$. Since $e_i\cdot e_j$, $[e_i, e_j]$ and $\phi_\mathfrak{g}(e_i)$ belong to $\mathfrak{g}$, then we can write 
\[
e_i\cdot e_j=\Gamma_{ij}^ke_k,\ \ \ e_i\cdot e_j=c_{ij}^ke_k,\ \ \  \phi_\mathfrak{g}(e_i)=\phi_i^ke_k,
\]
where $\Gamma_{ij}^k, c_{ij}^k\in\mathbb{F}$ and we use the Einstein's summation notation (this notation express that if an index appears both in the above and below, then it is summation index). Setting the above equations in (\ref{Koszul}) imply
\[
2\Gamma_{ij}^h\phi_h^r<e_k, e_r>=c_{ij}^r\phi^s_k<e_r, e_s>+c_{kj}^r\phi^s_i<e_r, e_s>+c_{ki}^r\phi^s_j<e_r, e_s>.
\]
Since $<,>$ is non-degenerated, $[<,>]$ is invertible. We denote the components of $[<,>]^{-1}$ by $<e^i, e^j>$. Also, we denote by $\widetilde{\phi}^i_j$ the component of $\phi_\mathfrak{g}^{-1}$ (note that $\phi_\mathfrak{g}$ is an isomorphism). Multiplying $<e^k, e^m>$ to the sides of the above equation gives
\[
\Gamma_{ij}^h\phi_h^m=\frac{1}{2}<e^k, e^m>\{c_{ij}^r\phi^s_k<e_r, e_s>+c_{kj}^r\phi^s_i<e_r, e_s>+c_{ki}^r\phi^s_j<e_r, e_s>\}.
\]
Multiplying $\widetilde{\phi}^l_m$ to the sides of the above equation yields
\[
\Gamma_{ij}^l=\frac{1}{2}<e^k, e^m>\widetilde{\phi}^l_m\{c_{ij}^r\phi^s_k<e_r, e_s>+c_{kj}^r\phi^s_i<e_r, e_s>+c_{ki}^r\phi^s_j<e_r, e_s>\}.
\]
This equation shows that $\cdot$ there exists and it is easy to see that it satisfies in (\ref{SL}) and (\ref{L2}).
\end{proof}
\section{ para-K\"{a}hler hom-Lie algebras}
In this section, we introduce para-complex, para-Hermitian and para-K\"{a}hler structures on hom-Lie algebras and we study some properties of para-K\"{a}hler hom-Lie algebras.
\begin{definition}
	An almost product structure on an involutive hom-Lie algebra $(\mathfrak{g}, [\cdot  , \cdot ], { \phi_\mathfrak{g}})$, is an isomorphism $ K : ‎‎\mathfrak{g} \rightarrow ‎‎\mathfrak{g}$ such that
	\[
	K^2 = Id_‎‎{\mathfrak{g}},\ \ \ \ { \phi_\mathfrak{g}}\circ K=K\circ{ \phi_\mathfrak{g}}.
	\] 
Also, $(\mathfrak{g}, [\cdot  , \cdot ], { \phi_\mathfrak{g}}, K)$ is called almost product hom-Lie algebra.
\end{definition}
	The above equations deduce $({ \phi_\mathfrak{g}}\circ K)^2=Id_‎‎{\mathfrak{g}}$. Thus, we can write $\mathfrak{g}$ as $\mathfrak{g}=\mathfrak{g}^1\oplus \mathfrak{g}^{-1}$, where
	\[
	\mathfrak{g}^1:=ker({{ \phi‎‎_\mathfrak{g}}\circ K-Id_\mathfrak{g}}),\ \ \mathfrak{g}^{-1}:=ker({ \phi_\mathfrak{g}}\circ K+Id_{\mathfrak{g}} ).
	\]
	If $\mathfrak{g}^1$ and $\mathfrak{g}^{-1}$ have the same dimension $n$, then $K$ is called an \textit{almost para-complex} structure on $(\mathfrak{g}, [\cdot, \cdot], \phi_\mathfrak{g})$ (in this case the dimensional of $\mathfrak{g}$ is even). The \textit{Nijenhuis torsion} of ${{ \phi_\mathfrak{g}}\circ K}$ is defined by
\begin{equation}\label{w1}
	4N_{{ \phi_\mathfrak{g}}\circ K}(u,v)=[({{ \phi_\mathfrak{g}}\circ K})u, ({{ \phi_\mathfrak{g}}\circ K})v]-{{ \phi_\mathfrak{g}}\circ K}[({{ \phi_\mathfrak{g}}\circ K})u, v]-{{ \phi_\mathfrak{g}}\circ K}[u,({{ \phi_\mathfrak{g}}\circ K})v]+[u,v],
\end{equation}
for all $u,v\in \mathfrak{g}$. An almost product (almost para-complex) structure is called \textit{product (para-complex)} if $N_{{ \phi_\mathfrak{g}}\circ K}=0$. In the following for simplicity, we set $N=N_{{ \phi_\mathfrak{g}}\circ K}$.
\begin{definition}\label{L100}
	An almost para-hermitian structure of a hom-Lie algebra $(\mathfrak{g}, [\cdot,\cdot],\phi_{\mathfrak{g}})$ is a pair $(K, <,>)$ consisting of an almost para-complex structure
	and a pseudo-Riemannian metric $<,>$, such that for each $u,v \in \mathfrak{g}$
	\begin{equation}\label{EL}
	<({ \phi_\mathfrak{g}}\circ K)u, ({ \phi_\mathfrak{g}}\circ K)v>=- <u , v>.
	\end{equation}
Also the pair $(K, <,>)$ is called para-Hermitian structure if $N=0$. In this case, $(\mathfrak{g}, [\cdot, \cdot], \phi_\mathfrak{g}, K, <,>)$ is called para-Hermitian hom-Lie algebra.
\end{definition}
\begin{definition}
A para-K\"{a}hler hom-Lie algebra is a pseudo-Riemannian hom-Lie algebra $(\mathfrak{g}, [\cdot, \cdot], \phi_{\mathfrak{g}})$ endowed with an almost product structure $K$, such that ${ \phi_\mathfrak{g}}\circ K$ is skew-symmetric with respect to $<,>$, and ${ \phi_\mathfrak{g}}\circ K$ is invariant with respect to the Levi-Civita product, i.e., $L_u\circ { \phi_\mathfrak{g}}\circ K={ \phi_\mathfrak{g}}\circ K\circ L_u$ for any $u\in \mathfrak{g}$.
\end{definition}
Note that condition $(u.({ \phi_\mathfrak{g}}\circ K))(v)=({ \phi_\mathfrak{g}}\circ K)(u.v)$ equivalent with 
\begin{equation}\label{AM1}
({ \phi_\mathfrak{g}}\circ K)(u).({ \phi_\mathfrak{g}}\circ K)(v)=({ \phi_\mathfrak{g}}\circ K)(({ \phi_\mathfrak{g}}\circ K)(u).v),
\end{equation}
and
\begin{equation}\label{AM2}
u.v=({ \phi_\mathfrak{g}}\circ K)(u.({ \phi_\mathfrak{g}}\circ K)(v)).
\end{equation}
\begin{example}
We consider the home-Lie algebra $(\mathfrak{g}, [\cdot,\cdot], \phi,\Omega)$ introduced in Example \ref{IMEX}. We define the metric $<,>$ of $\mathfrak{g}$ as follows
\[
\begin{bmatrix}
0&0&A&0\\
0&0&0&\frac{a}{b}A\\
A&0&0&0\\
0&\frac{a}{b}A&0&0
\end{bmatrix}
.
\]
It is easy to see that  $<\phi(e_i),e_j>=0=<e_i,\phi(e_j)>$, for all $i,j=1,2,3,4$, except 
\begin{align*}
<\phi(e_1),e_3>=&-A=<e_1,\phi(e_3)>,\ \ \ \ \ <\phi(e_2),e_4>=\frac{a}{b}A=<e_2,\phi(e_4)>.
\end{align*}
Thus (\ref{SL}) is hold and so $(\mathfrak{g}, [\cdot, \cdot], \phi_{\mathfrak{g}}, <,>)$ is a pseudo-Riemannian hom-Lie algebra. If isomorphism $K$ is determined as
\[
K(e_1)=-e_1,\ \ \ K(e_2)=e_2,\ \ \ K(e_3)=e_3, \ \ K(e_4)=- e_4,
\]
then we have 
\[
K^2(e_1)=K^2(e_2)=K^2(e_3)=K^2(e_4)=Id_\mathfrak{g}.
\]
Moreover, using the above equations deduce 
\[
\phi^2(e_1)=\phi^2(e_2)=\phi^2(e_3)=\phi^2(e_4)=Id_\mathfrak{g},
\]
and
\begin{align*}
(K\circ\phi)e_1=&e_1=(\phi\circ K)e_1,\ \ \ (K\circ\phi)e_2=e_2=(\phi\circ K)e_2,\\
(K\circ\phi)e_3=&-e_3=(\phi\circ K)e_3,\ \ \ (K\circ\phi)e_4=-e_4=(\phi\circ K)e_4.\\
\end{align*}
Thus $K$ is an almost product structure on $(g, [\cdot, \cdot], \phi_{\mathfrak{g}})$. Since $\{e_1, e_2\}\in\mathfrak{g}^1$ and $\{e_3, e_4\}\in\mathfrak{g}^{-1}$, we also deduce that $K$ is an almost para-complex structure on $(\mathfrak{g}, [\cdot, \cdot], \phi_{\mathfrak{g}})$. Moreover, we obtain 
\begin{align*}
N(e_i,e_j)=0,\ \ \ \ \forall i,j=1,2,3,4,
\end{align*}
i.e., $K$ is a para-complex structure on $(\mathfrak{g}, [\cdot,\cdot], \phi)$. It is easy to check that $<({ \phi‎‎_\mathfrak{g}}\circ K)(e_i), ({ \phi‎‎_\mathfrak{g}}\circ K)(e_j)>=-<e_i, e_j>$, for all $i, j=1,2,3,4$, and so $(\mathfrak{g}, [\cdot, \cdot], \phi_{\mathfrak{g}}, <,>, K)$ is a para-Hermitian hom-Lie algebra. Now, we study para-K\"{a}hlerian property for this hom-Lie algebra. At first, we must  obtain the Levi-Civita product for it. If we denote this product with $\cdot$, then we have $e_i\cdot e_j=\sum_{k=1}^{4}A_{ij}^ke_k$, for all $i, j=1, 2, 3, 4$. From Koszul's formula given by (\ref{Koszul}), we get $<e_1\cdot e_1, \phi(e_i)>=0$, for all $i=1, 2, 3, 4$, which give $e_1\cdot e_1=0$. Again (\ref{Koszul}) gives $<e_1\cdot e_2, \phi(e_i)>=0$, for all $i=1, 2, 4$, which imply $A_{12}^2=A_{12}^3=A_{12}^4=0$. But we obtain 
\[
-2<e_1\cdot e_2, \phi(e_3)>=-a<e_1, e_3>-b<e_2, e_4>=-2aA,
\]
which yields $A_{12}^1=a$. Therefore we deduce $e_1\cdot e_2=ae_1$. This relation together $[e_1, e_2]=e_1\cdot e_2-e_2\cdot e_1$ imply $e_2\cdot e_1=0$. In a similar way, we can obtain the following 
\begin{align*}
e_1\cdot e_3=be_4,\ \ e_2\cdot e_2=-ae_2,\ \  e_2\cdot e_4=ae_4,
\end{align*}  
and 
\begin{equation*}
e_3\cdot e_1=e_1\cdot e_4=e_4\cdot e_1=e_2\cdot e_3=e_3\cdot e_2=e_4\cdot e_2=e_3\cdot e_3=e_3\cdot e_4=e_4\cdot e_3=e_4\cdot e_4=0.
\end{equation*} 
Easily we can see that the Levi-Civita product computed in the above satisfies in (\ref{AM1}). Thus $(\mathfrak{g}, [\cdot,\cdot], \phi_{\mathfrak{g}}, K, <,>)$ is a para-K\"{a}hle hom-Lie algebra.
\end{example} 
\begin{theorem}\label{304}
Let $(\mathfrak{g},[\cdot, \cdot], \phi_{\mathfrak{g}}, <,>, K)$ be a para-K\"{a}hler hom-Lie algebra. Then 
 the following statements are verified
 
i)\  \ $(\mathfrak{g}, [\cdot, \cdot], { \phi_\mathfrak{g}}, \Omega)$ is a symplectic hom-Lie algebra, where 
\begin{equation}\label{*}
\Omega(u,v)=<({ \phi_\mathfrak{g}}\circ K)u,v>.
\end{equation}

ii)\  \ $\mathfrak{g}^1$ and $\mathfrak{g}^{-1}$ are subalgebras isotropic with respect to $< ,>$, and Lagrangian with
respect to $\Omega$,

iii) \ \ $(\mathfrak{g}, [\cdot, \cdot], { \phi‎‎_\mathfrak{g}}, <,>, K)$ is a para-Hermitian  hom-lie algebra,

iv)\  \ \ for any $u\in\mathfrak{g}$, $u.\mathfrak{g}^1\subset\mathfrak{g}^1 $ and $u.\mathfrak{g}^{-1}\subset\mathfrak{g}^{-1} $
(the dot is the Levi-Civita product),

v)\ \  for any $u\in\mathfrak{g}^1$, $\phi‎‎_\mathfrak{g}(u)\in\mathfrak{g}^1$ and for any $\bar{u}\in\mathfrak{g}^{-1}$, $\phi_\mathfrak{g}(\bar{u})\in\mathfrak{g}^{-1}$. 
\end{theorem}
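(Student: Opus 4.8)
The plan is to verify the five claims in order, since each one feeds the next. For (i) I would show $\Omega$ is skew-symmetric, non-degenerate, $\phi_{\mathfrak g}$-invariant, and a 2-hom-cocycle. Skew-symmetry follows because $\phi_{\mathfrak g}\circ K$ is skew-symmetric with respect to $<,>$; non-degeneracy follows from non-degeneracy of $<,>$ together with $\phi_{\mathfrak g}\circ K$ being an isomorphism; and invariance $\Omega(\phi_{\mathfrak g}(u),\phi_{\mathfrak g}(v))=\Omega(u,v)$ follows from \eqref{AM302}, the commutation $\phi_{\mathfrak g}\circ K=K\circ\phi_{\mathfrak g}$, and Corollary~\ref{AM350}. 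For the cocycle condition I would expand $\Omega([u,v],\phi_{\mathfrak g}(w))$ using \eqref{*}, rewrite each term via the defining relation \eqref{L2} of the Levi-Civita product (writing $[u,v]=u\cdot v-v\cdot u$), and use the invariance of $\phi_{\mathfrak g}\circ K$ under $L_u$ (equations \eqref{AM1}, \eqref{AM2}) to move $\phi_{\mathfrak g}\circ K$ across the products; the cyclic sum should then telescope to zero. This bookkeeping is the step I expect to be the main obstacle, because it requires juggling \eqref{L2}, the $L_u$-invariance, and the $\phi_{\mathfrak g}$-symmetry simultaneously.

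For (ii), note that $\mathfrak g^{1}$ and $\mathfrak g^{-1}$ are the $\pm1$-eigenspaces of $\phi_{\mathfrak g}\circ K$. If $u,v\in\mathfrak g^{1}$ then, using \eqref{EL}-type reasoning, $<u,v>=<(\phi_{\mathfrak g}\circ K)u,(\phi_{\mathfrak g}\circ K)v>$; but applying the almost-para-Hermitian identity \eqref{EL} once more gives $<(\phi_{\mathfrak g}\circ K)u,(\phi_{\mathfrak g}\circ K)v>=-<u,v>$, hence $<u,v>=0$, so $\mathfrak g^{1}$ is isotropic; the same argument works for $\mathfrak g^{-1}$. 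Since each has dimension $n=\tfrac12\dim\mathfrak g$, they are maximal isotropic, i.e.\ Lagrangian; and $\Omega(u,v)=<(\phi_{\mathfrak g}\circ K)u,v>=\pm<u,v>=0$ on each eigenspace shows they are Lagrangian for $\Omega$ as well. That $\mathfrak g^{\pm1}$ are subalgebras will follow from (iv): if $u,v\in\mathfrak g^{1}$ then $u\cdot v\in\mathfrak g^{1}$ and $v\cdot u\in\mathfrak g^{1}$, hence $[u,v]=u\cdot v-v\cdot u\in\mathfrak g^{1}$.

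For (iii) it suffices to observe that a para-K\"ahler structure is in particular an almost para-Hermitian structure whose associated tensor is parallel; the vanishing of the Nijenhuis torsion $N=N_{\phi_{\mathfrak g}\circ K}$ then follows from the $L_u$-invariance of $\phi_{\mathfrak g}\circ K$ by substituting $[u,v]=u\cdot v-v\cdot u$ into \eqref{w1} and repeatedly using \eqref{AM1}; the four terms cancel in pairs. For (iv), write $J=\phi_{\mathfrak g}\circ K$; the invariance $L_u\circ J=J\circ L_u$ says that if $v$ lies in the $\pm1$-eigenspace of $J$ then $u\cdot v=L_u(v)$ satisfies $J(u\cdot v)=L_u(Jv)=\pm\, u\cdot v$, so $u\cdot v\in\mathfrak g^{\pm1}$; this is immediate and poses no difficulty. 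Finally for (v), if $u\in\mathfrak g^{1}$, i.e.\ $(\phi_{\mathfrak g}\circ K)u=u$, then applying $\phi_{\mathfrak g}$ and using $\phi_{\mathfrak g}\circ K=K\circ\phi_{\mathfrak g}$ gives $(\phi_{\mathfrak g}\circ K)(\phi_{\mathfrak g}(u))=\phi_{\mathfrak g}((\phi_{\mathfrak g}\circ K)u)=\phi_{\mathfrak g}(u)$, so $\phi_{\mathfrak g}(u)\in\mathfrak g^{1}$; the argument for $\mathfrak g^{-1}$ is identical with a sign change. I would carry out (iv) and (v) first as quick lemmas, then use them to finish (ii) and (iii), leaving the cocycle computation in (i) as the one genuinely laborious verification.
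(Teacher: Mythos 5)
Most of your outline coincides with the paper's own proof: the cocycle computation in (i) via \eqref{SL}, \eqref{L2}, \eqref{AM1}--\eqref{AM2} and skew-symmetry of $\phi_{\mathfrak g}\circ K$, the isotropy argument in (ii), the Nijenhuis cancellation in (iii), and parts (iv) and (v) are all exactly the paper's steps. (Your use of \eqref{EL} inside (ii) before (iii) is harmless, since \eqref{EL} is one line from skew-symmetry and $(\phi_{\mathfrak g}\circ K)^2=Id_{\mathfrak g}$, but you should say so.)

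There is, however, one genuine gap, concentrated in (ii)--(iii). You write ``since each has dimension $n=\tfrac12\dim\mathfrak g$, they are maximal isotropic, i.e.\ Lagrangian,'' and in (iii) you say a para-K\"ahler structure ``is in particular an almost para-Hermitian structure.'' In this paper's definition a para-K\"ahler hom-Lie algebra carries only an almost \emph{product} structure $K$; that the eigenspaces $\mathfrak g^{1}$ and $\mathfrak g^{-1}$ of $\phi_{\mathfrak g}\circ K$ have equal (half) dimension, i.e.\ that $K$ is almost para-complex and the structure almost para-Hermitian, is precisely part of what statement (iii) asserts, so assuming the half-dimension begs the question, and with it your ``maximal isotropic, hence Lagrangian'' step in (ii). The paper avoids this by proving the Lagrangian property directly: $\mathfrak g^1\subseteq(\mathfrak g^1)^{\perp}$ (the $\Omega$-orthogonal) from isotropy, and the reverse inclusion by writing an element of $(\mathfrak g^1)^{\perp}$ as $u+\bar u$ with $u\in\mathfrak g^1$, $\bar u\in\mathfrak g^{-1}$ and invoking non-degeneracy of $\Omega$ to force $\bar u=0$; only afterwards does it deduce $\dim\mathfrak g^1=\dim\mathfrak g^{-1}=\tfrac12\dim\mathfrak g$ from $\dim\mathfrak g=\dim\mathfrak g^1+\dim(\mathfrak g^1)^{\perp}$. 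Your route can be repaired cheaply without following the paper: since $\mathfrak g^{\pm1}$ are isotropic for the non-degenerate form $<,>$, each satisfies $\dim\mathfrak g^{\pm1}\le\tfrac12\dim\mathfrak g$, and since $\mathfrak g=\mathfrak g^{1}\oplus\mathfrak g^{-1}$ (because $(\phi_{\mathfrak g}\circ K)^2=Id_{\mathfrak g}$), both dimensions equal $\tfrac12\dim\mathfrak g$; maximal isotropy with respect to $\Omega$ then yields the Lagrangian claim and the para-complex property needed in (iii). Either way, the half-dimension statement requires an argument; it is not among the hypotheses.
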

\begin{proof}
Applying  (\ref{SL}) and (\ref{*}), we obtain
\begin{align}\label{AM300}
&\Omega([u,v],{ \phi‎‎_\mathfrak{g}}(w))+\Omega([v,w],{ \phi‎‎_\mathfrak{g}}(u))+\Omega([w,u],{ \phi‎‎_\mathfrak{g}}(v))\\
&=-<[u,v],({ \phi‎‎_\mathfrak{g}}\circ K) ({ \phi‎‎_\mathfrak{g}} (w))>-<[v,w],({ \phi‎‎_\mathfrak{g}}\circ K) ({ \phi‎‎_\mathfrak{g}} (u))>-<[w,u],({ \phi‎‎_\mathfrak{g}}\circ K) ({ \phi‎‎_\mathfrak{g}} (v))>\nonumber\\
&=-<u.v,({ \phi‎‎_\mathfrak{g}}\circ K) ({ \phi‎‎_\mathfrak{g}} (w))>+<v.u,({ \phi‎‎_\mathfrak{g}}\circ K) ({ \phi‎‎_\mathfrak{g}} (w))>-<v.w,({ \phi‎‎_\mathfrak{g}}\circ K) ({ \phi‎‎_\mathfrak{g}} (u))>\nonumber\\
&\ \ \ \ +<w.v,({ \phi‎‎_\mathfrak{g}}\circ K) ({ \phi‎‎_\mathfrak{g}} (u))>-<w.u,({ \phi‎‎_\mathfrak{g}}\circ K) ({ \phi‎‎_\mathfrak{g}} (v))>+<u.w,({ \phi‎‎_\mathfrak{g}}\circ K) ({ \phi‎‎_\mathfrak{g}}( v))>,\nonumber
\end{align}
for any $u,v,w\in\mathfrak{g}$. But using (\ref{L2}), (\ref{AM2}) and the skew-symmetric property of ${ \phi_\mathfrak{g}}\circ K$ with respect to $<,>$,  we acquire
\begin{align*}\label{AM301}
<u.v, ({ \phi‎‎_\mathfrak{g}}\circ K)({ \phi‎‎_\mathfrak{g}} (w))>=&<({ \phi‎‎_\mathfrak{g}}\circ K)(u.({ \phi‎‎_\mathfrak{g}}\circ K)(v)), ({ \phi‎‎_\mathfrak{g}}\circ K)({ \phi‎‎_\mathfrak{g}} (w))>\\
&=-<u.({ \phi‎‎_\mathfrak{g}}\circ K)v, ({ \phi‎‎_\mathfrak{g}} w)>=<u.w,{ \phi‎‎_\mathfrak{g}} ({ \phi‎‎_\mathfrak{g}}\circ K)(v)>.\nonumber
\end{align*}
Setting the above equation in (\ref{AM300}) we get
\begin{align*}
&\Omega([u,v],{ \phi‎‎_\mathfrak{g}}(w))+\Omega([v,w],{ \phi‎‎_\mathfrak{g}}(u))+\Omega([w,u],{ \phi‎‎_\mathfrak{g}}(v))\\
&=<u.({ \phi‎‎_\mathfrak{g}}\circ K)v, ({ \phi‎‎_\mathfrak{g}} w)>+<v.({ \phi‎‎_\mathfrak{g}}\circ K)u, ({ \phi‎‎_\mathfrak{g}} w)>-<v.w,({ \phi‎‎_\mathfrak{g}}\circ K) ({ \phi‎‎_\mathfrak{g}} u)>\\
&+<w.({ \phi‎‎_\mathfrak{g}}\circ K)v, ({ \phi‎‎_\mathfrak{g}} u)>-<w.u,({ \phi‎‎_\mathfrak{g}}\circ K) ({ \phi‎‎_\mathfrak{g}} v)>+<u.w,({ \phi‎‎_\mathfrak{g}}\circ K) ({ \phi‎‎_\mathfrak{g}} v)>\\
&=<u.w, ({ \phi‎‎_\mathfrak{g}}^2\circ K) v>+<v.w, ({ \phi‎‎_\mathfrak{g}}^2\circ K)u>-<v.w,K u>\\
&+<w.u, ({ \phi‎‎_\mathfrak{g}}^2\circ K)v>-<w.u,K v>+<u.w,K v>=0.
\end{align*}
Moreover, (\ref{*}) implies
\[
\Omega({ \phi‎‎_\mathfrak{g}}(u),{ \phi‎‎_\mathfrak{g}}(v))=<({ \phi‎‎_\mathfrak{g}}\circ K){ \phi‎‎_\mathfrak{g}}(u), { \phi‎‎_\mathfrak{g}}( v)>=<K(u),{ \phi‎‎_\mathfrak{g}}(v)>=<({ \phi‎‎_\mathfrak{g}}\circ K)u,v>=\Omega(u,v).
\]
Therefore we have $(i)$.

Since $({ \phi_\mathfrak{g}}\circ K)(u)=u$ and $({ \phi_\mathfrak{g}}\circ K)(v)=v$ for any $u,v\in\mathfrak{g}^1$, then $<({ \phi_\mathfrak{g}}\circ K)u,({ \phi_\mathfrak{g}}\circ K)v=<u,v>$. On the other hand, ${ \phi_\mathfrak{g}}\circ K$ is skew-symmetric with respect to $<,>$, hence we deduce $<u,v>=0$. Similarly, it follows that $<\bar{u},\bar{v}>=0$, for any $ \bar{u}, \bar{v}\in\mathfrak{g}^{-1}$. Thus $\mathfrak{g}^1$ and $\mathfrak{g}^{-1}$ are isotropic with respect to $<,>$. Now we prove the second part. Let $u\in \mathfrak{g}^1$. Since 
\[
\Omega(u,v)=<(\phi_{\mathfrak{g}^1}\circ K)(u),v>=<u,v>=0,
\]
for any $v\in\mathfrak{g}^1$, then
 $u\in (\mathfrak{g}^1)^\perp$ and so $\mathfrak{g}^1\subseteq (\mathfrak{g}^1)^\perp$. Now, we consider $0\neq u+\bar{u}\in (\mathfrak{g}^1)^\perp$ such that $u\in \mathfrak{g}^1$ and $\bar{u}\in \mathfrak{g}^{-1}$. Then, we have 
\begin{align*}
0=\Omega(u+\bar{u},v)=\Omega(u,v)+\Omega(\bar{u},v),\ \ \ \forall v\in \mathfrak{g}^1.
\end{align*}
Since $\Omega(u,v)=0$, it follows that $\Omega(\bar{u},v)=0$, for any $v\in\mathfrak{g}^1$. On the other hand, we have $\Omega(\bar{u},\bar{v})=0$, for any $\bar{v}\in \mathfrak{g}^{-1}$. Thus $\Omega(\bar{u},v+\bar{v})=0$, and so $\bar{u}=0$ (because $\Omega$ is non-degenerate). Therefore $(\mathfrak{g}^1)^\perp\subseteq \mathfrak{g}^1$ and consequently $(\mathfrak{g}^1)^\perp=\mathfrak{g}^1$. This shows that $\mathfrak{g}^1$ is a Lagrangian subspace of $\mathfrak{g}$ with respect to $\Omega$. In a similar way, we can deduce that $\mathfrak{g}^{-1}$ is a Lagrangian subspace of $\mathfrak{g}$ with respect to $\Omega$.

To prove (iii), at first we show that $K$ is a para-complex structure. Using (\ref{AM1}) and (\ref{AM2}), we get
\begin{align*}
4N(u,v)=&[({{ \phi‎‎_\mathfrak{g}}\circ K})u, ({{ \phi_\mathfrak{g}}\circ K})v]-({{ \phi_\mathfrak{g}}\circ K})[({{ \phi_\mathfrak{g}}\circ K})u, v]-({{ \phi_\mathfrak{g}}\circ K})[u,({{ \phi_\mathfrak{g}}\circ K})v]+[u,v]\\
=&\phi_\mathfrak{g}( Ku).\phi_\mathfrak{g}( Kv)-\phi_\mathfrak{g}( Kv).\phi_\mathfrak{g}( Ku)-\phi_\mathfrak{g}\circ K(\phi_\mathfrak{g}(Ku).v-v.\phi_\mathfrak{g}(Ku))\\
&-\phi_\mathfrak{g}\circ K(-\phi_\mathfrak{g}(Kv).u+u.\phi_\mathfrak{g}(Kv))+u.v-v.u=\phi_\mathfrak{g}( Ku).\phi_\mathfrak{g}( Kv)\\
&-\phi_\mathfrak{g}( Kv).\phi_\mathfrak{g}( Ku)-\phi_\mathfrak{g}( Ku).\phi_\mathfrak{g}( Kv)+v.u-u.v
+\phi_\mathfrak{g}( Kv).\phi_\mathfrak{g}( Ku)+u.v-v.u=0.
\end{align*}
On the other hand, using (ii) we have
\[
\dim\mathfrak{g}=\dim\mathfrak{g}^1+(\mathfrak{g}^1)^\perp=2\dim\mathfrak{g}^1,
\]
which gives $\dim\mathfrak{g}^1=\dim\mathfrak{g}^{-1}$. Thus $K$ is a para-complex structure on $(g, [\cdot, \cdot], \phi_\mathfrak{g})$. To complete the proof we check (\ref{EL}) for it. Since ${ \phi_\mathfrak{g}}\circ K$ is skew-symmetric with respect to $<,>$, then we have
\[
<({ \phi_\mathfrak{g}}\circ K)u, ({ \phi_\mathfrak{g}}\circ K)v>=-<u, ({ \phi_\mathfrak{g}}\circ K)^2v>=- <u , v>,
\]
which gives the assertion.

To prove (iv)
we consider $u\in\mathfrak{g}$ and $v\in \mathfrak{g}^1$ . Since $({ \phi‎‎_\mathfrak{g}}\circ K)v=v$, using (\ref{AM2}) we get
\[
({ \phi_\mathfrak{g}}\circ K)(u.v)=u.({ \phi_\mathfrak{g}}\circ K)v=u.v.
\]
The above equation implies $u.v\in \mathfrak{g}^1$. Similarly, it follows that $u\cdot\bar{u}\in\mathfrak{g}^{-1}$ for any $\bar{u}\in\mathfrak{g}^{-1}$.

To prove (v), we let $u\in\mathfrak{g}^1$. Since $(
K\circ{ \phi_\mathfrak{g}})(u)=u$ and $K\circ { \phi_\mathfrak{g}}={ \phi_\mathfrak{g}}\circ K$, it follows that
\[
(K\circ \phi_{\mathfrak{g}})(\phi_{\mathfrak{g}}(u))=(\phi_{\mathfrak{g}}\circ K\circ \phi_{\mathfrak{g}})(u)=\phi_{\mathfrak{g}}(u),
\]
which gives ${\phi_\mathfrak{g}}(u)\in \mathfrak{g}^1$. Similarly, we obtain ${ \phi_\mathfrak{g}}(\bar{u})\in \mathfrak{g}^{-1}$ for any $\bar{u}\in \mathfrak{g}^{-1}$.
\end{proof}
\begin{remark}
According to part (v) of the above theorem, we can write 
	$$\phi_{\mathfrak{g}}:{\mathfrak{g}^1\oplus\mathfrak{g}^{-1}}\rightarrow {\mathfrak{g}^1\oplus\mathfrak{g}^{-1}},$$  as $\phi_{\mathfrak{g}}(u+\bar{u})=\phi_{\mathfrak{g}^1}(u)+\phi_{\mathfrak{g}^{-1}}(\bar{u})$. Also, according to (i) and (iv) of the above theorem, a para-K\"{a}hler hom-Lie algebra has two products, the Levi-Civita product and the hom-left
	symmetric product $\bold{ a}$ associated with $(\mathfrak{g}, [\cdot, \cdot], \phi_{\mathfrak{g}}, \Omega)$.
	
\end{remark}
\begin{proposition}
If  $(\mathfrak{g}, [\cdot, \cdot], { \phi‎‎_\mathfrak{g}}, <,>, K, \Omega)$ is a para-K\"{a}hler hom-Lie algebra, then 
for any $u,v\in \mathfrak{g}^1$ and $\bar{u},\bar{v}\in\mathfrak{g}^{-1}$ we have 
\begin{align*}
u\cdot v=&\bold{a}(u,v), \ \ \ \ \ \ \ \ \ \ \ \  \ \ \ \ \ \ \ \  \bar{u}\cdot\bar{v}=\bold{a}(\bar{u},\bar{v}).
\end{align*}
\end{proposition}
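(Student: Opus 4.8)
The plan is to show that the Levi--Civita product $u\cdot v$ satisfies, for $u,v$ in one and the same eigenspace, exactly the identity (\ref{AM10}) that characterizes the hom-left-symmetric product $\bold{a}(u,v)$ of the symplectic hom-Lie algebra $(\mathfrak{g},[\cdot,\cdot],\phi_\mathfrak{g},\Omega)$ furnished by Theorem \ref{304}(i). Since $\Omega$ is non-degenerate and $\phi_\mathfrak{g}$ is onto, $\bold{a}(u,v)$ is the \emph{unique} element of $\mathfrak{g}$ with $\Omega(\bold{a}(u,v),\phi_\mathfrak{g}(w))=-\Omega(\phi_\mathfrak{g}(v),[u,w])$ for all $w\in\mathfrak{g}$, so producing the same relation for $u\cdot v$ forces $u\cdot v=\bold{a}(u,v)$. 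It is convenient to write $J=\phi_\mathfrak{g}\circ K$, so that $J$ restricts to $Id_\mathfrak{g}$ on $\mathfrak{g}^1$ and to $-Id_\mathfrak{g}$ on $\mathfrak{g}^{-1}$, and (\ref{*}) reads $\Omega(x,y)=<Jx,y>$.

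For $u,v\in\mathfrak{g}^1$ and arbitrary $w\in\mathfrak{g}$ I would compute $\Omega(u\cdot v,\phi_\mathfrak{g}(w))$ step by step: first $u\cdot v\in\mathfrak{g}^1$ by Theorem \ref{304}(iv), hence $J(u\cdot v)=u\cdot v$ and $\Omega(u\cdot v,\phi_\mathfrak{g}(w))=<u\cdot v,\phi_\mathfrak{g}(w)>$; next, (\ref{L2}) and then (\ref{SL}) turn this into $-<\phi_\mathfrak{g}(v),u\cdot w>=-<\phi_\mathfrak{g}(v),[u,w]>-<\phi_\mathfrak{g}(v),w\cdot u>$; the term $<\phi_\mathfrak{g}(v),w\cdot u>$ vanishes because $\phi_\mathfrak{g}(v)\in\mathfrak{g}^1$ (Theorem \ref{304}(v)), $w\cdot u\in\mathfrak{g}^1$ (Theorem \ref{304}(iv)), while $\mathfrak{g}^1$ is isotropic for $<,>$ (Theorem \ref{304}(ii)); finally $\phi_\mathfrak{g}(v)\in\mathfrak{g}^1$ also gives $J\phi_\mathfrak{g}(v)=\phi_\mathfrak{g}(v)$, so $<\phi_\mathfrak{g}(v),[u,w]>=\Omega(\phi_\mathfrak{g}(v),[u,w])$. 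Chaining these equalities yields $\Omega(u\cdot v,\phi_\mathfrak{g}(w))=-\Omega(\phi_\mathfrak{g}(v),[u,w])$ for every $w$, i.e.\ $u\cdot v=\bold{a}(u,v)$.

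The second identity, $\bar u\cdot\bar v=\bold{a}(\bar u,\bar v)$ for $\bar u,\bar v\in\mathfrak{g}^{-1}$, is proved by the very same chain of steps; the only difference is that $J$ now acts as $-Id_\mathfrak{g}$, and this sign is introduced once through $J(\bar u\cdot\bar v)=-\bar u\cdot\bar v$ and once through $J\phi_\mathfrak{g}(\bar v)=-\phi_\mathfrak{g}(\bar v)$, so the two cancel and one again reaches $\Omega(\bar u\cdot\bar v,\phi_\mathfrak{g}(w))=-\Omega(\phi_\mathfrak{g}(\bar v),[\bar u,w])$. I do not anticipate a real obstacle: the whole argument hinges only on the membership statements in parts (ii), (iv) and (v) of Theorem \ref{304}, which are what make the stray term $<\phi_\mathfrak{g}(v),w\cdot u>$ disappear and what let one pass freely between $\Omega$ and $<,>$; the mild point to watch is keeping track of these memberships, and it is precisely their failure for mixed arguments $u\in\mathfrak{g}^1$, $v\in\mathfrak{g}^{-1}$ that explains why the statement is restricted to arguments in a common eigenspace.
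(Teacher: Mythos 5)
Your proof is correct and follows essentially the same route as the paper's: both show that $u\cdot v$ pairs with $\phi_\mathfrak{g}(w)$ under $\Omega$ exactly as $\bold{a}(u,v)$ does, using (\ref{AM10}), (\ref{L2}), (\ref{SL}) and parts (ii), (iv), (v) of Theorem \ref{304}, and then conclude from the non-degeneracy of $\Omega$ together with the bijectivity of $\phi_\mathfrak{g}$. The only cosmetic difference is that you eliminate the stray term $<\phi_\mathfrak{g}(v), w\cdot u>$ directly via the isotropy of the eigenspace, whereas the paper obtains $\Omega(\phi_\mathfrak{g}(v), w\cdot u)=0$ through a self-negation argument based on the skew-symmetry and $\phi_\mathfrak{g}$-invariance of $\Omega$.
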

\begin{proof}
We show that the
Levi-Civita product induces a product on $ \mathfrak{g}^1$ and $\mathfrak{g}^{-1}$, which coincides with the hom-left-symmetric
product $\bold{ a}$. Since $\mathfrak{g}$ is a symplectic hom-Lie algebra, then  
 using (\ref{AM302}) and  (\ref{AM10}), we get
\begin{align}\label{AM13}
0=\Omega(\bold{ a}(u,v),{ \phi‎‎_{\mathfrak{g}}}(w))+\Omega({ \phi‎‎_{\mathfrak{g}}}(v),[u,w])=\Omega(\bold{ a}(u,v),{ \phi‎‎_\mathfrak{g}}(w))+\Omega({ \phi‎‎_{\mathfrak{g}}}(v),u.w-w.u),
\end{align}
for any $w\in\mathfrak{g}$ and $u,v\in\mathfrak{g}^1$. On the other hand, (\ref{AM302}),  (\ref{*}) and parts (i) and (v) of Theorem \ref{304} imply
\begin{align}\label{AM12}
\Omega({ \phi‎‎_{\mathfrak{g}}}(v),u.w)=<({ \phi‎‎_{\mathfrak{g}}}\circ K)({ \phi‎‎_{\mathfrak{g}}(v)}),u.w>=<{ \phi‎‎_{\mathfrak{g}}(v)},u.w>=-<{ \phi‎‎_\mathfrak{g}}(w), u.v>=-\Omega(u.v,{ \phi‎‎_\mathfrak{g}}(w)),
\end{align}
and
\[
-\Omega({ \phi_{\mathfrak{g}}}(v),w.u)=<w.u,{ \phi_{\mathfrak{g}}}(v)>=<{ \phi_\mathfrak{g}}(w.u),v>=\Omega(v,{ \phi_\mathfrak{g}}(w.u))=\Omega({\phi_{\mathfrak{g}}}(v), w.u).
\]
The above equation gives
\begin{align}\label{AM11}
\Omega({ \phi_{\mathfrak{g}}}(v),w.u)=0.
\end{align}
Setting (\ref{AM12}) and (\ref{AM11}) in (\ref{AM13}) and using the non-degenerate property of $\Omega$ and $\phi_{\mathfrak{g}}$ we deduce $u\cdot v=\bold{a}(u,v)$. Similarly, we can obtain the second relation.
\end{proof}
\begin{corollary}
If $(\mathfrak{g}, [\cdot, \cdot], { \phi_\mathfrak{g}}, <,>, K, \Omega)$ is a para-K\"{a}hler hom-Lie algebra, then $(\mathfrak{g}^1, \bold{a},\phi_{\mathfrak{g}^1})$ and $(\mathfrak{g}^{-1}, \bold{a},\phi_{\mathfrak{g}^{-1}})$ are hom-left symmetric algebras.
\end{corollary}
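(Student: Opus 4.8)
The plan is to deduce the statement from the preceding proposition together with parts (iv) and (v) of Theorem \ref{304}, by showing that both the product $\bold{a}$ and the structure map $\phi_{\mathfrak{g}}$ restrict to $\mathfrak{g}^1$ and to $\mathfrak{g}^{-1}$, and then observing that the defining identity of a hom-left-symmetric algebra is automatically inherited by any subspace closed under these two operations.

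First I would record the closure properties. By the preceding proposition, for $u,v\in\mathfrak{g}^1$ one has $\bold{a}(u,v)=u\cdot v$, where $\cdot$ is the Levi-Civita product, and by part (iv) of Theorem \ref{304} we have $u\cdot\mathfrak{g}^1\subseteq\mathfrak{g}^1$ for every $u\in\mathfrak{g}$; in particular $\bold{a}$ maps $\mathfrak{g}^1\times\mathfrak{g}^1$ into $\mathfrak{g}^1$, so it restricts to a bilinear product $\bold{a}|_{\mathfrak{g}^1}$ on $\mathfrak{g}^1$. Next, part (v) of Theorem \ref{304} gives $\phi_{\mathfrak{g}}(\mathfrak{g}^1)\subseteq\mathfrak{g}^1$, so $\phi_{\mathfrak{g}^1}:=\phi_{\mathfrak{g}}|_{\mathfrak{g}^1}$ is a well-defined linear endomorphism of $\mathfrak{g}^1$. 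Since $\bold{a}$ is a hom-left-symmetric algebra structure on $\mathfrak{g}$, the map $\phi_{\mathfrak{g}}$ is in particular an algebra morphism for $\bold{a}$, i.e.\ $\phi_{\mathfrak{g}}(\bold{a}(u,v))=\bold{a}(\phi_{\mathfrak{g}}(u),\phi_{\mathfrak{g}}(v))$; restricting this identity to $u,v\in\mathfrak{g}^1$ shows that $(\mathfrak{g}^1,\bold{a}|_{\mathfrak{g}^1},\phi_{\mathfrak{g}^1})$ is a hom-algebra.

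It then remains to verify the hom-left-symmetric identity $ass_{\phi_{\mathfrak{g}}}(u,v,w)=ass_{\phi_{\mathfrak{g}}}(v,u,w)$, with $ass_{\phi_{\mathfrak{g}}}(u,v,w)=\bold{a}(\bold{a}(u,v),\phi_{\mathfrak{g}}(w))-\bold{a}(\phi_{\mathfrak{g}}(u),\bold{a}(v,w))$, for all $u,v,w\in\mathfrak{g}^1$. But this identity already holds for all elements of $\mathfrak{g}$, and every term occurring in it stays inside $\mathfrak{g}^1$ by the closure statements above; hence its restriction to $\mathfrak{g}^1$ is exactly the hom-left-symmetric identity for $(\mathfrak{g}^1,\bold{a}|_{\mathfrak{g}^1},\phi_{\mathfrak{g}^1})$. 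This shows $(\mathfrak{g}^1,\bold{a},\phi_{\mathfrak{g}^1})$ is a hom-left-symmetric algebra, and the identical argument, using instead the relation $\bar{u}\cdot\bar{v}=\bold{a}(\bar{u},\bar{v})$ of the preceding proposition together with parts (iv) and (v) of Theorem \ref{304} for $\mathfrak{g}^{-1}$, handles $(\mathfrak{g}^{-1},\bold{a},\phi_{\mathfrak{g}^{-1}})$. I do not expect a genuine obstacle here; the only point deserving a line of care is that one must invoke the coincidence $\bold{a}=\cdot$ on $\mathfrak{g}^1$ in order to transfer the Levi-Civita closure property (iv) to the product $\bold{a}$ — otherwise the stability of $\mathfrak{g}^1$ under $\bold{a}$ would have to be argued directly from $\bold{a}(u,v)=\flat^{-1}ad^{*}_{\phi_{\mathfrak{g}}(u)}\flat(v)$ and the defining relation (\ref{AM10}).
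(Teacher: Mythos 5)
Your argument is correct and is essentially the one the paper intends: the corollary is stated as an immediate consequence of the preceding proposition (coincidence of $\bold{a}$ with the hom-Levi-Civita product on $\mathfrak{g}^{\pm 1}$) together with parts (iv) and (v) of Theorem \ref{304}, which give closure of $\mathfrak{g}^{\pm 1}$ under the product and under $\phi_{\mathfrak{g}}$, so that the hom-left-symmetric identity and the morphism property of $\phi_{\mathfrak{g}}$ simply restrict. Your added remark that the closure of $\mathfrak{g}^{\pm 1}$ under $\bold{a}$ is transferred from the Levi-Civita product via that coincidence is exactly the point the paper leaves implicit.
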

Propositions \ref{3.9} and \ref{3.10} result the following
\begin{proposition}
Hom-left symmetric algebra structures on $(\mathfrak{g}^1, \bold{a},\phi_{\mathfrak{g}^1})$ and $(\mathfrak{g}^{-1}, \bold{a},\phi_{\mathfrak{g}^-{1}})$ induce hom-Lie algebra structures on $\mathfrak{g}^1$ and $\mathfrak{g}^{-1}$.
\end{proposition}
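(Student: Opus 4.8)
The plan is to deduce this directly from the general machinery of Section~2, exactly as the sentence preceding the statement suggests. By the preceding corollary, $(\mathfrak{g}^1, \bold{a}, \phi_{\mathfrak{g}^1})$ and $(\mathfrak{g}^{-1}, \bold{a}, \phi_{\mathfrak{g}^{-1}})$ are hom-left-symmetric algebras. First I would invoke Proposition~\ref{3.10}, which asserts that every hom-left-symmetric algebra is hom-Lie-admissible; applying it to each of these two algebras shows that their commutator brackets
\[
[u,v]_{\mathfrak{g}^1} := \bold{a}(u,v) - \bold{a}(v,u), \qquad [\bar u, \bar v]_{\mathfrak{g}^{-1}} := \bold{a}(\bar u, \bar v) - \bold{a}(\bar v, \bar u)
\]
satisfy the hom-Jacobi identity with respect to $\phi_{\mathfrak{g}^1}$ and $\phi_{\mathfrak{g}^{-1}}$, respectively. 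Then I would apply Proposition~\ref{3.9}, which promotes a hom-Lie-admissible algebra to a genuine hom-Lie algebra under its commutator bracket, to conclude that $(\mathfrak{g}^1, [\cdot,\cdot]_{\mathfrak{g}^1}, \phi_{\mathfrak{g}^1})$ and $(\mathfrak{g}^{-1}, [\cdot,\cdot]_{\mathfrak{g}^{-1}}, \phi_{\mathfrak{g}^{-1}})$ are hom-Lie algebras.

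For this chain to make sense one should first record that the relevant structures are well defined on the summands: that $\bold{a}$ restricts to a bilinear product $\mathfrak{g}^1 \times \mathfrak{g}^1 \to \mathfrak{g}^1$ and likewise on $\mathfrak{g}^{-1}$, and that $\phi_{\mathfrak{g}}$ preserves each summand. The first point is part~(iv) of Theorem~\ref{304} (the Levi-Civita product, hence by the previous proposition also $\bold{a}$, maps $\mathfrak{g}^1 \times \mathfrak{g}^1$ into $\mathfrak{g}^1$), and the second is part~(v) of the same theorem, which is precisely what lets us write $\phi_{\mathfrak{g}} = \phi_{\mathfrak{g}^1}\oplus \phi_{\mathfrak{g}^{-1}}$. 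One can also note that, since $\bold{a}(u,v) - \bold{a}(v,u) = [u,v]$ by the defining property of $\bold{a}$, the induced brackets are simply the restrictions of the original bracket $[\cdot,\cdot]$ of $\mathfrak{g}$ to the subalgebras $\mathfrak{g}^1$ and $\mathfrak{g}^{-1}$ (which are subalgebras by part~(ii) of Theorem~\ref{304}).

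There is no genuine obstacle here: the statement is a formal corollary of Propositions~\ref{3.9} and~\ref{3.10} once the well-definedness bookkeeping above is in place. The only point deserving an explicit sentence is that the algebra-morphism condition $\phi_{\mathfrak{g}^1}(\bold{a}(u,v)) = \bold{a}(\phi_{\mathfrak{g}^1}(u), \phi_{\mathfrak{g}^1}(v))$ continues to hold for the restricted structure; this is inherited from the corresponding identity for $\bold{a}$ on $\mathfrak{g}$ together with part~(v) of Theorem~\ref{304}, and it is exactly the hypothesis that Proposition~\ref{3.9} requires in order to produce a hom-Lie algebra from the commutator bracket.
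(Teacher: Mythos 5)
Your argument is correct and follows the paper's own route exactly: the paper derives this proposition as an immediate consequence of Propositions \ref{3.10} (hom-left-symmetric implies hom-Lie-admissible) and \ref{3.9} (hom-Lie-admissible gives a hom-Lie algebra under the commutator bracket), which is precisely your chain. The additional bookkeeping you supply---that $\bold{a}$ and $\phi_{\mathfrak{g}}$ restrict to each summand via parts (iv) and (v) of Theorem \ref{304}---is a harmless (indeed welcome) elaboration of what the paper leaves implicit.
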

\section{Phase spaces and para-K\"{a}hler structures}
\begin{definition}
Let $(V, \cdot , \phi)$ be a hom-algebra and $V^*$ be its dual space. If there exists a hom-algebra structure on the direct sum of the underling vector space $(V, \phi)$ and $(V^*, \phi^*)$ ($\phi^*$ is the dual map of $\phi$, i.e., $\phi^*=\phi^t$) such that $(V,\phi)$ and $(V^*,\phi^*)$ are sub-hom-algebras and the natural skew-symmetry bilinear form $\omega$
on $V\oplus V^*$ given by 
\begin{align}\label{S1}
\omega(u+\alpha,v+\beta)=\prec\beta,u‎\succ-\prec\alpha,v‎\succ,
\end{align}
is a symplectic form for any $u,v\in V$ and $\alpha,\beta\in V^*$, then $(V\oplus V^*, \omega)$ is called phase space of $V$.
\end{definition}
\begin{definition}
Let $(\mathfrak{g}, [\cdot,\cdot],\phi_{\mathfrak{g}})$ be a hom-Lie algebra and $\mathfrak{g}^*$ be
its dual space. A phase space of $\mathfrak{g}$ is defined as a hom-Lie algebra $T^*\mathfrak{g}=(\mathfrak{g}\oplus‎\mathfrak{g}^*, [\cdot,\cdot]_{\mathfrak{g}\oplus‎\mathfrak{g}^*}, \phi_{\mathfrak{g}}\oplus\phi_{\mathfrak{g^*}})$ endowed with the symplectic form $\omega$ given by (\ref{S1}), where $\phi_{\mathfrak{g}^*}=(\phi_\mathfrak{g})^*$.
\end{definition}
Let $(\mathfrak{g}, [\cdot, \cdot], \phi_{\mathfrak{g}}, <,>, K,\Omega)$ be a para-K\"{a}hler hom-Lie algebra. For any $\bar{u}\in\mathfrak{g}^{-1}$, we consider $u^*\in(\mathfrak{g}^1)^*$ such that $\prec\bar{u}^*,v\succ=<\bar{u},v>$, for all $v\in\mathfrak{g}^1$. Therefore we can define map $\mathfrak{g}^{-1}\rightarrow(\mathfrak{g}^1)^*$, $\bar{u}\rightarrow\bar{u}^*$, which is an isomorphism.
\begin{lemma}
According to the identification $\mathfrak{g}^{-1}\simeq(\mathfrak{g^1})^*$, endomorphisms $\phi_{\mathfrak{g}^{-1}}$ and $(\phi_{\mathfrak{g}^{1}})^*$ are the same.
\end{lemma}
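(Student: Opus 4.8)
The plan is to unwind the three maps involved and reduce the claim to the symmetry property of $<,>$ recorded in Corollary~\ref{AM350}. First I would recall that by part~(v) of Theorem~\ref{304} the morphism $\phi_\mathfrak{g}$ preserves both summands of $\mathfrak{g}=\mathfrak{g}^1\oplus\mathfrak{g}^{-1}$, so its restrictions $\phi_{\mathfrak{g}^1}$ and $\phi_{\mathfrak{g}^{-1}}$ are genuine endomorphisms of $\mathfrak{g}^1$ and $\mathfrak{g}^{-1}$ (this is exactly the decomposition written in the Remark following Theorem~\ref{304}). Next, the dual endomorphism $(\phi_{\mathfrak{g}^1})^*$ of $(\mathfrak{g}^1)^*$ is, in the transpose convention $\phi^*=\phi^t$ used in the paper, determined by $\prec(\phi_{\mathfrak{g}^1})^*(\alpha),v\succ=\prec\alpha,\phi_{\mathfrak{g}^1}(v)\succ$ for $\alpha\in(\mathfrak{g}^1)^*$ and $v\in\mathfrak{g}^1$. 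Finally, the identification $\mathfrak{g}^{-1}\simeq(\mathfrak{g}^1)^*$ sends $\bar u$ to the functional $\bar u^*$ with $\prec\bar u^*,v\succ=<\bar u,v>$; it is a linear isomorphism precisely because $\mathfrak{g}^1$ and $\mathfrak{g}^{-1}$ are isotropic (Theorem~\ref{304}(ii)) and $<,>$ is non-degenerate, hence its restriction to $\mathfrak{g}^1\times\mathfrak{g}^{-1}$ is a perfect pairing.

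The verification itself then consists of testing the two endomorphisms against an arbitrary $v\in\mathfrak{g}^1$. Fix $\bar u\in\mathfrak{g}^{-1}$. Under the identification, $\phi_{\mathfrak{g}^{-1}}(\bar u)$ corresponds to the functional $v\mapsto<\phi_{\mathfrak{g}^{-1}}(\bar u),v>=<\phi_\mathfrak{g}(\bar u),v>$, while $(\phi_{\mathfrak{g}^1})^*(\bar u^*)$ is the functional $v\mapsto\prec\bar u^*,\phi_{\mathfrak{g}^1}(v)\succ=<\bar u,\phi_\mathfrak{g}(v)>$. Since the underlying hom-Lie algebra carries an almost product structure it is involutive, so $(\mathfrak{g},[\cdot,\cdot],\phi_\mathfrak{g},<,>)$ is a pseudo-Riemannian involutive hom-Lie algebra and Corollary~\ref{AM350} applies: $<\phi_\mathfrak{g}(\bar u),v>=<\bar u,\phi_\mathfrak{g}(v)>$. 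Thus the two functionals agree for every $v\in\mathfrak{g}^1$, i.e. $(\phi_{\mathfrak{g}^{-1}}(\bar u))^*=(\phi_{\mathfrak{g}^1})^*(\bar u^*)$, which is exactly the statement that $\phi_{\mathfrak{g}^{-1}}$ and $(\phi_{\mathfrak{g}^1})^*$ coincide under $\mathfrak{g}^{-1}\simeq(\mathfrak{g}^1)^*$.

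There is no genuine obstacle here: the whole argument is a one-line computation once the conventions are pinned down. The only points requiring care are matching the transpose (no-sign) convention for the dual map $\phi^*=\phi^t$, and invoking involutivity so that Corollary~\ref{AM350} is available; alternatively the identity $<\phi_\mathfrak{g}(\bar u),v>=<\bar u,\phi_\mathfrak{g}(v)>$ can be obtained directly from (\ref{AM302}) together with $\phi_\mathfrak{g}^2=Id$, so the step is robust either way.
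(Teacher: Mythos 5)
Your proposal is correct and follows essentially the same route as the paper: both compute $\prec(\phi_{\mathfrak{g}^1})^*(\bar u^*),v\succ=<\bar u,\phi_{\mathfrak{g}^1}(v)>$ and $\prec(\phi_{\mathfrak{g}^{-1}}(\bar u))^*,v\succ=<\phi_{\mathfrak{g}^{-1}}(\bar u),v>$ and then identify the two functionals via the compatibility $<\phi_\mathfrak{g}(\bar u),v>=<\bar u,\phi_\mathfrak{g}(v)>$ of Corollary \ref{AM350} (equation (\ref{EL1})). The extra remarks you add on why the pairing gives an isomorphism and why involutivity makes Corollary \ref{AM350} applicable are sound but do not change the argument.
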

\begin{proof}
Let $\bar{u}\in\mathfrak{g}^{-1}$ and $\bar{u}^*$ be the corresponding element of it in $(\mathfrak{g^1})^*$. Then we have 
\[
\prec(\phi_{\mathfrak{g}^1})^*(\bar{u}^*), v\succ=\prec\bar{u}^*, \phi_{\mathfrak{g}^1}(v)\succ=<\bar{u}, \phi_{\mathfrak{g}^1}(v)>,\ \ \ \forall v\in\mathfrak{g}^1.
\] 
If we denote by $(\phi_{\mathfrak{g}^{-1}}(\bar{u}))^*$, the corresponding element of $\phi_{\mathfrak{g}^{-1}}(\bar{u})\in\mathfrak{g}^{-1}$ in $(\mathfrak{g^1})^*$, then using (\ref{EL1}) we get 
\[
\prec(\phi_{\mathfrak{g}^{-1}}(\bar{u}))^*, v\succ=<\phi_{\mathfrak{g}^{-1}}(\bar{u}), v>=<\bar{u}, \phi_{\mathfrak{g}^1}(v)>, \ \ \ \forall v\in\mathfrak{g}^1.
\]
Two above equations show that $(\phi_{\mathfrak{g}^1})^*(\bar{u}^*)=(\phi_{\mathfrak{g}^{-1}}(\bar{u}))^*$. 
\end{proof}
\begin{remark}
In the next, according to the identification $\mathfrak{g}^{-1}\simeq(\mathfrak{g^1})^*$, we denote the elements of $\mathfrak{g}^{-1}$ or $(\mathfrak{g^1})^*$ by $\alpha, \beta, \gamma$. Also, to simplify we use ${ \phi_{(\mathfrak{g}^1)^*}}$ instead  $(\phi_{\mathfrak{g}^{1}})^*$.
\end{remark}
\begin{proposition}
Let $(\mathfrak{g}, [\cdot, \cdot], \phi_{\mathfrak{g}}, <,>, K,\Omega)$ be a para-K\"{a}hler hom-Lie algebra and for
  any $u\in\mathfrak{g}^1, \alpha\in(\mathfrak{g}^{1})^*$,  $L_u$ and $L_\alpha$ be the left
multiplication operators on $u\in\mathfrak{g}^1$ and $\alpha\in(\mathfrak{g}^{1})^*$, respectively (i.e., $L_uv=u.v$ and $L_\alpha\beta=\alpha.\beta$ for any $v\in\mathfrak{g}^1,\beta\in(\mathfrak{g}^{1})^*$). Then 
$(\mathfrak{g}^1,\phi_{\mathfrak{g}^1},L)$ and $((\mathfrak{g}^1)^*,\phi_{(\mathfrak{g}^1)^*},L)$ give representations of the hom-Lie algebra $\mathfrak{g}^1$ and $(\mathfrak{g}^{1})^*$, respectively, where
$L:\mathfrak{g}^1\rightarrow gl(\mathfrak{g}^1)$ with $u\rightarrow L_u$ and $L:(\mathfrak{g}^1)^*\rightarrow gl((\mathfrak{g}^1)^*)$ with $\alpha\rightarrow L_\alpha$, .  
\end{proposition}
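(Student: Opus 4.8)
The plan is to isolate one general fact and then apply it twice. The general fact: for any hom-left-symmetric algebra $(V,\cdot,\phi)$, the triple $(V,\phi,L)$, where $L\colon V\to gl(V)$, $u\mapsto L_u$ is the left multiplication, is a representation of the associated hom-Lie algebra $(V,[\cdot,\cdot],\phi)$ (with $[u,v]=u\cdot v-v\cdot u$). Granting this, both halves of the proposition follow: by Theorem \ref{304}(iv) the Levi-Civita product restricts to bilinear operations on $\mathfrak{g}^1$ and on $\mathfrak{g}^{-1}$, by Theorem \ref{304}(v) and the remark following it $\phi_\mathfrak{g}$ restricts to $\phi_{\mathfrak{g}^1}$ and $\phi_{\mathfrak{g}^{-1}}$, and by the proposition showing $u\cdot v=\bold{a}(u,v)$ on $\mathfrak{g}^1$ and $\bar u\cdot\bar v=\bold{a}(\bar u,\bar v)$ on $\mathfrak{g}^{-1}$ these restrictions coincide with $\bold{a}$; hence, by the corollary above, $(\mathfrak{g}^1,\cdot,\phi_{\mathfrak{g}^1})$ and $(\mathfrak{g}^{-1},\cdot,\phi_{\mathfrak{g}^{-1}})$ are hom-left-symmetric algebras whose left multiplications are exactly the $L$ appearing in the statement.

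To prove the general fact I would verify the two conditions (\ref{SS4}) of a representation with $A=\phi$ and $\rho=L$. The first, $L_{\phi(u)}\circ\phi=\phi\circ L_u$, is immediate from $\phi$ being an algebra morphism, since $\phi(u\cdot v)=\phi(u)\cdot\phi(v)$. For the second I would rewrite the defining identity $ass_\phi(u,v,w)=ass_\phi(v,u,w)$ of a hom-left-symmetric algebra as
\[
(u\cdot v)\cdot\phi(w)-\phi(u)\cdot(v\cdot w)=(v\cdot u)\cdot\phi(w)-\phi(v)\cdot(u\cdot w),
\]
and then collect terms to get $(u\cdot v-v\cdot u)\cdot\phi(w)=\phi(u)\cdot(v\cdot w)-\phi(v)\cdot(u\cdot w)$, i.e. $L_{[u,v]}\circ\phi=L_{\phi(u)}\circ L_v-L_{\phi(v)}\circ L_u$, which is precisely the second condition in (\ref{SS4}). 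For the application to $\mathfrak{g}^1$ one also observes that the commutator of the restricted product is the restriction of the bracket of $\mathfrak{g}$, because $\mathfrak{g}^1$ is a subalgebra by Theorem \ref{304}(ii) and $\bold{a}(u,v)-\bold{a}(v,u)=[u,v]$; thus $(V,[\cdot,\cdot],\phi)$ in the general fact genuinely is the hom-Lie algebra $\mathfrak{g}^1$ of the statement, and likewise for $\mathfrak{g}^{-1}$.

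For the dual part, after invoking the lemma above to identify $\mathfrak{g}^{-1}\simeq(\mathfrak{g}^1)^*$ and $\phi_{\mathfrak{g}^{-1}}\simeq\phi_{(\mathfrak{g}^1)^*}$, the same two lines applied to the hom-left-symmetric algebra $((\mathfrak{g}^1)^*,\cdot,\phi_{(\mathfrak{g}^1)^*})$ give that $\alpha\mapsto L_\alpha$ together with $\phi_{(\mathfrak{g}^1)^*}$ is a representation of $(\mathfrak{g}^1)^*$ on itself. I do not expect a genuine obstacle: the two representation axioms are merely transcriptions of the hom-left-symmetry identity and of the morphism property of the structure map. The only point that demands care is the bookkeeping in the first step — checking that the Levi-Civita product really restricts to $\mathfrak{g}^1$ and $\mathfrak{g}^{-1}$, coincides there with $\bold{a}$, and that $\phi_\mathfrak{g}$ restricts accordingly — all of which is furnished by parts (iv) and (v) of Theorem \ref{304}, the remark following it, and the proposition identifying the Levi-Civita product with $\bold{a}$.
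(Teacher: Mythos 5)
Your proposal is correct and follows essentially the same route as the paper: the paper also verifies the two representation axioms directly, deriving $L_{[u,v]}\circ\phi_{\mathfrak{g}^1}=L_{\phi_{\mathfrak{g}^1}(u)}\circ L_v-L_{\phi_{\mathfrak{g}^1}(v)}\circ L_u$ from the hom-left-symmetry identity for $\bold{a}$ on $\mathfrak{g}^1$ and $L_{\phi_{\mathfrak{g}^1}(u)}\circ\phi_{\mathfrak{g}^1}=\phi_{\mathfrak{g}^1}\circ L_u$ from the morphism property, then treating $(\mathfrak{g}^1)^*$ similarly via the identification $\mathfrak{g}^{-1}\simeq(\mathfrak{g}^1)^*$. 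Your packaging of the argument as a general fact about hom-left-symmetric algebras, together with the explicit bookkeeping from Theorem \ref{304}(iv)--(v) and the identification of the Levi-Civita product with $\bold{a}$, is just a cleaner organization of the same computation.
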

\begin{proof}
We must to show that 
\begin{equation}\label{5.2}
\left\{
\begin{array}{cc}
\ \ \ \ \ \ \ \  \ \ \ \ \ \ \ \ &\hspace{-5cm}L_{{ \phi‎‎_{\mathfrak{g}^1}}(u)}\circ{ \phi‎‎_{\mathfrak{g}^1}}={ \phi‎‎_{\mathfrak{g}^1}}\circ L_u,\\
\ \ \ \ \ \ \ \  \ \ &\hspace{-2.3cm}L_{[u,v]}\circ { \phi‎‎_{\mathfrak{g}^1}}=L_{{ \phi‎‎_{\mathfrak{g}^1}}(u)}\circ L_v-L_{ { \phi‎‎_{\mathfrak{g}^1}}‎(v)}\circ L_u,
\end{array}
\right.
\end{equation}
and 
\begin{equation}\label{5.3}
\left\{
\begin{array}{cc}
&\hspace{-2.6cm}L_{{ \phi_{(\mathfrak{g}^1)^*}}(\alpha)}\circ{ \phi_{(\mathfrak{g}^1)^*}}={ \phi_{(\mathfrak{g}^1)^*}}\circ L_\alpha,\\
&\hspace{-.2cm}L_{[\alpha,\beta]}\circ { \phi_{(\mathfrak{g}^1)^*}}
=L_{{ \phi_{(\mathfrak{g}^1)^*}}(\alpha)}\circ L_\beta-L_{ { \phi_{(\mathfrak{g}^1)^*}}‎(\beta)}\circ L_\alpha.
\end{array}
\right.
\end{equation}
If we consider the hom-left-symmetric algebra $\mathfrak{g}^1$, then for any $u,v,w\in\mathfrak{g}^1$ we have
\begin{align*}
\bold{a}(\bold{a}(u,v),{ \phi‎‎_{\mathfrak{g}^1}}(w))-\bold{a}(\bold{a}({ \phi‎‎_{\mathfrak{g}^1}}(u),\bold{a}(v,w))=\bold{a}(\bold{a}(v,u),{ \phi_{\mathfrak{g}^1}}(w))-\bold{a}({ \phi_{\mathfrak{g}^1}}(v),\bold{a}(u,w)),
\end{align*}
where $u.v=\bold{a}(u,v)$ and $\bold{a}(u,v)-\bold{a}(v,u)=[u,v]$. Since  $L_uv=u\cdot v$, the above equation can be written as follows
 \[
L_{[u,v]}\circ { \phi_{\mathfrak{g}^1}}=L_{{ \phi_{\mathfrak{g}^1}}(u)}\circ L_v-L_{ { \phi_{\mathfrak{g}^1}}‎(v)}\circ L_u.
\]
Moreover, we have ${ \phi_{\mathfrak{g}^1}}(u.v)={ \phi_{\mathfrak{g}^1}}(u)\cdot { \phi_{\mathfrak{g}^1}}(v)$, i.e., 
$L_{{ \phi_{\mathfrak{g}^1}}(u)}{ \phi_{\mathfrak{g}^1}}(v)={ \phi_{\mathfrak{g}^1}}L_uv$. Thus we have (\ref{5.2}). Similarly, we can deduce (\ref{5.3}). 
\end{proof}
\begin{corollary}
Let $(\mathfrak{g}, [\cdot, \cdot], \phi_{\mathfrak{g}}, <,>, K,\Omega)$ be a para-K\"{a}hler hom-Lie algebra. Then representation $(\mathfrak{g}^1,\phi_{\mathfrak{g}^1},L)$ is admissible i.e.,  $((\mathfrak{g}^1)^*,\phi^*_{\mathfrak{g}^1},\widetilde{L})$ is a  representation of $\mathfrak{g}$.
\end{corollary}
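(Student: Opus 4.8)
The plan is to read the corollary as the special case, for $\mathfrak{g}^1$ (and for $(\mathfrak{g}^1)^*$), of the fact that the left-multiplication representation of an \emph{involutive} hom-left-symmetric algebra is always admissible; morally this is the same phenomenon as in the Lemma that makes the adjoint representation of an involutive hom-Lie algebra admissible, where every obstruction term is a multiple of $Id_{\mathfrak{g}}-\phi_{\mathfrak{g}}^2$ and hence vanishes. Concretely, by the admissibility criterion (\ref{AM200}) applied with the ambient hom-Lie algebra $\mathfrak{g}^1$, with $A=\phi_{\mathfrak{g}^1}$ and $\rho=L$, I must establish
\begin{equation*}
\phi_{\mathfrak{g}^1}\circ L_{\phi_{\mathfrak{g}^1}(u)}=L_u\circ\phi_{\mathfrak{g}^1},\qquad \phi_{\mathfrak{g}^1}\circ L_{[u,v]}=L_u\circ L_{\phi_{\mathfrak{g}^1}(v)}-L_v\circ L_{\phi_{\mathfrak{g}^1}(u)},
\end{equation*}
for all $u,v\in\mathfrak{g}^1$, and the analogous pair of identities for $(\mathfrak{g}^1)^*$ with $\phi_{(\mathfrak{g}^1)^*}$ in place of $\phi_{\mathfrak{g}^1}$. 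The one preliminary observation I need is that $\phi_{\mathfrak{g}^1}$ is involutive and a morphism of the bracket on $\mathfrak{g}^1$: the para-K\"ahler hom-Lie algebra $\mathfrak{g}$ is involutive (this is part of the data of an almost product structure), and by Theorem \ref{304}(v) the map $\phi_{\mathfrak{g}}$ preserves $\mathfrak{g}^1$ and $\mathfrak{g}^{-1}$, so $\phi_{\mathfrak{g}^1}^2=Id_{\mathfrak{g}^1}$ and $\phi_{\mathfrak{g}^1}([u,v])=[\phi_{\mathfrak{g}^1}(u),\phi_{\mathfrak{g}^1}(v)]$ for $u,v\in\mathfrak{g}^1$; dually $\phi_{(\mathfrak{g}^1)^*}$ has the same properties.

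First I would derive the first identity by substituting $\phi_{\mathfrak{g}^1}(u)$ for $u$ in the first line of (\ref{5.2}); this gives $L_{\phi_{\mathfrak{g}^1}^2(u)}\circ\phi_{\mathfrak{g}^1}=\phi_{\mathfrak{g}^1}\circ L_{\phi_{\mathfrak{g}^1}(u)}$, and involutivity replaces the left-hand side by $L_u\circ\phi_{\mathfrak{g}^1}$. For the second identity I would substitute $\phi_{\mathfrak{g}^1}(u)$ and $\phi_{\mathfrak{g}^1}(v)$ for $u$ and $v$ in the second line of (\ref{5.2}); using $\phi_{\mathfrak{g}^1}^2=Id$ on the right-hand side and $[\phi_{\mathfrak{g}^1}(u),\phi_{\mathfrak{g}^1}(v)]=\phi_{\mathfrak{g}^1}([u,v])$ on the left, this becomes $L_{\phi_{\mathfrak{g}^1}([u,v])}\circ\phi_{\mathfrak{g}^1}=L_u\circ L_{\phi_{\mathfrak{g}^1}(v)}-L_v\circ L_{\phi_{\mathfrak{g}^1}(u)}$. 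Finally, applying the first line of (\ref{5.2}) with $u$ replaced by $[u,v]\in\mathfrak{g}^1$ rewrites $L_{\phi_{\mathfrak{g}^1}([u,v])}\circ\phi_{\mathfrak{g}^1}$ as $\phi_{\mathfrak{g}^1}\circ L_{[u,v]}$, which is exactly the required relation. Hence both conditions of (\ref{AM200}) hold for $(\mathfrak{g}^1,\phi_{\mathfrak{g}^1},L)$.

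The argument for $((\mathfrak{g}^1)^*,\phi_{(\mathfrak{g}^1)^*},L)$ is verbatim the same, with (\ref{5.3}) in place of (\ref{5.2}); since $\phi_{(\mathfrak{g}^1)^*}=(\phi_{\mathfrak{g}^1})^*$ this yields that $((\mathfrak{g}^1)^*,(\phi_{\mathfrak{g}^1})^*,\widetilde{L})$ is a representation of $\mathfrak{g}^1$ (the ``$\mathfrak{g}$'' in the statement should read $\mathfrak{g}^1$), i.e. that $(\mathfrak{g}^1,\phi_{\mathfrak{g}^1},L)$ is admissible. I do not expect any real obstacle: the whole computation is a bookkeeping exercise in re-indexing the identities (\ref{5.2}) and (\ref{5.3}), and the only place where something could go wrong -- the discrepancy between a representation identity and its admissible counterpart -- is precisely where involutivity of $\phi_{\mathfrak{g}^1}$ is used to kill it. If anything deserves care, it is spelling out that $\phi_{\mathfrak{g}^1}$ (and $\phi_{(\mathfrak{g}^1)^*}$) is genuinely involutive and bracket-preserving, which is why I would invoke Theorem \ref{304}(v) and the standing involutivity hypothesis at the outset.
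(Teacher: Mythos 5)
Your proposal is correct: both conditions of (\ref{AM200}) for $(\mathfrak{g}^1,\phi_{\mathfrak{g}^1},L)$ do follow by substituting $\phi_{\mathfrak{g}^1}(u)$, $\phi_{\mathfrak{g}^1}(v)$ into (\ref{5.2}) and using $\phi_{\mathfrak{g}^1}^2=Id_{\mathfrak{g}^1}$ together with $\phi_{\mathfrak{g}^1}[u,v]=[\phi_{\mathfrak{g}^1}(u),\phi_{\mathfrak{g}^1}(v)]$, and the facts you flag ($\phi_{\mathfrak{g}}$ preserves $\mathfrak{g}^1$, is involutive there, and $\mathfrak{g}^1$ is closed under the bracket and the product) are exactly what Theorem \ref{304}(ii),(iv),(v) and the standing involutivity provide; your remark that ``representation of $\mathfrak{g}$'' should read ``of $\mathfrak{g}^1$'' also matches the intended statement. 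The route differs from the paper's in packaging rather than substance: the paper does not reuse (\ref{5.2}) but re-verifies (\ref{AM200}) at the level of the product, deducing the first identity from $\phi_{\mathfrak{g}^1}(u\cdot v)=\phi_{\mathfrak{g}^1}(u)\cdot\phi_{\mathfrak{g}^1}(v)$ with $u$ replaced by $\phi_{\mathfrak{g}^1}(u)$, and the second by writing out the hom-left-symmetric identity for the product on $\mathfrak{g}^1$, applying $\phi_{\mathfrak{g}^1}$ to both sides, and cancelling a $\phi_{\mathfrak{g}^1}(w)$ using invertibility. Since (\ref{5.2}) was itself proved from those same two product identities, the ingredients coincide; what your version buys is the cleaner general observation, parallel to the paper's Lemma on the adjoint representation, that over an involutive structure map the representation identities (\ref{SS4}) already imply the admissibility identities (\ref{AM200}), so no fresh computation with the left-symmetric product is needed. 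The only superfluous part of your plan is the ``analogous pair of identities for $(\mathfrak{g}^1)^*$'' via (\ref{5.3}): admissibility of $(\mathfrak{g}^1,\phi_{\mathfrak{g}^1},L)$ is characterized by (\ref{AM200}) alone, so that extra verification (harmless, and proved the same way) is not required for the corollary as stated.
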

\begin{proof}
According to (\ref{AM200}), it is sufficient to prove the following
\begin{equation*}
\left\{
\begin{array}{cc}
 &\hspace{-3cm}{ \phi_{\mathfrak{g}^1}}\circ L_{{ \phi_{\mathfrak{g}^1}}(u)}=L_u\circ { \phi_{\mathfrak{g}^1}},\\
&{ \phi_{\mathfrak{g}^1}}\circ (L_{[u,v]})=L_u\circ L_{ \phi_{\mathfrak{g}^1}(v)}-L_v\circ L_{ \phi_{\mathfrak{g}^1}(u)}.
\end{array}
\right.
\end{equation*}
If $u,v\in \mathfrak{g}^1$, then using ${ \phi_{\mathfrak{g}^1}}(u\cdot v)={ \phi_{\mathfrak{g}^1}}(u)\cdot { \phi_{\mathfrak{g}^1}}(v)$, we infer ${ \phi_{\mathfrak{g}^1}}({ \phi_{\mathfrak{g}^1}}(u)\cdot v)=u\cdot { \phi_{\mathfrak{g}^1}}(v)$. Setting ${ \phi_{\mathfrak{g}^1}}(u)\cdot v=L_{ \phi_{\mathfrak{g}^1}(u)}v$ in it, we deduce ${ \phi_{\mathfrak{g}^1}}( L_{{ \phi_{\mathfrak{g}^1}}(u)}v)=L_u { \phi_{\mathfrak{g}^1}}(v)$.
On the other hand, for any $w\in\mathfrak{g}^1$ we have 
\[
(u\cdot v)\cdot{ \phi_{\mathfrak{g}^1}}(w)-(v\cdot u)\cdot{ \phi_{\mathfrak{g}^1}}(w)={ \phi_{\mathfrak{g}^1}}(u)\cdot(v\cdot w)-{ \phi_{\mathfrak{g}^1}}(v)\cdot(u\cdot w).
\]
Contracting the above equation with ${ \phi_{\mathfrak{g}^1}}$ implies 
\[
{ \phi_{\mathfrak{g}^1}}((u\cdot v)\cdot{ \phi_{\mathfrak{g}^1}}(w)-(v\cdot u)\cdot{ \phi_{\mathfrak{g}^1}}(w))=u\cdot{ \phi_{\mathfrak{g}^1}}(v\cdot w)-v\cdot{ \phi_{\mathfrak{g}^1}}(u\cdot w).
\]
Setting $L_uv=u\cdot v$ in the above equation follows that
\[
{ \phi_{\mathfrak{g}^1}}(L_{[u,v]}{ \phi_{\mathfrak{g}^1}}(w))=L_uL_{ \phi_{\mathfrak{g}^1}(v)}{ \phi_{\mathfrak{g}^1}}(w)-L_vL_{ \phi_{\mathfrak{g}^1}(u)}{ \phi_{\mathfrak{g}^1}}(w).
\]
\end{proof}
\begin{lemma}
Let $(\mathfrak{g}, [\cdot, \cdot], \phi_{\mathfrak{g}}, <,>, K,\Omega)$ be a para-K\"{a}hler hom-Lie algebra.
Then  $T^*\mathfrak{g}^1=\mathfrak{g}^1\oplus(\mathfrak{g}^1)^*$ is a hom-Lie algebra.
\end{lemma}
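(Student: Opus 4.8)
The plan is to realize $T^*\mathfrak{g}^1 = \mathfrak{g}^1\oplus(\mathfrak{g}^1)^*$ as a phase space in the sense of the Definition preceding the statement, exploiting the structures already assembled: by the Corollary just proved, $(\mathfrak{g}^1,\phi_{\mathfrak{g}^1},L)$ is an admissible representation of the hom-Lie algebra $\mathfrak{g}^1$, so $((\mathfrak{g}^1)^*,\phi^*_{\mathfrak{g}^1},\widetilde{L})$ is also a representation. First I would invoke the Proposition from Section 2 (the one producing a hom-Lie structure on $\mathfrak{g}\oplus\mathfrak{g}^*$ from an admissible representation), applied with $\mathfrak{g}$ replaced by $\mathfrak{g}^1$, $A$ replaced by $\phi_{\mathfrak{g}^1}$, and $\rho$ replaced by $L$. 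This immediately produces a hom-Lie algebra structure on $\mathfrak{g}^1\oplus(\mathfrak{g}^1)^*$ with bracket
\[
[u+\alpha,v+\beta] = [u,v]_{\mathfrak{g}^1} + \widetilde{L}_u(\beta) - \widetilde{L}_v(\alpha)
\]
and twisting map $\Phi(u+\alpha) = \phi_{\mathfrak{g}^1}(u) + \phi^*_{\mathfrak{g}^1}(\alpha)$. Since $[\mathfrak{g}^1,(\mathfrak{g}^1)^*]$-components lie in the other summand, $(\mathfrak{g}^1,\phi_{\mathfrak{g}^1})$ and $((\mathfrak{g}^1)^*,\phi^*_{\mathfrak{g}^1})$ sit inside as sub-hom-algebras, as required.

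Next I would verify that the canonical pairing form $\omega(u+\alpha,v+\beta) = \prec\beta,u\succ - \prec\alpha,v\succ$ is a symplectic structure on this hom-Lie algebra, i.e.\ that it is skew-symmetric (immediate from the formula), non-degenerate (immediate, since it is the standard pairing between a space and its dual), $\phi$-invariant, and a $2$-hom-cocycle. For $\phi$-invariance one computes $\omega(\Phi(u+\alpha),\Phi(v+\beta)) = \prec\phi^*_{\mathfrak{g}^1}(\beta),\phi_{\mathfrak{g}^1}(u)\succ - \prec\phi^*_{\mathfrak{g}^1}(\alpha),\phi_{\mathfrak{g}^1}(v)\succ = \prec\beta,\phi^2_{\mathfrak{g}^1}(u)\succ - \prec\alpha,\phi^2_{\mathfrak{g}^1}(v)\succ$, which equals $\omega(u+\alpha,v+\beta)$ because $\mathfrak{g}$ is involutive and hence so is $\mathfrak{g}^1$ by part (v) of Theorem \ref{304} (restricting $\phi_{\mathfrak{g}}$ to $\mathfrak{g}^1$). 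The cocycle identity is the one genuinely computational step: expand $\omega([x_1,x_2],\Phi(x_3))$ cyclically with $x_i = u_i + \alpha_i$, and show the sum vanishes using the definition of $\widetilde{L}$ together with the representation identities \eqref{5.2}, \eqref{5.3} and their admissible counterparts; this is the standard fact that the canonical symplectic form on $V\oplus V^*$ is closed whenever $V$ carries a (hom-)left-symmetric product, transposed into the hom-setting.

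I expect the main obstacle to be precisely this $2$-hom-cocycle verification: one must carefully unwind $\prec\widetilde{L}_u(\beta),v\succ = -\prec\beta,L_u(v)\succ = -\prec\beta,u\cdot v\succ$ and track how the twisting by $\phi_{\mathfrak{g}^1}$ distributes across the three cyclic terms, making sure the mixed terms (those pairing a $\mathfrak{g}^1$-entry against a $(\mathfrak{g}^1)^*$-entry) cancel in pairs. The key algebraic input is the hom-left-symmetric identity $\bold{a}(\bold{a}(u,v),\phi_{\mathfrak{g}^1}(w)) - \bold{a}(\phi_{\mathfrak{g}^1}(u),\bold{a}(v,w))$ being symmetric in $u,v$, together with $\bold{a}(u,v)-\bold{a}(v,u)=[u,v]$; these are exactly what force the cyclic sum to collapse. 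Once the cocycle condition is in hand, all the clauses of the phase-space definition are met, so $(T^*\mathfrak{g}^1,\omega)$ is a phase space of $\mathfrak{g}^1$ and in particular a hom-Lie algebra, completing the proof.
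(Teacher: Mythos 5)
Your proposal is correct and takes essentially the same route as the paper: the paper's proof just re-runs, in the special case of $\mathfrak{g}^1$ with the admissible representation $(\mathfrak{g}^1,\phi_{\mathfrak{g}^1},L)$, the Section~2 construction of the bracket $[u+\alpha,v+\beta]=[u,v]_{\mathfrak{g}^1}+\widetilde{L}_u\beta-\widetilde{L}_v\alpha$ with twist $\phi_{\mathfrak{g}^1}\oplus\phi_{(\mathfrak{g}^1)^*}$, verifying skew-symmetry and the hom-Jacobi identity from the fact that $((\mathfrak{g}^1)^*,\phi_{(\mathfrak{g}^1)^*},\widetilde{L})$ is a representation, which is precisely the Proposition you invoke. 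Your extra verification that the canonical pairing is a symplectic ($2$-hom-cocycle) form is the content of the paper's next lemma and is not needed for this statement.
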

\begin{proof}
For any $u,v\in\mathfrak{g}^1$ and $\alpha,\beta\in(\mathfrak{g}^1)^*$ we consider a bracket on the vector space $\mathfrak{g}^1\oplus(\mathfrak{g}^1)^*$ as follows 
\[
[u+\alpha,v+\beta]:=[u,v]_{\mathfrak{g}^1}+\widetilde{L}_u\beta-\widetilde{L}_v\alpha.
\]
Also, we define morphism $\phi_{\mathfrak{g}}: \mathfrak{g}^1\oplus(\mathfrak{g}^1)^*\rightarrow {\mathfrak{g}^1}\oplus(\mathfrak{g}^1)^*$ by
\[
\phi_{\mathfrak{g}}(u+\alpha)=\phi_{\mathfrak{g}^1}(u)+{\phi_{(\mathfrak{g}^1)^*}}(\alpha).
\]
Obviously, we have 
\[
[u+\alpha,v+\beta]=-[v+\beta,u+\alpha].
\]
Now, we check that the hom-Jacobi identity for it. We have
\begin{align*}
&\circlearrowleft_{(u,\alpha),(v,\beta),(w,\gamma)}[\phi_{\mathfrak{g}}(u+\alpha),[v+\beta,w+\gamma]]=\circlearrowleft_{(u,\alpha),(v,\beta),(w,\gamma)}[\phi_{\mathfrak{g}^1}(u)+{\phi_{(\mathfrak{g}^1)^*}}(\alpha),[v,w]_{\mathfrak{g}^1}+\widetilde{L}_v\gamma-\widetilde{L}_w\beta]\\
&=\circlearrowleft_{(u,\alpha),(v,\beta),(w,\gamma)}\big([\phi_{\mathfrak{g}^1}(u),[v,w]_{\mathfrak{g}^1}]_{\mathfrak{g}^1}+\widetilde{L}_{\phi_{\mathfrak{g}^1}(u)}(\widetilde{L}_v\gamma-\widetilde{L}_w\beta)-\widetilde{L}_{[v,w]_{\mathfrak{g}^1}}{\phi_{(\mathfrak{g}^1)^*}}(\alpha)\big).
\end{align*}
On the other hand, $((\mathfrak{g}^1)^* ,{\phi_{(\mathfrak{g}^1)^*}}, \widetilde{L})$ is a representation of $\mathfrak{g}^1$. Therefore we have 
\[
\widetilde{L}_{[v,w]_{\mathfrak{g}^1}}\circ {\phi_{(\mathfrak{g}^1)^*}}=\widetilde{L}_{\phi_{\mathfrak{g}^1}(v)}\circ \widetilde{L}_w-\widetilde{L}_{\phi_{\mathfrak{g}^1}(w)}\circ \widetilde{L}_v,
\]
Two above equations imply 
\begin{align*}
&\circlearrowleft_{(u,\alpha),(v,\beta),(w,\gamma)}[\phi_{\mathfrak{g}}(u+\alpha),[v+\beta,w+\gamma]]
=\circlearrowleft_{(u,\alpha),(v,\beta),(w,\gamma)}[\phi_{\mathfrak{g}^1}(u),[v,w]]\\
&+\widetilde{L}_{\phi_{\mathfrak{g}^1}(u)}(\widetilde{L}_v\gamma-\widetilde{L}_w\beta)+\widetilde{L}_{\phi_{\mathfrak{g}^1}(v)}(\widetilde{L}_w\alpha-\widetilde{L}_u\gamma)+\widetilde{L}_{\phi_{\mathfrak{g}^1}(w)}(\widetilde{L}_u\beta-\widetilde{L}_v\alpha)-\widetilde{L}_{\phi_{\mathfrak{g}^1}(v)}\widetilde{L}_w\alpha\\
&+\widetilde{L}_{\phi_{\mathfrak{g}^1}(w)} \widetilde{L}_v\alpha-\widetilde{L}_{\phi_{\mathfrak{g}^1}(w)} \widetilde{L}_u\beta)+\widetilde{L}_{\phi_{\mathfrak{g}^1}(u)}\widetilde{L}_w\beta
-\widetilde{L}_{\phi_{\mathfrak{g}^1}(u)}\widetilde{L}_v\gamma+\widetilde{L}_{\phi_{\mathfrak{g}^1}(v)}\widetilde{L}_u\gamma=0.
\end{align*}
\end{proof}
\begin{lemma}
Let $(\mathfrak{g}, [\cdot, \cdot], \phi_{\mathfrak{g}}, <,>, K,\Omega)$ be a para-K\"{a}hler hom-Lie algebra. Then hom-Lie algebra $T^*\mathfrak{g}^1=\mathfrak{g}^1\oplus(\mathfrak{g}^1)^*$ is a phase space of $\mathfrak{g}^1$.
\end{lemma}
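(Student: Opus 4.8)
The plan is to verify, one by one, the conditions in the definition of a phase space of the hom-Lie algebra $\mathfrak{g}^1$, taking as input the hom-Lie algebra $T^*\mathfrak{g}^1=\mathfrak{g}^1\oplus(\mathfrak{g}^1)^*$ produced in the previous lemma, whose bracket is $[u+\alpha,v+\beta]=[u,v]_{\mathfrak{g}^1}+\widetilde{L}_u\beta-\widetilde{L}_v\alpha$ and whose structure morphism is $\phi_{\mathfrak{g}}(u+\alpha)=\phi_{\mathfrak{g}^1}(u)+\phi_{(\mathfrak{g}^1)^*}(\alpha)$, where $\phi_{(\mathfrak{g}^1)^*}=(\phi_{\mathfrak{g}^1})^*$. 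Thus $T^*\mathfrak{g}^1$ is already of the form $(\mathfrak{g}^1\oplus(\mathfrak{g}^1)^*,[\cdot,\cdot],\phi_{\mathfrak{g}^1}\oplus(\phi_{\mathfrak{g}^1})^*)$ required in the definition, and what is left to show is that $\mathfrak{g}^1$ and $(\mathfrak{g}^1)^*$ sit inside it as sub-hom-Lie algebras and that the canonical skew-symmetric form $\omega$ of (\ref{S1}) is a symplectic structure on it.

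The sub-hom-algebra part is immediate: from the bracket formula $[u,v]=[u,v]_{\mathfrak{g}^1}\in\mathfrak{g}^1$ and $\phi_{\mathfrak{g}}$ restricts to $\phi_{\mathfrak{g}^1}$ on $\mathfrak{g}^1=\mathfrak{g}^1\oplus 0$, while $[\alpha,\beta]=\widetilde{L}_0\beta-\widetilde{L}_0\alpha=0$ and $\phi_{\mathfrak{g}}$ restricts to $(\phi_{\mathfrak{g}^1})^*$ on $(\mathfrak{g}^1)^*=0\oplus(\mathfrak{g}^1)^*$, so both are sub-hom-Lie algebras (the second abelian). The form $\omega(u+\alpha,v+\beta)=\prec\beta,u\succ-\prec\alpha,v\succ$ is skew-symmetric by inspection and non-degenerate because the natural pairing of $\mathfrak{g}^1$ with $(\mathfrak{g}^1)^*$ is perfect: if $\omega(u+\alpha,\cdot)$ vanishes, putting $v=0$ gives $\alpha=0$ and putting $\beta=0$ gives $u=0$. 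For the $\phi$-invariance $\omega(\phi_{\mathfrak{g}}X,\phi_{\mathfrak{g}}Y)=\omega(X,Y)$ I would use $\phi_{(\mathfrak{g}^1)^*}=(\phi_{\mathfrak{g}^1})^*$ together with $\phi_{\mathfrak{g}^1}^2=\mathrm{Id}_{\mathfrak{g}^1}$ --- valid because $\mathfrak{g}$ is involutive and $\phi_{\mathfrak{g}}(\mathfrak{g}^1)\subseteq\mathfrak{g}^1$ by part (v) of Theorem \ref{304} --- so that $\prec(\phi_{\mathfrak{g}^1})^*\beta,\phi_{\mathfrak{g}^1}u\succ=\prec\beta,\phi_{\mathfrak{g}^1}^2u\succ=\prec\beta,u\succ$, and likewise for the $\alpha$-term.

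The substantive point, and the one I expect to be the main obstacle, is the (hom-)cocycle identity
\[
\circlearrowleft_{(u,\alpha),(v,\beta),(w,\gamma)}\omega\big([u+\alpha,v+\beta],\,\phi_{\mathfrak{g}}(w+\gamma)\big)=0.
\]
I would expand the left-hand side by inserting the bracket and the morphism, and then simplify each summand using the defining relation $\prec\widetilde{L}_u\beta,v\succ=-\prec\beta,u\cdot v\succ$ for the dual representation, the morphism property $\phi_{\mathfrak{g}^1}(u\cdot v)=\phi_{\mathfrak{g}^1}(u)\cdot\phi_{\mathfrak{g}^1}(v)$ of the product $\cdot$ on $\mathfrak{g}^1$ (which is simultaneously the Levi--Civita and the hom-left-symmetric product, by the preceding proposition), and the identity $[u,v]_{\mathfrak{g}^1}=u\cdot v-v\cdot u$. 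Collecting the terms of the resulting cyclic sum according to which of $\alpha,\beta,\gamma$ they are paired against, each of the three groups should collapse by means of the representation identities (\ref{5.2}) for $L$ and the admissibility relations (\ref{AM200}) for $(\mathfrak{g}^1,\phi_{\mathfrak{g}^1},L)$, in the same manner as the hom-Jacobi identity for $T^*\mathfrak{g}^1$ was verified in the previous lemma. The delicate part is keeping track of the $\phi_{\mathfrak{g}^1}$-twists introduced by the structure morphism so that the cyclic cancellation is genuine rather than merely formal; the admissibility of $L$ is exactly the ingredient meant to absorb those twists.

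Once these are in place, $T^*\mathfrak{g}^1$ is a hom-Lie algebra of the prescribed form, with $\mathfrak{g}^1$ and $(\mathfrak{g}^1)^*$ as sub-hom-Lie algebras and with $\omega$ a symplectic structure, hence a phase space of $\mathfrak{g}^1$ by definition. As a consistency check one may note that, under the identification $\mathfrak{g}^{-1}\simeq(\mathfrak{g}^1)^*$, the form $\omega$ is the transport of the para-K\"ahler symplectic form $\Omega$ of Theorem \ref{304}(i), which is the structural reason such a phase space should exist.
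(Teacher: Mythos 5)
Your routine verifications (that $\mathfrak{g}^1$ and $(\mathfrak{g}^1)^*$ are sub-hom-algebras, and that $\omega$ is skew-symmetric, non-degenerate and $\Phi$-invariant) are fine, and indeed more explicit than what the paper writes. The genuine gap is precisely the step you yourself flag as ``delicate'' and leave as a plan: the $2$-hom-cocycle identity. If you actually perform the expansion you describe, with the bracket of the preceding lemma, $[u+\alpha,v+\beta]=[u,v]_{\mathfrak{g}^1}+\widetilde{L}_u\beta-\widetilde{L}_v\alpha$, and $\omega$ as in (\ref{S1}), then each term is $\omega([u+\alpha,v+\beta],\Phi(w+\gamma))=\prec\gamma,\phi_{\mathfrak{g}^1}([u,v])\succ+\prec\beta,u\cdot\phi_{\mathfrak{g}^1}(w)\succ-\prec\alpha,v\cdot\phi_{\mathfrak{g}^1}(w)\succ$, and the cyclic sum collects against $\gamma$ into $\prec\gamma,\ \phi_{\mathfrak{g}^1}([u,v])+v\cdot\phi_{\mathfrak{g}^1}(u)-u\cdot\phi_{\mathfrak{g}^1}(v)\succ$. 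So what you need is $\phi_{\mathfrak{g}^1}([u,v])=u\cdot\phi_{\mathfrak{g}^1}(v)-v\cdot\phi_{\mathfrak{g}^1}(u)$ on $\mathfrak{g}^1$. This is not a consequence of (\ref{5.2}) or of the admissibility relations (\ref{AM200}): the relation $\phi_{\mathfrak{g}^1}\circ L_{\phi_{\mathfrak{g}^1}(u)}=L_u\circ\phi_{\mathfrak{g}^1}$ turns it into $(u-\phi_{\mathfrak{g}^1}(u))\cdot v=(v-\phi_{\mathfrak{g}^1}(v))\cdot u$, which is an extra hypothesis, and it fails in the paper's own $4$-dimensional example: there $\mathfrak{g}^1=\mathrm{span}\{e_1,e_2\}$, $[e_1,e_2]=ae_1$, $e_1\cdot e_2=ae_1$, $e_2\cdot e_1=0$, $\phi(e_1)=-e_1$, $\phi(e_2)=e_2$, so $\phi([e_1,e_2])=-ae_1$ while $e_1\cdot\phi(e_2)-e_2\cdot\phi(e_1)=ae_1$. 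Hence the admissibility of $L$ does not ``absorb the twists'', and the cancellation you hope for does not happen from the ingredients you cite.

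The paper's proof goes the other way: what you relegate to a closing ``consistency check'' is its entire argument. It identifies $(\mathfrak{g}^1)^*$ with $\mathfrak{g}^{-1}$ through the metric, uses the isotropy of $\mathfrak{g}^1$ and $\mathfrak{g}^{-1}$ from Theorem \ref{304}(ii) to show that the canonical pairing form coincides with $\Omega(\cdot,\cdot)=<(\phi_{\mathfrak{g}}\circ K)\cdot,\cdot>$ on $\mathfrak{g}^1\oplus\mathfrak{g}^{-1}$, checks $\Omega(\Phi\cdot,\Phi\cdot)=\Omega(\cdot,\cdot)$ there, and then invokes Theorem \ref{304}(i): the $2$-hom-cocycle identity is inherited because $(\mathfrak{g},[\cdot,\cdot],\phi_{\mathfrak{g}},\Omega)$ is already symplectic, i.e.\ the identity is read against the bracket of $\mathfrak{g}$ rather than re-derived from $\widetilde{L}$. (Whether the $\widetilde{L}$-bracket of the preceding lemma agrees with the bracket of $\mathfrak{g}$ under this identification is itself a point the paper glosses over, but the transport of $\Omega$ is the idea that makes the statement provable at all.) Without that metric identification, your direct computation cannot close, so as written the proposal has a real gap at its central step.
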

\begin{proof}
Let $u,v\in\mathfrak{g}^1$ and $\alpha,\beta\in(\mathfrak{g}^1)^*$. Then using the isotropic property of $\mathfrak{g}^1$ and $(\mathfrak{g}^1)^*$ we have
\begin{align*}
\Omega(u+\alpha,v+\beta)=<(\phi_{\mathfrak{g}}\circ K)(u+\alpha),v+\beta>=<u-\alpha,v+\beta>=-\prec\alpha,v‎\succ+\prec\beta,u‎\succ.
\end{align*}
On the other hand, we obtain
\begin{align*}
&\Omega(\phi_{\mathfrak{g}}(u+\alpha),\phi_{\mathfrak{g}}(v+\beta))=<(\phi_{\mathfrak{g}}\circ K)(\phi_{\mathfrak{g}^1}(u)+\phi_{(\mathfrak{g}^1)^*}\alpha),\phi_{\mathfrak{g}}(v+\beta)>\\
&=<\phi_{\mathfrak{g}^1}(u)-\phi_{(\mathfrak{g}^1)^*}\alpha,\phi_{\mathfrak{g}^1}(v)+\phi_{(\mathfrak{g}^1)^*}(\beta)>=<\phi_{\mathfrak{g}^1}(u),\phi_{(\mathfrak{g}^1)^*}(\beta)>-<\phi_{(\mathfrak{g}^1)^*}\alpha,\phi_{\mathfrak{g}^1}(v)>\\
&=<u,\beta>-<\alpha,v>=-\prec\alpha,v‎\succ+\prec\beta,u‎\succ=\Omega(u+\alpha,v+\beta).
\end{align*}
Since $\mathfrak{g}$ is a symplectic hom-Lie algebra, it follows that 
\[
\Omega([u+\alpha,v+\beta],\phi_\mathfrak{g}(w+\gamma))+\Omega([v+\beta,w+\gamma],\phi_\mathfrak{g}(u+\alpha))+\Omega([w+\gamma,u+\alpha],\phi_\mathfrak{g}(v+\beta))=0.
\] 
\end{proof}
\begin{proposition}
Let $(\mathfrak{g}, [\cdot, \cdot], \phi_{\mathfrak{g}}, <,>, K,\Omega)$ be a para-K\"{a}hler hom-Lie algebra. For any $u,v\in \mathfrak{g}^1$ and $\alpha\in(\mathfrak{g}^{1})^*$, we have 
\begin{align*}
u\cdot \alpha=&-L^t_{ \phi_{\mathfrak{g}^1}(u)}\alpha,\ \ \ \ \ \ \ \ \ \ \ \ \ \ \ \ \ \ \ \ \ \ \ \alpha\cdot u=-L^t_{ \phi_{(\mathfrak{g}^1)^*}(\alpha)} u,\\
\bold{a}(u, \alpha)=&R_{{ \phi_{(\mathfrak{g}^1)^*}(\alpha)}}^tu-ad^t_{ \phi_{\mathfrak{g}^1}(u)}\alpha, \ \ \ \ \ \ \ \ \ \bold{a}(\alpha,u)=-ad^t_{ \phi_{(\mathfrak{g}^1)^*}(\alpha)} u+R^t_{ \phi_{\mathfrak{g}^1}(u)}\alpha.
\end{align*}
\end{proposition}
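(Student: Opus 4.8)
The plan is to identify each of the four products by pairing it against the natural duality between $\mathfrak{g}^1$ and $\mathfrak{g}^{-1}\simeq(\mathfrak{g}^1)^*$, using the two structural identities available to us: relation (\ref{L2}) (equivalently Koszul's formula (\ref{Koszul})) for the Levi-Civita product $\cdot$, and relation (\ref{AM10}), namely $\Omega(\bold{a}(x,y),\phi_\mathfrak{g}(z))=-\Omega(\phi_\mathfrak{g}(y),[x,z])$, for the hom-left-symmetric product $\bold{a}$. Throughout I rely on Theorem \ref{304}: parts (iv)--(v) give $u\cdot\mathfrak{g}^{\pm1}\subseteq\mathfrak{g}^{\pm1}$ and $\phi_\mathfrak{g}(\mathfrak{g}^{\pm1})\subseteq\mathfrak{g}^{\pm1}$, part (ii) says $\mathfrak{g}^1,\mathfrak{g}^{-1}$ are isotropic for $<,>$, and part (i) makes (\ref{AM10}) applicable. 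I also use (\ref{EL1}), the identification $\phi_{\mathfrak{g}^{-1}}=\phi_{(\mathfrak{g}^1)^*}$, and the fact that $\phi_\mathfrak{g}$ is an algebra morphism for the Levi-Civita product, $\phi_\mathfrak{g}(u\cdot v)=\phi_\mathfrak{g}(u)\cdot\phi_\mathfrak{g}(v)$, which follows at once from (\ref{Koszul}) and (\ref{AM302}). Finally a preliminary remark on signs: since $\phi_\mathfrak{g}\circ K$ is $+\mathrm{Id}$ on $\mathfrak{g}^1$ and $-\mathrm{Id}$ on $\mathfrak{g}^{-1}$ and both $\mathfrak{g}^{\pm1}$ are isotropic, $\Omega(x,y)=<(\phi_\mathfrak{g}\circ K)x,y>$ vanishes on $\mathfrak{g}^1\times\mathfrak{g}^1$ and on $\mathfrak{g}^{-1}\times\mathfrak{g}^{-1}$, while on $\mathfrak{g}^1\times\mathfrak{g}^{-1}$ (resp. $\mathfrak{g}^{-1}\times\mathfrak{g}^1$) it coincides with $+<\cdot,\cdot>$ (resp. $-<\cdot,\cdot>$), i.e. up to sign with the natural pairing $\prec\cdot,\cdot\succ$.

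\emph{The two Levi-Civita formulas.} By (iv), $u\cdot\alpha\in\mathfrak{g}^{-1}$ and $\alpha\cdot u\in\mathfrak{g}^1$, so it suffices to pair $u\cdot\alpha$ with an arbitrary $v\in\mathfrak{g}^1$ and $\alpha\cdot u$ with an arbitrary $\beta\in\mathfrak{g}^{-1}$. For $u\cdot\alpha$, apply (\ref{L2}) with $w=\phi_\mathfrak{g}(v)$, which lies in $\mathfrak{g}^1$ by (v) and satisfies $\phi_\mathfrak{g}(w)=v$; then move $\phi_\mathfrak{g}$ off of $\alpha$ by (\ref{EL1}) and across the product by the morphism property, to get $<u\cdot\alpha,v>=-<\phi_\mathfrak{g}(\alpha),u\cdot\phi_\mathfrak{g}(v)>=-\prec\alpha,\phi_{\mathfrak{g}^1}(u)\cdot v\succ=\prec-L^t_{\phi_{\mathfrak{g}^1}(u)}\alpha,v\succ$, whence $u\cdot\alpha=-L^t_{\phi_{\mathfrak{g}^1}(u)}\alpha$ by non-degeneracy of the pairing. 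The computation of $\alpha\cdot u$ is the mirror image, using (\ref{L2}) with $w=\phi_\mathfrak{g}(\beta)\in\mathfrak{g}^{-1}$.

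\emph{The formula for $\bold{a}(u,\alpha)$.} Write $\bold{a}(u,\alpha)=p+q$ with $p\in\mathfrak{g}^1$, $q\in\mathfrak{g}^{-1}$, and extract $p$ and $q$ from (\ref{AM10}) by choosing $z$ suitably. Taking $z\in\mathfrak{g}^{-1}$: on the left only $p$ survives the pairing against $\phi_\mathfrak{g}(z)\in\mathfrak{g}^{-1}$, while on the right $[u,z]=u\cdot z-z\cdot u$ contributes only through its $\mathfrak{g}^1$-part $-z\cdot u$; translating via the sign table, substituting $z=\phi_\mathfrak{g}(\beta)$, and inserting the value of $(\cdot)\cdot u$ just proved, one reaches $\prec\eta,p\succ=\prec\eta\cdot\phi_{(\mathfrak{g}^1)^*}(\alpha),u\succ$ for all $\eta\in(\mathfrak{g}^1)^*$, i.e. $p=R^t_{\phi_{(\mathfrak{g}^1)^*}(\alpha)}u$. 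Taking $z\in\mathfrak{g}^1$: now $[u,z]=[u,z]_{\mathfrak{g}^1}\in\mathfrak{g}^1$ and only $q$ survives on the left, giving $\prec\phi_{(\mathfrak{g}^1)^*}(q),v\succ=-\prec ad^t_u\phi_{(\mathfrak{g}^1)^*}(\alpha),v\succ$; applying $\phi_{(\mathfrak{g}^1)^*}$ and using the morphism identity $ad^t_u\circ\phi_{(\mathfrak{g}^1)^*}=\phi_{(\mathfrak{g}^1)^*}\circ ad^t_{\phi_{\mathfrak{g}^1}(u)}$ (the transpose of $ad_{\phi_{\mathfrak{g}^1}(u)}\circ\phi_{\mathfrak{g}^1}=\phi_{\mathfrak{g}^1}\circ ad_u$) yields $q=-ad^t_{\phi_{\mathfrak{g}^1}(u)}\alpha$. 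Hence $\bold{a}(u,\alpha)=R^t_{\phi_{(\mathfrak{g}^1)^*}(\alpha)}u-ad^t_{\phi_{\mathfrak{g}^1}(u)}\alpha$.

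\emph{The formula for $\bold{a}(\alpha,u)$, and the main obstacle.} This last identity comes for free: from $\bold{a}(\alpha,u)-\bold{a}(u,\alpha)=[\alpha,u]=\alpha\cdot u-u\cdot\alpha$ and the two Levi-Civita formulas, $\bold{a}(\alpha,u)=R^t_{\phi_{(\mathfrak{g}^1)^*}(\alpha)}u-ad^t_{\phi_{\mathfrak{g}^1}(u)}\alpha-L^t_{\phi_{(\mathfrak{g}^1)^*}(\alpha)}u+L^t_{\phi_{\mathfrak{g}^1}(u)}\alpha$, and one collapses this with $ad=L-R$ (so $R^t-L^t=-ad^t$ on $(\mathfrak{g}^1)^*$ and $L^t-ad^t=R^t$ on $\mathfrak{g}^1$) to $-ad^t_{\phi_{(\mathfrak{g}^1)^*}(\alpha)}u+R^t_{\phi_{\mathfrak{g}^1}(u)}\alpha$. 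The only genuine difficulty is bookkeeping: keeping the signs of $\Omega$ restricted to the two Lagrangians straight, commuting $\phi_\mathfrak{g}$ correctly through the pairing (\ref{EL1}) and through the products, and consistently using the identification $(\mathfrak{g}^1)^{**}\cong\mathfrak{g}^1$ when reading $R^t_{(\cdot)}u$, $L^t_{(\cdot)}u$ and $ad^t_{(\cdot)}u$ as elements of $\mathfrak{g}^1$; once these conventions are pinned down there is no conceptual obstruction.
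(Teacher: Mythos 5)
Your argument is correct and takes essentially the same route as the paper's proof: the two Levi-Civita formulas come from pairing against $\phi_{\mathfrak{g}}(v)$ and using the metric compatibility (\ref{L2})/(\ref{Koszul}) together with (\ref{EL1}) and the $\phi_{\mathfrak{g}}$-equivariance of the product, while $\bold{a}(u,\alpha)$ is identified componentwise by testing (\ref{AM10}) against each Lagrangian, exactly as in the paper's computation with $\phi_{\mathfrak{g}}(w)$ and $\phi_{(\mathfrak{g}^1)^*}(\gamma)$, including the transpose intertwining $ad^t_u\circ\phi_{(\mathfrak{g}^1)^*}=\phi_{(\mathfrak{g}^1)^*}\circ ad^t_{\phi_{\mathfrak{g}^1}(u)}$. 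The only (harmless) deviation is that you obtain $\bold{a}(\alpha,u)$ algebraically from $\bold{a}(\alpha,u)-\bold{a}(u,\alpha)=[\alpha,u]=\alpha\cdot u-u\cdot\alpha$ and $ad=L-R$, where the paper simply repeats the symmetric computation.
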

\begin{proof}
Let $u,v\in \mathfrak{g}^1$ and $\alpha\in{(\mathfrak{g}^{1})^*}$. Then using (\ref{AM10}), Corollary \ref{AM350} and Koszul's formula we deduce  
\begin{align*}
&2<u\cdot \alpha,\phi‎‎_\mathfrak{g}(v)>=<[u,\alpha],\phi‎‎_\mathfrak{g}(v)>+<[v,u],\phi‎‎_{(\mathfrak{g}^{1})^*}(\alpha)>+<[v,\alpha],\phi‎‎_\mathfrak{g}(u)>\\
&=-\Omega(a(u,v),\phi‎‎_{(\mathfrak{g}^{1})^*}(\alpha))-\Omega(\alpha,\phi‎‎_\mathfrak{g}[v,u])-\Omega(a(v,u),\phi‎‎_{(\mathfrak{g}^{1})^*}(\alpha))
=-2<u\cdot v,\phi‎‎_{(\mathfrak{g}^{1})^*}(\alpha)>\\
&=-2<\phi‎‎_\mathfrak{g}(u\cdot v),\alpha>=-2<\phi‎‎_\mathfrak{g}(u)\cdot \phi‎‎_\mathfrak{g}(v),\alpha>=-2<L^t_{\phi‎‎_\mathfrak{g}(u)}\alpha,\phi‎‎_\mathfrak{g}(v)>.
\end{align*}
As $<,>$ and $\phi_{\mathfrak{g}^1}$ are non-degenerate, then $u\cdot\alpha=-L^t_{ \phi_{\mathfrak{g}^1}(u)}\alpha$. Similarly we get $\alpha\cdot u=-L^t_{ \phi_{(\mathfrak{g}^1)^*}(\alpha)} u$.\\
Considering $w\in \mathfrak{g}^1$ and $\gamma\in(\mathfrak{g}^1)^*$, we can write
\begin{align}\label{SS3}
\Omega(\bold{a}(u,\alpha),\phi_\mathfrak{g}(w+\gamma))=\Omega(\bold{a}(u,\alpha),\phi_\mathfrak{g}(w))+\Omega(\bold{a}(u,\alpha),\phi_{(\mathfrak{g}^{1})^*}(\gamma)).
\end{align}
But using (\ref{AM10}) and (\ref{AM302}), we get
\begin{align*}
&\Omega(\bold{a}(u,\alpha),\phi_\mathfrak{g}(w))=-\Omega(\phi_{(\mathfrak{g}^{1})^*}(\alpha),[u,w])=<\phi_{(\mathfrak{g}^{1})^*}(\alpha),[u,w]>=<\phi_{(\mathfrak{g}^{1})^*}(\alpha),ad_uw>=<ad_u^t\phi_{(\mathfrak{g}^{1})^*}(\alpha),w>\\
&=<\phi_\mathfrak{g}(ad^t_u\phi_{(\mathfrak{g}^{1})^*}(\alpha)),\phi_\mathfrak{g}(w)>=<ad^t_{\phi_\mathfrak{g}(u)}\alpha,\phi_\mathfrak{g}(w)>=-\Omega(ad^t_{\phi_\mathfrak{g}(u)}\alpha,\phi_\mathfrak{g}(w)),
\end{align*}
and
\begin{align*}
&\Omega(\bold{a}(u,\alpha),\phi_{(\mathfrak{g}^{1})^*}(\gamma))=-\Omega(\phi_{(\mathfrak{g}^{1})^*}(\alpha),[u,\gamma])=\Omega(\phi_{(\mathfrak{g}^{1})^*}(\alpha),[\gamma,u])=-\Omega(\bold{a}(\gamma,\alpha),\phi_\mathfrak{g}(u))\\
&=\Omega(\phi_\mathfrak{g}(u),\gamma\cdot \alpha)=<u,\phi_{(\mathfrak{g}^{1})^*}(\gamma)\cdot \phi_{(\mathfrak{g}^{1})^*}(\alpha)>=<u,R_{\phi_{(\mathfrak{g}^{1})^*} (\alpha)}\phi_{(\mathfrak{g}^{1})^*}(\gamma)>=<\phi_{(\mathfrak{g}^{1})^*}(\gamma),R^t_{\phi_{(\mathfrak{g}^{1})^*}(\alpha)}u>\\
&=\Omega(R^t_{\phi_{(\mathfrak{g}^{1})^*}(\alpha)}u,\phi_{(\mathfrak{g}^{1})^*}(\gamma)).
\end{align*}
Setting two above equations in (\ref{SS3}), we get 
$\bold{a}(u,\alpha)=R_{{ \phi‎‎_{(\mathfrak{g}^1)^*}(\alpha)}}^tu-ad^t_{ \phi‎‎_{\mathfrak{g}^1}(u)}\alpha$. Similarly, we get the last equation of the assertion.
\end{proof}
\begin{corollary}
If $u\in\mathfrak{g}^1$ and $\alpha\in{(\mathfrak{g}^1)^*}$, then 
\begin{align*}
 \phi‎‎_\mathfrak{g}(u\cdot \alpha)=&\phi‎‎_{\mathfrak{g}^1}(u)\cdot\phi‎‎_{(\mathfrak{g}^1)^*}(\alpha),\ \ \ \ \ \ \ \ \ \ \ \ \ \ \  \phi‎‎_{\mathfrak{g}}(\alpha\cdot u)=\phi‎‎_{(\mathfrak{g}^1)^*}(\alpha)\cdot\phi‎‎_{\mathfrak{g}^1}(u).
\end{align*}
\end{corollary}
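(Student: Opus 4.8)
The plan is to obtain both identities as an immediate consequence of the fact that $\phi_{\mathfrak{g}}$ is an algebra morphism for the hom-Levi-Civita product, combined with parts (iv) and (v) of Theorem \ref{304} and the identification $\mathfrak{g}^{-1}\simeq(\mathfrak{g}^1)^*$. First I would recall (it was in fact already used in the proof of the preceding proposition) that $\phi_{\mathfrak{g}}(x\cdot y)=\phi_{\mathfrak{g}}(x)\cdot\phi_{\mathfrak{g}}(y)$ for all $x,y\in\mathfrak{g}$, where $\cdot$ denotes the hom-Levi-Civita product. This is read off Koszul's formula (\ref{Koszul}): replacing $(x,y,z)$ by $(\phi_{\mathfrak{g}}(x),\phi_{\mathfrak{g}}(y),\phi_{\mathfrak{g}}(z))$ and using that $\phi_{\mathfrak{g}}$ preserves the bracket together with the invariance (\ref{AM302}) of $<,>$ yields
\[
2<\phi_{\mathfrak{g}}(x)\cdot\phi_{\mathfrak{g}}(y),\phi_{\mathfrak{g}}^{2}(z)>=2<x\cdot y,\phi_{\mathfrak{g}}(z)>=2<\phi_{\mathfrak{g}}(x\cdot y),\phi_{\mathfrak{g}}^{2}(z)>,
\]
and since $\phi_{\mathfrak{g}}$ is an isomorphism and $<,>$ is non-degenerate, the claimed morphism property follows.

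Next I would specialize to $x=u\in\mathfrak{g}^{1}\subset\mathfrak{g}$ and $y=\alpha\in(\mathfrak{g}^{1})^{*}$, the latter viewed inside $\mathfrak{g}^{-1}\subset\mathfrak{g}$ through $\mathfrak{g}^{-1}\simeq(\mathfrak{g}^{1})^{*}$. The morphism property gives $\phi_{\mathfrak{g}}(u\cdot\alpha)=\phi_{\mathfrak{g}}(u)\cdot\phi_{\mathfrak{g}}(\alpha)$, and by part (iv) of Theorem \ref{304} both $u\cdot\alpha$ and $\phi_{\mathfrak{g}}(u)\cdot\phi_{\mathfrak{g}}(\alpha)$ lie in $\mathfrak{g}^{-1}$, so the identity takes place inside $\mathfrak{g}^{-1}\simeq(\mathfrak{g}^{1})^{*}$. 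By part (v) of Theorem \ref{304}, $\phi_{\mathfrak{g}}$ preserves $\mathfrak{g}^{1}$ and $\mathfrak{g}^{-1}$, hence $\phi_{\mathfrak{g}}(u)=\phi_{\mathfrak{g}^{1}}(u)$ and $\phi_{\mathfrak{g}}(\alpha)=\phi_{\mathfrak{g}^{-1}}(\alpha)$; by the lemma identifying $\phi_{\mathfrak{g}^{-1}}$ with $(\phi_{\mathfrak{g}^{1}})^{*}$ and the notational convention $\phi_{(\mathfrak{g}^{1})^{*}}:=(\phi_{\mathfrak{g}^{1}})^{*}$, the latter equals $\phi_{(\mathfrak{g}^{1})^{*}}(\alpha)$. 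Therefore $\phi_{\mathfrak{g}}(u\cdot\alpha)=\phi_{\mathfrak{g}^{1}}(u)\cdot\phi_{(\mathfrak{g}^{1})^{*}}(\alpha)$, which is the first identity; the second, $\phi_{\mathfrak{g}}(\alpha\cdot u)=\phi_{(\mathfrak{g}^{1})^{*}}(\alpha)\cdot\phi_{\mathfrak{g}^{1}}(u)$, follows verbatim by interchanging the two factors.

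An equivalent route, staying inside $\mathfrak{g}^{1}$ and $(\mathfrak{g}^{1})^{*}$, would be to start from the explicit formula $u\cdot\alpha=-L^{t}_{\phi_{\mathfrak{g}^{1}}(u)}\alpha$ of the preceding proposition and push $\phi_{(\mathfrak{g}^{1})^{*}}=(\phi_{\mathfrak{g}^{1}})^{*}$ through it; after transposing, this reduces to the identity $L_{\phi_{\mathfrak{g}^{1}}(u)}\circ\phi_{\mathfrak{g}^{1}}=\phi_{\mathfrak{g}^{1}}\circ L_{u}$, which is the first line of (\ref{5.2}) (one uses involutivity to rewrite $L_{u}=L_{\phi_{\mathfrak{g}^{1}}^{2}(u)}$). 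I do not expect any genuine obstacle here: this is essentially a bookkeeping corollary, and the only point needing care is that the three incarnations of ``$\phi$'' occurring in the statement — the restriction of $\phi_{\mathfrak{g}}$, $\phi_{\mathfrak{g}^{1}}$, and $\phi_{(\mathfrak{g}^{1})^{*}}=(\phi_{\mathfrak{g}^{1}})^{*}$ — are matched consistently under $\mathfrak{g}^{-1}\simeq(\mathfrak{g}^{1})^{*}$, which is exactly the content of the lemma just quoted.
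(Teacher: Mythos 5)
Your proposal is correct, and both of the routes you describe work; your ``equivalent route'' is in fact exactly the paper's proof, which simply substitutes $u\cdot\alpha=-L^t_{\phi_{\mathfrak{g}^1}(u)}\alpha$ from the preceding proposition and then commutes $\phi$ past $L^t$ using the representation identity $L_{\phi_{\mathfrak{g}^1}(u)}\circ\phi_{\mathfrak{g}^1}=\phi_{\mathfrak{g}^1}\circ L_u$ (the first line of (\ref{5.2}), transposed) together with involutivity, obtaining $-L^t_{\phi^2_{\mathfrak{g}^1}(u)}\phi_{(\mathfrak{g}^1)^*}(\alpha)=\phi_{\mathfrak{g}^1}(u)\cdot\phi_{(\mathfrak{g}^1)^*}(\alpha)$. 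Your primary route is genuinely different and a bit more global: you first prove, directly from Koszul's formula (\ref{Koszul}) together with the bracket-morphism property of $\phi_{\mathfrak{g}}$, the invariance (\ref{AM302}) and non-degeneracy, that $\phi_{\mathfrak{g}}(x\cdot y)=\phi_{\mathfrak{g}}(x)\cdot\phi_{\mathfrak{g}}(y)$ for \emph{all} $x,y\in\mathfrak{g}$, and then reduce the corollary to pure bookkeeping via Theorem \ref{304} (iv)--(v) and the lemma identifying $\phi_{\mathfrak{g}^{-1}}$ with $\phi_{(\mathfrak{g}^1)^*}$ under $\mathfrak{g}^{-1}\simeq(\mathfrak{g}^1)^*$. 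What this buys is independence from the explicit formula for $u\cdot\alpha$, and it makes explicit a morphism property that the paper uses without proof elsewhere (e.g.\ in the proof of the preceding proposition and in establishing (\ref{5.2})); what the paper's shorter route buys is that, given the proposition just proved, the corollary is a one-line computation. Your matching of the three incarnations of $\phi$ is exactly the point that needs care, and you handle it correctly.
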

\begin{proof}
Since $L$ is a representation, using the above proposition we get
\[
\phi‎‎_\mathfrak{g}(u\cdot \alpha)=-\phi‎‎_\mathfrak{g}(\L^t_{ \phi‎‎_{\mathfrak{g}^1}(u)}\alpha)=-\L^t_{ \phi‎‎^2_{\mathfrak{g}^1}(u)}\phi_{(\mathfrak{g}^1)^*}(\alpha)=\phi‎‎_{\mathfrak{g}^1}(u)\cdot\phi‎‎_{(\mathfrak{g}^1)^*}(\alpha).
\]
In a similar way, the second part of the assertion yields.
\end{proof}
Conversely, let $V$ be a finite dimensional vector space and $V^*$ be its dual space. We
suppose that $(V,\cdot, {\phi})$ and $(V^*,\cdot,{\phi^*})$ are involutive hom-left-symmetric algebras where $\phi^*=\phi^t$. We extend
the products of $V$ and $V^*$ to $V\oplus V^*$, by putting
\begin{align}\label{N1}
(u+\alpha)\cdot(v+\beta)=u\cdot v-L^t_{{{\phi^*}}(\alpha)}v-L^t_{  { \phi}(u)}\beta+\alpha\cdot\beta,
\end{align}
for any $u,v\in V$ and for any $\alpha,\beta\in V^*$. Also, we define linear map $\Phi:V\oplus V^*\rightarrow V\oplus V^*$ by 
\begin{align}\label{SS16}
\Phi(u+\alpha)= { \phi}(u)+{\phi^*}(\alpha), \ \ \forall u\in V,\ \alpha\in V^*,
\end{align}
which gives  $$\Phi^2(u+\alpha)= \Phi({ \phi}(u)+{\phi^*}(\alpha))={ \phi}^2(u)+{(\phi^*)^2}(\alpha)=u+\alpha.$$
\begin{corollary}
Let $u,v\in V$ and $\alpha,\beta\in V^*$. Then 
\begin{align}\label{AM600}
‎‎\prec ‎u\cdot\alpha,\phi(v)\succ =-‎‎\prec u\cdot v,\phi^*(\alpha)‎\succ‎‎‎‎, \ \ \ \ \ \ ‎‎\prec‎\alpha\cdot u,\phi^*(\beta)\succ=-‎‎\prec \alpha\cdot\beta,\phi(u)‎\succ‎‎‎‎.
\end{align}
\end{corollary}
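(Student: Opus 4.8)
The plan is to read the two mixed products straight off the extension formula~(\ref{N1}) and then push the pairing through the transpose and the morphism property. Specializing~(\ref{N1}) (killing the $V$–$V$ and $V^*$–$V^*$ summands) gives $u\cdot\alpha=-L^t_{\phi(u)}\alpha\in V^*$ for $u\in V,\ \alpha\in V^*$, and $\alpha\cdot u=-L^t_{\phi^*(\alpha)}u\in V$ for $\alpha\in V^*,\ u\in V$; these are exactly the two middle terms of~(\ref{N1}). Both claimed identities then reduce to the facts that $\phi$ and $\phi^*$ are algebra morphisms together with $\phi^*=\phi^t$.

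For the first identity I would compute $\prec u\cdot\alpha,\phi(v)\succ=-\prec L^t_{\phi(u)}\alpha,\phi(v)\succ=-\prec\alpha,L_{\phi(u)}\phi(v)\succ=-\prec\alpha,\phi(u)\cdot\phi(v)\succ$, using the definition of the transpose and of the left multiplication $L$. Since $\phi$ is an algebra morphism of $(V,\cdot,\phi)$ we have $\phi(u)\cdot\phi(v)=\phi(u\cdot v)$, and since $\phi^*=\phi^t$ we get $\prec\alpha,\phi(u\cdot v)\succ=\prec\phi^*(\alpha),u\cdot v\succ$. As $\prec\ ,\ \succ$ is just the evaluation pairing, this last quantity equals $\prec u\cdot v,\phi^*(\alpha)\succ$, so the left-hand side is $-\prec u\cdot v,\phi^*(\alpha)\succ$, as wanted.

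The second identity is handled the same way, but now $L^t_{\phi^*(\alpha)}$ is the transpose of left multiplication inside $V^*$, so it acts on $V^{**}$, which we identify with $V$. I would write $\prec\alpha\cdot u,\phi^*(\beta)\succ=-\prec L^t_{\phi^*(\alpha)}u,\phi^*(\beta)\succ=-\prec\phi^*(\alpha)\cdot\phi^*(\beta),u\succ$, then use that $\phi^*$ is an algebra morphism of $(V^*,\cdot,\phi^*)$ to replace $\phi^*(\alpha)\cdot\phi^*(\beta)$ by $\phi^*(\alpha\cdot\beta)=\phi^t(\alpha\cdot\beta)$, and finally $\prec\phi^t(\alpha\cdot\beta),u\succ=\prec\alpha\cdot\beta,\phi(u)\succ$; this gives the asserted $-\prec\alpha\cdot\beta,\phi(u)\succ$.

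The computation itself is short; the only point requiring care, the main obstacle such as it is, is the bookkeeping of the several dualities: one must keep straight that the transpose in the second identity is taken with respect to the pairing of $V^*$ with $V^{**}$ under the canonical identification $V^{**}\cong V$, and one must invoke the standing hypothesis (stated in the paragraph introducing~(\ref{N1})) that $\phi^*$, not merely $\phi$, is an algebra morphism. Once these identifications are pinned down, each identity is a two-line manipulation.
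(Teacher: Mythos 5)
Your proof is correct and follows essentially the same route as the paper: both read $u\cdot\alpha=-L^t_{\phi(u)}\alpha$ and $\alpha\cdot u=-L^t_{\phi^*(\alpha)}u$ off the extension formula~(\ref{N1}), then move the transpose through the pairing and use the morphism property of $\phi$ (resp. $\phi^*$) together with $\phi^*=\phi^t$; the paper merely runs the same chain of equalities in the opposite direction. Your explicit remark about the $V^{**}\cong V$ identification in the second identity is a point the paper leaves implicit, but it is the same argument.
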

\begin{proof}
Using (\ref{N1}), we obtain
\begin{align*}
\phi^*(\alpha)(u\cdot v)=&‎‎\prec\phi^*(\alpha), u\cdot v‎\succ‎‎=‎‎\prec \phi^t(\alpha),u\cdot v‎\succ=‎‎\prec \alpha,\phi(u\cdot v)‎\succ=‎‎\prec \alpha,\phi(u)\cdot\phi( v)‎\succ\\
‎&=\prec\alpha,L_{\phi(u)}\phi( v)‎\succ‎=‎‎\prec L_{\phi(u)}^t\alpha,\phi(v) ‎\succ‎‎=-‎‎\prec‎ u\cdot\alpha,\phi(v)\succ‎‎,
\end{align*}
for any $u,v\in V$ and $\alpha\in V^*$. Similarly, we get the second part of the assertion.
\end{proof}
\begin{theorem}\label{AM500}
Let $(V, \cdot, { \phi})$ and $(V^*,\cdot,{ {\phi^*}})$ be involutive hom-left-symmetric algebras. Then
 $(V\oplus V^*,\cdot,\Phi)$ is a hom-algebra, where $\cdot$ and $\Phi$ are given by (\ref{N1}) and (\ref{SS16}), respectively.
\end{theorem}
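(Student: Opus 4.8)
The plan is to observe that, by the definition of a hom-algebra, the only thing requiring proof is that $\Phi$ is multiplicative for the product (\ref{N1}): the closure of $\cdot$ on $V\oplus V^*$, its bilinearity, and the linearity of $\Phi$ are all immediate from (\ref{N1}) and (\ref{SS16}), and $\Phi^2=\mathrm{id}$ has already been recorded. So the whole task is the identity
\[
\Phi\big((u+\alpha)\cdot(v+\beta)\big)=\Phi(u+\alpha)\cdot\Phi(v+\beta),\qquad u,v\in V,\ \alpha,\beta\in V^*,
\]
and in particular the hom-left-symmetric axioms of $V$ and $V^*$ will play no role here; only that $\phi,\phi^*$ are algebra morphisms, that they are involutions, and that $\phi^*=\phi^t$.

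First I would expand both sides against (\ref{N1}). Applying $\Phi$ to the right side of (\ref{N1}), and using that $\phi$ acts on the $V$-component $u\cdot v-L^t_{\phi^*(\alpha)}v$ while $\phi^*$ acts on the $V^*$-component $-L^t_{\phi(u)}\beta+\alpha\cdot\beta$, the left-hand side of the target identity becomes
\[
\phi(u\cdot v)-\phi\big(L^t_{\phi^*(\alpha)}v\big)-\phi^*\big(L^t_{\phi(u)}\beta\big)+\phi^*(\alpha\cdot\beta).
\]
For the right-hand side, substitute $\Phi(u+\alpha)=\phi(u)+\phi^*(\alpha)$ and $\Phi(v+\beta)=\phi(v)+\phi^*(\beta)$ into (\ref{N1}), and then apply the involutivity hypotheses $\phi^2=\mathrm{id}_V$ and $(\phi^*)^2=\mathrm{id}_{V^*}$ to collapse $L^t_{(\phi^*)^2(\alpha)}=L^t_\alpha$ and $L^t_{\phi^2(u)}=L^t_u$; this yields
\[
\phi(u)\cdot\phi(v)-L^t_{\alpha}\phi(v)-L^t_{u}\phi^*(\beta)+\phi^*(\alpha)\cdot\phi^*(\beta).
\]
Comparing the $V$- and $V^*$-components of the two expressions, the theorem reduces to four identities: $\phi(u\cdot v)=\phi(u)\cdot\phi(v)$ and $\phi^*(\alpha\cdot\beta)=\phi^*(\alpha)\cdot\phi^*(\beta)$, which hold because $\phi$ and $\phi^*$ are algebra morphisms of $V$ and $V^*$; and the two mixed identities $\phi\big(L^t_{\phi^*(\alpha)}v\big)=L^t_{\alpha}\phi(v)$ and $\phi^*\big(L^t_{\phi(u)}\beta\big)=L^t_{u}\phi^*(\beta)$.

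The core of the argument is the two mixed identities, which I would prove by testing against the pairing $\prec\cdot,\cdot\succ$, using $\prec\phi^*(\gamma),w\succ=\prec\gamma,\phi(w)\succ$ and the defining property of the transpose. For the first, both sides lie in $V$, so for arbitrary $\gamma\in V^*$ I compute
\begin{align*}
\prec\gamma,\phi\big(L^t_{\phi^*(\alpha)}v\big)\succ
&=\prec\phi^*(\gamma),L^t_{\phi^*(\alpha)}v\succ=\prec\phi^*(\alpha)\cdot\phi^*(\gamma),v\succ\\
&=\prec\phi^*(\alpha\cdot\gamma),v\succ=\prec\alpha\cdot\gamma,\phi(v)\succ=\prec\gamma,L^t_{\alpha}\phi(v)\succ,
\end{align*}
where the middle equality is the morphism property of $\phi^*$ on $V^*$; since $\gamma$ is arbitrary and the pairing is non-degenerate, the identity follows. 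The second mixed identity is proved identically with the roles of $V$ and $V^*$ exchanged, now invoking that $\phi$ is a morphism of $V$; equivalently it is just the operator reformulation of the scalar relations (\ref{AM600}) established just before the theorem. I do not expect a genuine obstacle: the only real care is bookkeeping of whether each $L$ and each $L^t$ acts on $V$ or on $V^*$, and applying the involutivity reductions exactly where re-expanding $\Phi(u+\alpha)$ through (\ref{N1}) produces the doubled maps $(\phi^*)^2$ and $\phi^2$ --- which is the single place the ``involutive'' hypothesis genuinely enters.
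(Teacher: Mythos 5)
Your proposal is correct and follows essentially the same route as the paper: expand both sides of the morphism identity for $\Phi$ through (\ref{N1}), use involutivity to collapse $L^t_{(\phi^*)^2(\alpha)}$ and $L^t_{\phi^2(u)}$, and reduce everything to the morphism properties of $\phi,\phi^*$ together with the two mixed identities $\phi\big(L^t_{\phi^*(\alpha)}v\big)=L^t_{\alpha}\phi(v)$ and $\phi^*\big(L^t_{\phi(u)}\beta\big)=L^t_{u}\phi^*(\beta)$. The only difference is cosmetic: where the paper disposes of these mixed identities by invoking that $L$ is an admissible representation, you verify them directly by testing against the pairing (the operator form of (\ref{AM600})), which makes that step more explicit but does not change the argument.
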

\begin{proof}
Using (\ref{SS16}), we have
 \begin{align*}
\Phi(u+\alpha)\cdot\Phi(v+\beta)=&\big({ \phi}(u)+{\phi^*}(\alpha)\big)\cdot\big({ \phi}(v)+{\phi^*}(\beta)\big)\\
=&{ { \phi}}(u)\cdot{ { \phi}}(v)+{ {\phi^*}}(\alpha)\cdot{ { \phi}}(v)+{ { \phi}}(u)\cdot{ {\phi^*}}(\beta)+{ {\phi^*}}(\alpha)\cdot{ {\phi^*}}(\beta),
\end{align*}
 for any $u+\alpha, v+\beta\in V\oplus V^*$, where $u,v\in V$ and $\alpha, \beta\in V^*$.
Since $V$ and $V^*$ are  hom-left-symmetric algebras, it implies
$
{ \phi}(u\cdot v)={ \phi}(u)\cdot{ \phi}(v)$ and ${\phi^*}(\alpha\cdot\beta)={\phi^*}(\alpha)\cdot{\phi^*}(\beta).
$
On the other hand, ${ { \phi}}(u)\in V$ and ${ {\phi^*}}(\alpha)\in V^*$. Thus 
$
{ { \phi}}(u)\cdot{ {\phi^*}}(\beta)=-L^t_{u}{ {\phi^*}}(\beta)$. Similarly, we get  
${ {\phi^*}}(\alpha)\cdot{ { \phi}}(v)=-L^t_{\alpha}{ { \phi}}(v)$. 
The above equations and explanations deduce 
 \begin{align*}
\Phi(u+\alpha)\cdot\Phi(v+\beta)
=&{ { \phi}}(u\cdot v)-L^t_{\alpha}{ { \phi}}(v)-L^t_{u}{ {\phi^*}}(\beta)+{ {\phi^*}}(\alpha\cdot\beta).
\end{align*}
Moreover $L$ is an admissible representation on $V$. So from the above equation, we obtain
 \begin{align*}
\Phi(u+\alpha)\cdot\Phi(v+\beta)
=&{ { \phi}}(u\cdot v)-\Phi(L^t_{{ {\phi^*}}(\alpha)}v)-\Phi (L^t_{{ { \phi}}(u)}\beta)+{ {\phi^*}}(\alpha\cdot\beta)\\
&=\Phi\big(u\cdot v-L^t_{{{\phi^*}}(\alpha)}v-L^t_{ { \phi}(u)}\beta+\alpha\cdot\beta\big)=\Phi(u+\alpha)\cdot\Phi(v+\beta).
\end{align*}
\end{proof}
Hom-algebra $(V\oplus V^*,\cdot,\Phi)$ satisfying Theorem \ref{AM500} is called  \textit{extendible hom-algebra}.
\begin{proposition}
If we consider two bilinear maps ${\rho} : V\times V^*\rightarrow \End(V)$ and ${\rho}^* : V^*\times V\rightarrow \End(V^*)$ 
defined by
\begin{align*}
{\rho}(u,\alpha)=&-L_{ { \phi}(u)}L_{ {\phi^*}‎‎(\alpha)}^t+L^t_{ \alpha} L_u-{ \Phi}\circ L^t_{L^t_{ { \phi}(u)}\alpha}-L_{L^t_{ {\phi^*}(\alpha)}u}\circ{ \phi},\\
{{\rho}^*}(\alpha,u)=&-L_{ {\phi^*}‎‎(\alpha)}L_{ { \phi}(u)}^t+L_{ u}^tL_\alpha-\Phi\circ L^t_{L^t_{ {\phi^*}(\alpha)}u}-L_{L^t_{ { \phi}(u)}\alpha}\circ{ {\phi^*}},
\end{align*}
then the endomorphism ${\rho}^*(\alpha,u)$ is the dual of ${\rho}(u,\alpha)$.
\end{proposition}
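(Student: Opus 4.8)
We must show that $\rho^*(\alpha,u)$ equals the transpose $\bigl(\rho(u,\alpha)\bigr)^t$ as an endomorphism of $V^*$, i.e. that
\[
\prec\rho^*(\alpha,u)(\beta),\,v\succ=\prec\beta,\,\rho(u,\alpha)(v)\succ
\]
for all $u,v\in V$ and $\alpha,\beta\in V^*$. The plan is a direct term-by-term comparison of the right-hand side, expanded via the four summands of $\rho(u,\alpha)$ given in the statement, with the left-hand side, expanded via the four summands of $\rho^*(\alpha,u)$.

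First I would collect the elementary tools. The basic adjunction $\prec L^t_{x}(y),z\succ=\prec y,L_{x}(z)\succ$ for each relevant left multiplication (on $V$, on $V^*$, and on $V\oplus V^*$); the adjointness of $\Phi$, namely $\prec\Phi(\beta),v\succ=\prec\beta,\Phi(v)\succ$ for $\beta\in V^*$ and $v\in V$, which holds because $\Phi$ restricts to $\phi$ on $V$ and to $\phi^*=\phi^t$ on $V^*$; the involutivity $\phi^2=\mathrm{id}_V$, $(\phi^*)^2=\mathrm{id}_{V^*}$; and Corollary~(\ref{AM600}), which lets the two left-multiplication operators attached to an element of $V^*$ (one acting on $V$, one on $V^*$) be traded for each other under the pairing. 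I would also record once and for all that $u\cdot\beta=-L^t_{\phi(u)}\beta$ and $\alpha\cdot v=-L^t_{\phi^*(\alpha)}v$ from (\ref{N1}).

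Then I would match the summands. For the first term $-L_{\phi(u)}\circ L^t_{\phi^*(\alpha)}$ of $\rho(u,\alpha)$, moving $L_{\phi(u)}$ across the pairing produces $L^t_{\phi(u)}$ on $\beta$ and moving $L^t_{\phi^*(\alpha)}$ across produces $L_{\phi^*(\alpha)}$, so that $-\prec\beta,L_{\phi(u)}L^t_{\phi^*(\alpha)}v\succ=-\prec L_{\phi^*(\alpha)}L^t_{\phi(u)}\beta,v\succ$, which is the contribution of the first term of $\rho^*(\alpha,u)$; the second term $L^t_{\alpha}\circ L_u$ is handled identically and yields the second term of $\rho^*$. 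For the two mixed terms $-\Phi\circ L^t_{L^t_{\phi(u)}\alpha}$ and $-L_{L^t_{\phi^*(\alpha)}u}\circ\phi$, I would move $\Phi$ (respectively $\phi$) across the pairing by the adjointness above, then move the remaining outer $L^t$ or $L$ across, and finally invoke Corollary~(\ref{AM600}) together with $\phi^2=\mathrm{id}$ to rewrite the resulting doubled left-multiplication operator; a short computation then identifies these with the fourth and third terms of $\rho^*(\alpha,u)$, respectively. Summing the four matched identities gives the assertion.

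The routine-but-delicate point, and the main obstacle, is the bookkeeping of transposes: the symbol $L_x$ denotes left multiplication in whichever of $V$, $V^*$, or $V\oplus V^*$ is appropriate to $x$ and to its argument, each $L^t$ is the transpose of one specific such operator, and in the two mixed terms one must feed Corollary~(\ref{AM600}) and the involutivity relations in exactly the right way so that the inner operators $L^t_{\phi(u)}\alpha\in V^*$ and $L^t_{\phi^*(\alpha)}u\in V$ end up in the positions prescribed by the formula for $\rho^*$. Once the conventions are pinned down the verification is direct, if slightly lengthy.
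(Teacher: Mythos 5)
Your proposal is correct, and it establishes the same identity as the paper, namely $\prec\rho^*(\alpha,u)\beta,v\succ=\prec\beta,\rho(u,\alpha)v\succ$, by direct computation, but the bookkeeping is organized differently. The paper first rewrites $\rho(u,\alpha)v$ in product form as $\phi(u)\cdot(\alpha\cdot v)-\phi^*(\alpha)\cdot(u\cdot v)-(u\cdot\alpha)\cdot\phi(v)+(\alpha\cdot u)\cdot\phi(v)$ using (\ref{N1}), then pairs with $\beta$, inserts $\Phi$ in both slots of the pairing (which uses involutivity), applies Corollary (\ref{AM600}) to each of the four products, and finally regroups and translates back into $L$/$L^t$ notation to recognize $\rho^*(\alpha,u)\beta$. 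You instead stay at the operator level and match the four summands individually; this works exactly as you predict, with the pattern first$\leftrightarrow$first, second$\leftrightarrow$second, third$\leftrightarrow$fourth, fourth$\leftrightarrow$third. One small correction to your plan: for the two mixed terms you do not actually need Corollary (\ref{AM600}) or $\phi^2=\mathrm{id}$; since $\Phi$ restricts to $\phi$ on $V$ and to $\phi^t$ on $V^*$, the adjunctions $\prec\beta,\Phi(L^t_{\gamma}v)\succ=\prec L_{\gamma}\Phi(\beta),v\succ$ with $\gamma=L^t_{\phi(u)}\alpha$, and $\prec\beta,L_{w}\phi(v)\succ=\prec\Phi(L^t_{w}\beta),v\succ$ with $w=L^t_{\phi^*(\alpha)}u$, already produce the fourth and third terms of $\rho^*(\alpha,u)$ verbatim. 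So your term-by-term route is, if anything, more economical than the paper's, which genuinely needs (\ref{AM600}) and the $\Phi$-invariance of the pairing only because it passes through the products $u\cdot\alpha$ and $\alpha\cdot v$ rather than handling the transposed multiplication operators directly.
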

\begin{proof}
Using the definition of ${\rho}(u,\alpha)$ and (\ref{N1}), we obtain
\begin{align*}
{\rho}(u,\alpha)v=&-L_{ { { \phi}}‎(u)}L_{ { {\phi^*}}(\alpha)}^tv+L^t_{ \alpha} L_uv-{ \Phi‎‎}( L^t_{L^t_{ { { \phi}}‎(u)}\alpha}v)-L_{L^t_{ { {\phi^*}}(\alpha)}u}{ \phi‎‎}(v)\\
=&L_{ { { \phi}}‎(u)}(\alpha\cdot v)+L^t_{ ({ {\phi^*}})^2(\alpha)} (u\cdot v)+{{ \Phi}}( L^t_{u\cdot \alpha}v)+L_{\alpha\cdot u}{ { { \phi}}}(v)\\
=&{ { { \phi}}‎‎(u)}\cdot (\alpha\cdot v)-{ { {\phi^*}}(\alpha)}\cdot ( u\cdot v)-({u\cdot \alpha})\cdot { { { \phi}}‎‎(v)}+({\alpha\cdot u})\cdot { { { \phi}}‎}(v),\ \ \forall v\in V.
\end{align*}
The above equation implies
\begin{align*}
&‎‎\prec {\rho}(u,\alpha)v,\beta\succ =‎‎\prec { { { \phi}}‎(u)}\cdot (\alpha\cdot v)-{ { {\phi^*}}(\alpha)}\cdot ( u\cdot v)-({u\cdot \alpha})\cdot { { { \phi}}‎(v)}+({\alpha\cdot u}){ { { \phi}}‎}(v),\beta\succ \\
&=‎‎\prec \Phi({ { { \phi}}‎(u)}\cdot (\alpha\cdot v)-{ { {\phi^*}}(\alpha)}\cdot ( u\cdot v)-({u\cdot \alpha})\cdot { { { \phi}}‎‎(v)}+({\alpha\cdot u}){ { { \phi}}‎}(v)),\Phi(\beta)\succ \\
&=‎‎\prec u\cdot ({ {\phi^*}}(\alpha)\cdot { { \phi}}‎(v))-\alpha\cdot (( { { \phi}}‎(u)\cdot { { \phi}}(v))-{ {\phi^*}}({u\cdot \alpha})\cdot v+{ { \phi}}({\alpha\cdot u})\cdot v,{ {\phi^*}}(\beta)\succ \\
&=-‎‎\prec { {\phi^*}}(u\cdot \beta),{ {\phi^*}}(\alpha)\cdot { { \phi}}‎(v)\succ +‎‎\prec { {\phi^*}}(\alpha\cdot \beta),( { { \phi}}(u)\cdot { { \phi}}‎(v))\succ +‎‎\prec { {\phi^*}}({u\cdot \alpha})\cdot \beta,{ { \phi}}‎(v)\succ \\
&-‎‎\prec { { \phi}}‎({\alpha\cdot u})\cdot \beta,{ { \phi}}‎(v)\succ 
=‎‎\prec { {\phi^*}}(\alpha)\cdot(u\cdot \beta)-{ { \phi}}(u)\cdot(\alpha\cdot \beta)+({u\cdot \alpha})\cdot { {\phi^*}}(\beta)-({\alpha\cdot u})\cdot { {\phi^*}}(\beta),v\succ \\
&=‎‎\prec -L_{ { {\phi^*}}(\alpha)}L_{ { { \phi}}‎(u)}^t\beta+L_u^tL_\alpha\beta-\Phi(L^t_{L^t_{ { {\phi^*}}(\alpha)}u}\beta)-L_{L^t_{ { { \phi}}(u)}\alpha}{ { {\phi^*}}}(\beta),v\succ =‎‎\prec {{\rho}^*}(\alpha,u)\beta,v\succ ,
\end{align*}
for any $\beta\in V^*$.
\end{proof}
\begin{theorem}\label{AM501}
On an extendible hom-algebra $(V\oplus V^*,\cdot,\Phi)$, tensor curvature $\mathcal{K}$ of the product (\ref{N1}), 
satisfies the following
\begin{align*}
\mathcal{K}(u,v)w=\mathcal{K}(u,v)\alpha=\mathcal{K}(\alpha,\beta)w=\mathcal{K}(\alpha,\beta)\gamma=0,
\end{align*}
and 
\begin{align*}
\mathcal{K}(u,\alpha)v={\rho}(u,\alpha)v,\ \ \ \ \ \ \ \ \  \mathcal{K}(\alpha, u)\beta={\rho}^*(\alpha, u)\beta,
\end{align*}
for any $u,v,w\in V$ and $\alpha,\beta,\gamma\in V^*$.
\end{theorem}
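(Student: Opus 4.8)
The plan is to compute everything componentwise with respect to the splitting $V\oplus V^*$. First I would record how the left multiplication of the product $(\ref{N1})$ decomposes: for $u,v\in V$ and $\alpha,\beta\in V^*$ one has $L_uv=u\cdot v$ (the product of $V$), $L_u\beta=-L^t_{\phi(u)}\beta$, $L_\alpha v=-L^t_{\phi^*(\alpha)}v$, and $L_\alpha\beta=\alpha\cdot\beta$ (the product of $V^*$); thus $L_u$ preserves $V$ and carries $V^*$ into $V^*$, while $L_\alpha$ carries $V$ into $V$ and preserves $V^*$. I would also note that $\mathcal{K}$ is skew in its first two arguments, $\mathcal{K}(b,a)=-\mathcal{K}(a,b)$, which is immediate from $(\ref{SS8})$ and the skew-symmetry of the commutator; hence the six cases listed exhaust what has to be checked.

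For the vanishing statements the key elementary fact is that on \emph{any} hom-left-symmetric algebra the tensor curvature is identically zero: rewriting $ass_\phi(u,v,w)=ass_\phi(v,u,w)$ gives $(u\cdot v-v\cdot u)\cdot\phi(w)=\phi(u)\cdot(v\cdot w)-\phi(v)\cdot(u\cdot w)$, i.e.\ $L_{[u,v]}\circ\phi=L_{\phi(u)}\circ L_v-L_{\phi(v)}\circ L_u$, which is precisely $\mathcal{K}=0$. Since $V$ and $V^*$ are sub-hom-left-symmetric algebras of $(V\oplus V^*,\cdot,\Phi)$ and the product, bracket and $\Phi$ all restrict to them, the tensor curvature of the big product with all arguments in $V$ (resp.\ all in $V^*$) coincides with that of $V$ (resp.\ $V^*$), giving $\mathcal{K}(u,v)w=0$ and $\mathcal{K}(\alpha,\beta)\gamma=0$. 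For $\mathcal{K}(u,v)\alpha$ I would expand via the splitting above; all three terms land in $V^*$, and after using involutivity ($\phi^2=\mathrm{id}$, $(\phi^*)^2=\mathrm{id}$), the definition of transpose and $\phi^*=\phi^t$, pairing the outcome with an arbitrary $w\in V$ collapses it to $\prec\alpha,\ \phi(v)\cdot(u\cdot w)-\phi(u)\cdot(v\cdot w)+[u,v]\cdot\phi(w)\succ$, which vanishes by the hom-left-symmetric identity in $V$ just quoted; hence $\mathcal{K}(u,v)\alpha=0$, and $\mathcal{K}(\alpha,\beta)w=0$ follows by the mirror computation in $V^*$.

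For the two identification formulas I would expand $\mathcal{K}(u,\alpha)v=L_{\phi(u)}(L_\alpha v)-L_{\phi^*(\alpha)}(L_u v)-L_{[u,\alpha]}(\phi(v))$ term by term. The first term becomes $-L_{\phi(u)}L^t_{\phi^*(\alpha)}v$; the second, using $(\phi^*)^2=\mathrm{id}$, becomes $L^t_\alpha L_u v$; for the third one splits $[u,\alpha]=u\cdot\alpha-\alpha\cdot u$ into its $V^*$-part $-L^t_{\phi(u)}\alpha$ and its $V$-part $L^t_{\phi^*(\alpha)}u$, so that left multiplication on $\phi(v)\in V$ yields respectively $-\Phi\circ L^t_{L^t_{\phi(u)}\alpha}v$ (here one needs the commutation rule $\phi\circ L^t_\xi=L^t_{\phi^*\xi}\circ\phi$ on $V$, which follows from $\phi^*$ being an involutive morphism of $V^*$) and $-L_{L^t_{\phi^*(\alpha)}u}\circ\phi\,(v)$. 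Adding the four pieces reproduces exactly the operator $\rho(u,\alpha)$ defined just before the statement, whence $\mathcal{K}(u,\alpha)v=\rho(u,\alpha)v$; and $\mathcal{K}(\alpha,u)\beta=\rho^*(\alpha,u)\beta$ follows by the same computation with the roles of $V$ and $V^*$ (and of $\phi$ and $\phi^*$) interchanged. The only genuine difficulty is the bookkeeping in this last step: keeping straight whether each $L$ and each transpose refers to the product of $V$ or of $V^*$, and commuting $\phi$, $\phi^*$ correctly past transposed multiplications — the involutivity hypotheses on $V$ and $V^*$ are exactly what makes these commutations valid.
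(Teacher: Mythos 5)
Your proposal is correct and takes essentially the same route as the paper: vanishing of $\mathcal{K}$ on pure $V$- and pure $V^*$-triples via the hom-left-symmetric identity, the dual-pairing computation against an arbitrary $w$ (using involutivity and $\phi^*=\phi^t$) to get $\mathcal{K}(u,v)\alpha=0$ and its mirror, and a direct term-by-term expansion of $\mathcal{K}(u,\alpha)v$ that reproduces $\rho(u,\alpha)$, with $\mathcal{K}(\alpha,u)\beta=\rho^*(\alpha,u)\beta$ by symmetry. The only difference is presentational: you state explicitly the commutation rule $\phi\circ L^t_\xi=L^t_{\phi^*(\xi)}\circ\phi$ forced by involutivity, which the paper uses implicitly when handling the bracket term $L_{[u,\alpha]}\circ\Phi$.
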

\begin{proof}
We consider $u,v,w\in V$ and $\alpha,\beta,\gamma\in V^*$.  Since $V$ and $V^*$ are hom- left-symmetric algebras, then  we have
\[
\mathcal{K}(u,v)w=\mathcal{K}(\alpha,\beta)\gamma=0.
\]
Applying  (\ref{AM302}) and (\ref{SL}), we get
\begin{align*}\label{SS10}
&‎‎\prec \mathcal{K}(u,v)\alpha,w\succ 
=‎‎\prec L_{ { \phi}(u)}(v\cdot\alpha)-L_{{ {  { \phi}}}(v)}(u\cdot\alpha)-L_{[u,v]}{{ { {\phi^*}}}}(\alpha),w\succ =‎‎\prec {{ {  { \phi}}}(u)}\cdot(v\cdot\alpha)\\
&-{{ {  { \phi}}}(v)}\cdot(u\cdot\alpha)-{(u\cdot v)}\cdot{{{ {\phi^*}}}}(\alpha)+{(v\cdot u)}\cdot{{{ {\phi^*}}}}(\alpha),w\succ 
=‎‎\prec u\cdot({  { \phi}}(v)\cdot { {\phi^*}}(\alpha))\nonumber\\
&-v\cdot(  { \phi}(u)\cdot {\phi^*}(\alpha))-{  { \phi}(u\cdot v)}\cdot\alpha+{  { \phi}(v\cdot u)}\cdot \alpha, { \phi}(w)\succ =-‎‎\prec  { \phi}(u\cdot w), { \phi}(v)\cdot {\phi^*}(\alpha)\succ \nonumber\\
&+‎‎\prec  { \phi}(v\cdot w),  { \phi}(u)\cdot{\phi^*}(\alpha))\succ +‎‎\prec {  { \phi}(u\cdot v)}\cdot w,{\phi^*}(\alpha)\succ 
-‎‎\prec {  { \phi}(v.u)}\cdot w, {\phi^*}(\alpha)\succ \nonumber\\
&=‎‎\prec  { \phi}(v)\cdot(u\cdot w)
-  { \phi}(u)\cdot(v\cdot w)+{ (u\cdot v)}\cdot { \phi}(w)-(v\cdot u)\cdot { \phi}(w),\alpha\succ =0\nonumber,\ \ \forall w\in V,
\end{align*}
which gives $K(u,v)\alpha=0$. Similarly, we obtain $K(\alpha,\beta)u=0$.
Also  (\ref{N1}) implies
\begin{align*}
&\mathcal{K}(u,\alpha)v=L_{ { \phi}(u)}(\alpha \cdot v)-L_{{\phi^*}(\alpha)}(u\cdot v)-L_{[u,\alpha]}\phi(v)\\
&=-L_{ { \phi}(u)}L_{ {\phi^*}‎‎(\alpha)}^tv-L_{ {\phi^*}(\alpha)} (L_uv)-L_{u\cdot\alpha-\alpha.u}\phi(v)\\
&=-L_{ { \phi}(u)}L_{ {\phi^*}‎‎(\alpha)}^tv+L^t_{ ({\phi^*})‎‎^2(\alpha)} L_uv-{  { \phi}}(L^t_{L^t_{  { \phi}(u)}\alpha}v)-L_{L^t_{ {\phi^*}(\alpha)}u}{  { \phi}}(v)
={\rho}(u,\alpha)v.
\end{align*}
Similarly, we obtain $\mathcal{K}(\alpha,u)\beta=\widetilde{\rho}(\alpha,u)\beta$. 
\end{proof}
\begin{proposition}
Let $(V\oplus V^*,\cdot,\Phi)$ be an extendible hom-algebra. Then the product $\cdot$ is hom-left-symmetric if and
only if $\rho(u,\alpha)=\rho^*(\alpha,u)=0$, for any $u\in V$ and $\alpha\in V^*$.
\end{proposition}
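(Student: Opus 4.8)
The plan is to recast hom-left-symmetry of the product (\ref{N1}) as the identical vanishing of its tensor curvature $\mathcal{K}$ (cf. (\ref{SS8})), and then to read off the answer from Theorem \ref{AM501}. First I would record a purely formal identity, valid in \emph{any} hom-algebra $(W,\cdot,\psi)$ with commutator $[x,y]=x\cdot y-y\cdot x$: expanding the definitions of $ass_\psi$ and of $\mathcal{K}$ gives
\begin{align*}
ass_\psi(u,v,w)-ass_\psi(v,u,w)&=(u\cdot v-v\cdot u)\cdot\psi(w)-\psi(u)\cdot(v\cdot w)+\psi(v)\cdot(u\cdot w)\\
&=-\big(L_{\psi(u)}\circ L_v-L_{\psi(v)}\circ L_u-L_{[u,v]}\circ\psi\big)(w)=-\mathcal{K}(u,v)w,
\end{align*}
for all $u,v,w\in W$. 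Hence $(W,\cdot,\psi)$ is hom-left-symmetric if and only if $\mathcal{K}\equiv 0$. Specializing to $W=V\oplus V^*$ with $\psi=\Phi$, the statement to prove becomes: the tensor curvature of (\ref{N1}) vanishes identically if and only if $\rho(u,\alpha)=0$ and $\rho^*(\alpha,u)=0$ for all $u\in V$, $\alpha\in V^*$.

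Next I would break down $\mathcal{K}\equiv 0$ using multilinearity. Since $\mathcal{K}(\cdot,\cdot)(\cdot)$ is bilinear in its first two arguments and linear in the third, and $V$ together with $V^*$ spans $V\oplus V^*$, the condition $\mathcal{K}\equiv 0$ is equivalent to the vanishing of $\mathcal{K}$ on every triple whose three entries each lie in $V$ or in $V^*$. I would also note that $\mathcal{K}$ is skew-symmetric in its first two slots — immediate from (\ref{SS8}) together with $[y,x]=-[x,y]$, which forces $L_{[y,x]}=-L_{[x,y]}$ — so the mixed triples $\mathcal{K}(\alpha,v)(\cdot)$ and $\mathcal{K}(u,\beta)(\cdot)$ are, up to a sign, among the two shapes $\mathcal{K}(u,\alpha)v$ and $\mathcal{K}(\alpha,u)\beta$ already computed in Theorem \ref{AM501}.

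Theorem \ref{AM501} then settles all the cases at once: the four pure triples $\mathcal{K}(u,v)w$, $\mathcal{K}(u,v)\gamma$, $\mathcal{K}(\alpha,\beta)w$, $\mathcal{K}(\alpha,\beta)\gamma$ vanish unconditionally and impose no restriction, whereas $\mathcal{K}(u,\alpha)v=\rho(u,\alpha)v$, $\mathcal{K}(\alpha,u)\beta=\rho^*(\alpha,u)\beta$, and hence (by skew-symmetry) $\mathcal{K}(\alpha,u)v=-\rho(u,\alpha)v$, $\mathcal{K}(u,\alpha)\beta=-\rho^*(\alpha,u)\beta$. Therefore $\mathcal{K}\equiv 0$ holds exactly when $\rho(u,\alpha)v=0$ for all $u,v\in V$, $\alpha\in V^*$ and $\rho^*(\alpha,u)\beta=0$ for all $u\in V$, $\alpha,\beta\in V^*$, i.e. exactly when $\rho(u,\alpha)=\rho^*(\alpha,u)=0$ for all $u\in V$, $\alpha\in V^*$; combined with the first paragraph this is the assertion. (It is worth adding that, by the Proposition preceding Theorem \ref{AM501}, $\rho^*(\alpha,u)$ is the transpose of $\rho(u,\alpha)$, so the two vanishing conditions are in fact equivalent to each other.)

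I do not expect a real obstacle here. The only point requiring some care is the bookkeeping for the mixed-type triples $\mathcal{K}(u,\alpha)\beta$ and $\mathcal{K}(\alpha,u)v$, which are not literally displayed in Theorem \ref{AM501}; the skew-symmetry of $\mathcal{K}$ in its first two slots is exactly what reduces them to the displayed expressions, after which the argument is entirely formal.
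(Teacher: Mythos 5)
Your proposal is correct and takes essentially the same route as the paper: both reduce the failure of hom-left-symmetry of the product (\ref{N1}) to the mixed components of the curvature $\mathcal{K}$ and then invoke Theorem \ref{AM501} to conclude that these are exactly $\rho(u,\alpha)$ and $\rho^*(\alpha,u)$. Your explicit identification $ass_\Phi(X,Y,Z)-ass_\Phi(Y,X,Z)=-\mathcal{K}(X,Y)Z$, together with trilinearity and the skew-symmetry of $\mathcal{K}$ in its first two slots to cover the triples $\mathcal{K}(\alpha,u)v$ and $\mathcal{K}(u,\alpha)\beta$ not displayed in Theorem \ref{AM501}, is just a cleaner bookkeeping of the direct computation the paper performs.
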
 
\begin{proof}
A direct computation yields
\begin{align*}
&((u+\alpha)\cdot(v+\beta))\cdot\Phi‎‎(w+\gamma)-\Phi‎‎(u+\alpha)\cdot((v+\beta)\cdot(w+\gamma))-((v+\beta)\cdot(u+\alpha))\cdot\Phi‎‎(w+\gamma)\\
&+\Phi‎‎(v+\beta)\cdot((u+\alpha)\cdot (w+\gamma))\\
&=\mathcal{K}(u,v)w+\mathcal{K}(u,v)\gamma+\mathcal{K}(\alpha,\beta)w+\mathcal{K}(\alpha,\beta)\gamma+{\rho^*}(\alpha,v)w-{\rho}(u,\beta)w+{\rho}^*(\alpha,v)\gamma-{\rho}(u,\beta)\gamma,
\end{align*}
for $u,v,w\in V$ and $\alpha,\beta,\gamma\in V^*$.
Using Theorem \ref{AM501}, the above equation reduces to
\begin{align*}
&((u+\alpha)\cdot(v+\beta))\cdot\Phi‎‎(w+\gamma)-\Phi‎‎(u+\alpha)\cdot((v+\beta)\cdot(w+\gamma))-((v+\beta)\cdot(u+\alpha))\cdot\Phi‎‎(w+\gamma)\\
&+\Phi‎‎(v+\beta)\cdot((u+\alpha)\cdot(w+\gamma))
={\rho^*}(\alpha,v)w-{\rho}(u,\beta)w+{\rho}^*(\alpha,v)\gamma-{\rho}(u,\beta)\gamma.
\end{align*}
The above equation gives us the assertion.
\end{proof}

\begin{proposition}
Extendible hom-algebra $(V\oplus V^*,\cdot,\Phi)$ is a
hom-Lie-admissible algebra if and only if 
\[
{\rho}(u,\alpha)v={\rho}(v,\alpha)u, \ \ \ \ \ {\rho^*}(\alpha,u)\beta={\rho^*}(\beta,u)\alpha,
\]
for any $u,v\in V$ and $\alpha,\beta\in V^*$.
\end{proposition}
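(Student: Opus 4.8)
The plan is to recast hom-Lie-admissibility of $(V\oplus V^*,\cdot,\Phi)$ as a vanishing condition on its tensor curvature, split this condition along the decomposition $V\oplus V^*$, and then read off the two symmetry identities. By definition the algebra is hom-Lie-admissible precisely when its commutator bracket satisfies the hom-Jacobi identity, i.e. $\circlearrowleft_{X,Y,Z}[\Phi(X),[Y,Z]]=0$ for all $X,Y,Z\in V\oplus V^*$; applying Lemma \ref{IMPORT} to the hom-algebra $(V\oplus V^*,\cdot,\Phi)$ this is equivalent to
\[
\circlearrowleft_{X,Y,Z}\mathcal{K}(X,Y)Z=0\qquad\text{for all }X,Y,Z\in V\oplus V^*,
\]
where $\mathcal{K}$ denotes the tensor curvature of the product (\ref{N1}).

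I would then write $X=u+\alpha$, $Y=v+\beta$, $Z=w+\gamma$ with $u,v,w\in V$ and $\alpha,\beta,\gamma\in V^*$, and expand the cyclic sum using that $\mathcal{K}$ is bilinear in its first two arguments, linear in the third, and skew-symmetric in its first two arguments (this last being immediate from (\ref{SS8})). Theorem \ref{AM501} kills every summand whose first two entries lie in the same factor ($\mathcal{K}(u,v)w$, $\mathcal{K}(u,v)\gamma$, $\mathcal{K}(\alpha,\beta)w$, $\mathcal{K}(\alpha,\beta)\gamma$, and their cyclic images), and, together with skew-symmetry, rewrites every mixed summand as a value of $\rho$ or $\rho^*$ — e.g. $\mathcal{K}(u,\beta)w=\rho(u,\beta)w$, $\mathcal{K}(\alpha,v)w=-\mathcal{K}(v,\alpha)w=-\rho(v,\alpha)w$, and dually $\mathcal{K}(\alpha,v)\gamma=\rho^*(\alpha,v)\gamma$, $\mathcal{K}(u,\beta)\gamma=-\rho^*(\beta,u)\gamma$. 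Collecting, the cyclic sum becomes $W+W^*$ with $W\in V$ built only from $\rho$-values and $W^*\in V^*$ only from $\rho^*$-values; concretely $W$ is $\rho(u,\beta)w-\rho(v,\alpha)w$ plus its two rotates under the simultaneous cyclic shift of $(u,v,w)$ and $(\alpha,\beta,\gamma)$, and $W^*$ is the analogous $\rho^*$-expression. Since $V\cap V^*=0$, the equivalent condition $\circlearrowleft\mathcal{K}=0$ holds for all $X,Y,Z$ if and only if $W=0$ and $W^*=0$ for all $u,v,w,\alpha,\beta,\gamma$.

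It then remains to check that $W\equiv0$ is equivalent to $\rho(u,\alpha)v=\rho(v,\alpha)u$, and $W^*\equiv0$ to $\rho^*(\alpha,u)\beta=\rho^*(\beta,u)\alpha$. For the forward direction, if $\rho$ has the stated symmetry the six terms of $W$ cancel in pairs (e.g. $\rho(u,\beta)w$ against $-\rho(w,\beta)u$), so $W\equiv0$, and symmetrically for $W^*$; hence $\circlearrowleft\mathcal{K}=0$. Conversely, since all arguments are free, specializing $W\equiv0$ by setting two of $\alpha,\beta,\gamma$ to zero leaves exactly $\rho(u,\beta)w-\rho(w,\beta)u=0$, and setting two of $u,v,w$ to zero in $W^*\equiv0$ leaves $\rho^*(\beta,u)\gamma-\rho^*(\gamma,u)\beta=0$; these are the asserted symmetries. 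The one genuinely laborious step is the bookkeeping in the middle paragraph — tracking the nine mixed terms of the cyclic sum, reducing them all to a common $\rho$/$\rho^*$ form via skew-symmetry and Theorem \ref{AM501}, and collecting the $V$- and $V^*$-components correctly; beyond Lemma \ref{IMPORT} and Theorem \ref{AM501} no new ingredient is needed.
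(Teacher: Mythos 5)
Your argument is correct, and it reaches exactly the paper's reduced condition, but it is organized differently. The paper does not pass through the hom-Bianchi identity at this point: it expands the cyclic sum $\circlearrowleft[\Phi(u+\alpha),[v+\beta,w+\gamma]]$ directly, using the explicit commutator formula (\ref{AM401}) for the product (\ref{N1}), and collects the result as $\circlearrowleft[\phi(u),[v,w]]+\circlearrowleft[\phi^*(\alpha),[\beta,\gamma]]$ plus twelve $\rho$/$\rho^*$ terms plus six mixed curvature terms, which Theorem \ref{AM501} then removes or identifies; the vanishing of the two pure cyclic bracket sums is absorbed into the same reduction (they equal $\circlearrowleft\mathcal{K}(u,v)w$ and $\circlearrowleft\mathcal{K}(\alpha,\beta)\gamma$, which are zero because $V$ and $V^*$ are hom-left-symmetric). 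You instead invoke Lemma \ref{IMPORT} for the whole algebra $(V\oplus V^*,\cdot,\Phi)$ to restate hom-Lie-admissibility as $\circlearrowleft_{X,Y,Z}\mathcal{K}(X,Y)Z=0$, and then split by multilinearity together with the skew-symmetry $\mathcal{K}(Y,X)=-\mathcal{K}(X,Y)$ (which indeed follows at once from (\ref{SS8})). This buys you tidier bookkeeping: the pure-$V$ and pure-$V^*$ contributions vanish automatically from $\mathcal{K}(u,v)w=\mathcal{K}(u,v)\gamma=\mathcal{K}(\alpha,\beta)w=\mathcal{K}(\alpha,\beta)\gamma=0$ in Theorem \ref{AM501}, with no need to expand the bracket or to appeal separately to hom-Lie-admissibility of the factors, and the mixed terms become $\pm\rho$, $\pm\rho^*$ values exactly as in the paper's displayed sum. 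Your converse step (specializing $\alpha=\gamma=0$, respectively $v=w=0$, to isolate $\rho(u,\beta)w-\rho(w,\beta)u$ and $\rho^*(\beta,u)\gamma-\rho^*(\gamma,u)\beta$) is a clean way to extract the two symmetries, which the paper leaves implicit in "which gives the assertion." The only price of your route is that it leans on the skew-symmetry of $\mathcal{K}$ and on Lemma \ref{IMPORT} applied to a hom-algebra that is not yet known to be hom-Lie, but both are harmless: the Bianchi computation and the skew-symmetry are purely formal consequences of the definition of $\mathcal{K}$ and of the commutator, so no circularity arises.
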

\begin{proof}
If we set
\[ [u+\alpha,v+\beta]=(u+\alpha)\cdot(v+\beta)-(v+\beta)\cdot(u+\alpha),
\]
then
 (\ref{N1}) implies
\begin{align}\label{AM401}
[u+\alpha,v+\beta]=[u,v]-L^t_{{{\phi^*}}(\alpha)}v+L^t_{{\phi^*}(\beta)}u-L^t_{ { \phi}(u)}\beta+L^t_{ { \phi}(v)}\alpha+[\alpha,\beta],
\end{align}
for any $u,v\in V$ and $\alpha,\beta\in V^*$.
According to (\ref{AM502}), the product $\cdot$ is hom-Lie-admissible, if and only if   
\begin{equation}\label{SS7}
\circlearrowleft_{(u,\alpha),(v,\beta),(w,\gamma)}[\Phi(u+\alpha),[v+\beta,w+\gamma]]=0.
\end{equation}
Applying (\ref{AM401}), we obtain 
\begin{align*}
&\circlearrowleft_{(u,\alpha),(v,\beta),(w,\gamma)}[\Phi(u+\alpha),[v+\beta,w+\gamma]]
=\circlearrowleft_{(u,v,w)}[ { \phi}(u),[v,w]]+\circlearrowleft_{(\alpha,\beta\gamma)}[{\phi^*}(\alpha),[\beta,\gamma]]\\
&+\rho(u,\beta)w-\rho(u,\gamma)v-\rho(w,\beta)u+\rho(v,\gamma)u-\rho(v,\alpha)w+\rho(w,\alpha)v-\rho^*(\beta,u)\gamma+\rho^*(\gamma,u)\beta\\
&+\rho^*(\beta,w)\alpha-\rho^*(\gamma,v)\alpha+\rho^*(\alpha,v)\gamma-\rho^*(\alpha,w)\beta+\mathcal{K}(\alpha,\beta)w+\mathcal{K}(\gamma,\alpha)v+\mathcal{K}(\beta,\gamma)u\\
&+\mathcal{K}(u,v)\gamma+\mathcal{K}(w,u)\beta+\mathcal{K}(v,w)\alpha,
\end{align*}
where $u,v,w\in V$ and $\alpha, \beta, \gamma\in V^*$. Using Theorem {\ref{AM501}}, the above equation reduces to the following
\begin{align*}
&\circlearrowleft‎‎‎_{(u,\alpha),(v,\beta),(w,\gamma)}[\Phi(u+\alpha),[v+\beta,w+\gamma]]
=
\rho(u,\beta)w-\rho(u,\gamma)v-\rho(w,\beta)u+\rho(v,\gamma)u-\rho(v,\alpha)w\\
&+\rho(w,\alpha)v-\rho^*(\beta,u)\gamma+\rho^*(\gamma,u)\beta
+\rho^*(\beta,w)\alpha-\rho^*(\gamma,v)\alpha+\rho^*(\alpha,v)\gamma-\rho^*(\alpha,w)\beta,
\end{align*}
which gives the assertion.
\end{proof}
\begin{corollary}\label{SS15}
Let $(V\oplus V^*,\cdot,\Phi)$ be an extendible hom-algebra. Then there is a (involutive) hom-Lie algebra structure on $V\oplus V^*$ if and only if  for any $u,v\in V$ and $\alpha,\beta\in V^*$
\[
{\rho}(u,\alpha)v={\rho}(v,\alpha)u, \ \ \ \ \ {\rho^*}(\alpha,u)\beta={\rho^*}(\beta,u)\alpha.
\]
\end{corollary}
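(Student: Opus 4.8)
The plan is to obtain this as an immediate consequence of the preceding proposition together with Proposition \ref{3.9}. The first thing to pin down is the meaning of the statement: ``there is a hom-Lie algebra structure on $V\oplus V^*$'' is to be read as the assertion that the commutator bracket $[\cdot,\cdot]$ of the product (\ref{N1}), together with $\Phi$, forms a hom-Lie algebra; equivalently, that the extendible hom-algebra $(V\oplus V^*,\cdot,\Phi)$ is hom-Lie-admissible.

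For the ``if'' direction, assume $\rho(u,\alpha)v=\rho(v,\alpha)u$ and $\rho^*(\alpha,u)\beta=\rho^*(\beta,u)\alpha$ hold for all $u,v\in V$ and $\alpha,\beta\in V^*$. By the preceding proposition this is precisely the statement that $(V\oplus V^*,\cdot,\Phi)$ is hom-Lie-admissible. Now I would invoke Proposition \ref{3.9}: its proof shows that $\Phi(x\cdot y)=\Phi(x)\cdot\Phi(y)$ forces $\Phi[x,y]=[\Phi(x),\Phi(y)]$ for the commutator, the commutator is skew-symmetric by construction, and the hom-Jacobi identity for $[\cdot,\cdot]$ is exactly hom-Lie-admissibility; hence $(V\oplus V^*,[\cdot,\cdot],\Phi)$ is a hom-Lie algebra. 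Finally, the computation recorded right after (\ref{SS16}) gives $\Phi^2=\mathrm{Id}_{V\oplus V^*}$, so this hom-Lie algebra is involutive, which accounts for the parenthetical in the statement.

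For the ``only if'' direction I would simply run the argument in reverse: if $(V\oplus V^*,[\cdot,\cdot],\Phi)$ is a hom-Lie algebra with $[\cdot,\cdot]$ the commutator of (\ref{N1}), then by the definition of a hom-Lie algebra its bracket satisfies the hom-Jacobi identity, which is the definition of $(V\oplus V^*,\cdot,\Phi)$ being hom-Lie-admissible; applying the preceding proposition once more returns the two $\rho$-symmetry relations.

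I do not anticipate a genuine obstacle here: the whole computational content — expanding $\circlearrowleft_{(u,\alpha),(v,\beta),(w,\gamma)}[\Phi(u+\alpha),[v+\beta,w+\gamma]]$ via (\ref{AM401}) and simplifying it with Theorem \ref{AM501} — has already been carried out in the preceding proposition. The only points that deserve a sentence of care are the interpretation of which bracket the statement refers to, and the remark that involutivity is automatic from $\Phi^2=\mathrm{Id}$.
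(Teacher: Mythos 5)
Your proposal is correct and follows essentially the same route as the paper: the paper's own (very terse) proof just records the morphism identity $\Phi[u+\alpha,v+\beta]=[\Phi(u+\alpha),\Phi(v+\beta)]$, leaving the hom-Jacobi part to the preceding proposition, which is exactly the reduction you spell out via Proposition \ref{3.9} together with the hom-Lie-admissibility criterion. Your additional remarks on which bracket is meant and on involutivity from $\Phi^2=\mathrm{Id}_{V\oplus V^*}$ only make explicit what the paper leaves implicit.
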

\begin{proof}

It is fairly easy to see that
\[
\Phi[u+\alpha,v+\beta]=[\Phi(u+\alpha),\Phi(v+\beta)].
\]
\end{proof}
\begin{theorem}
Let $T^*V=(V\oplus V^*, [\cdot,\cdot],\Phi)$ be an extendible hom-Lie algebra endowed with a non-degenerate
bilinear form $\Omega_{_{T^*V}}$ given by
\begin{equation}\label{AM505}
\Omega_{_{T^*V}}(‎ u+\alpha, v+\beta)=‎‎\prec‎ \beta,u\succ-‎‎\prec\alpha,v‎\succ,\ \ \ \ \ \forall u,v\in V, \ \ \ \forall \alpha,\beta\in V^*.
\end{equation}
Then $T^*V$ is a phase space.
\end{theorem}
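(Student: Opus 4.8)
The plan is the following. To exhibit $T^*V$ as a phase space of $V$ one must provide a hom-algebra structure on $V\oplus V^*$ in which $(V,\phi)$ and $(V^*,\phi^*)$ are sub-hom-algebras and with respect to which the canonical form $\Omega_{_{T^*V}}$ of (\ref{AM505}) is a symplectic structure on the hom-Lie algebra $(V\oplus V^*,[\cdot,\cdot],\Phi)$. The hom-algebra structure is at hand: it is the extendible product $\cdot$ of (\ref{N1}) together with $\Phi$ of (\ref{SS16}), which is a hom-algebra by Theorem \ref{AM500}. Setting $\alpha=\beta=0$ in (\ref{N1}) gives $(u+0)\cdot(v+0)=u\cdot v$, setting $u=v=0$ gives $(0+\alpha)\cdot(0+\beta)=\alpha\cdot\beta$, and $\Phi$ restricts to $\phi$ on $V$ and to $\phi^*$ on $V^*$; hence $(V,\phi)$ and $(V^*,\phi^*)$ are sub-hom-algebras, and by (\ref{AM401}) the bracket restricts to the brackets of $V$ and of $V^*$. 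Since $(V\oplus V^*,[\cdot,\cdot],\Phi)$ is a hom-Lie algebra by hypothesis and $\Phi=\phi\oplus\phi^*$ is invertible (because $\phi$ is involutive), it remains only to check that $\Omega_{_{T^*V}}$ is symplectic, that is, skew-symmetric, non-degenerate, and a $2$-hom-cocycle (which, in the sense of this paper, comprises both the cyclic identity and the $\Phi$-invariance $\Omega_{_{T^*V}}(\Phi(a),\Phi(b))=\Omega_{_{T^*V}}(a,b)$). Skew-symmetry is immediate from (\ref{AM505}), and non-degeneracy is part of the hypothesis (indeed $\Omega_{_{T^*V}}$ is, up to sign, the duality pairing between $V$ and $V^*$).

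I would first dispose of $\Phi$-invariance. Using $\Phi(u+\alpha)=\phi(u)+\phi^*(\alpha)$, $\phi^*=\phi^t$ (so that $\prec\phi^*(\xi),x\succ=\prec\xi,\phi(x)\succ$) and the involutivity $\phi^2=\mathrm{id}_V$,
\[
\Omega_{_{T^*V}}(\Phi(u+\alpha),\Phi(v+\beta))=\prec\phi^*(\beta),\phi(u)\succ-\prec\phi^*(\alpha),\phi(v)\succ=\prec\beta,\phi^2(u)\succ-\prec\alpha,\phi^2(v)\succ=\Omega_{_{T^*V}}(u+\alpha,v+\beta).
\]

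The substantive step is the cyclic identity
\[
\Omega_{_{T^*V}}([x,y],\Phi(z))+\Omega_{_{T^*V}}([y,z],\Phi(x))+\Omega_{_{T^*V}}([z,x],\Phi(y))=0,\qquad x,y,z\in V\oplus V^*,
\]
which I would establish by direct expansion. Write $x=u+\alpha$, $y=v+\beta$, $z=w+\gamma$. By (\ref{AM401}), $[x,y]$ has $V$-part $[u,v]-L^t_{\phi^*(\alpha)}v+L^t_{\phi^*(\beta)}u$ and $V^*$-part $-L^t_{\phi(u)}\beta+L^t_{\phi(v)}\alpha+[\alpha,\beta]$; pairing it against $\Phi(z)=\phi(w)+\phi^*(\gamma)$ through (\ref{AM505}) yields six terms, three of which contain exactly one of $\alpha,\beta,\gamma$ and three of which contain exactly two of them (there are no others, since (\ref{AM505}) pairs $V$ with $V^*$). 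By the defining property $\prec L^t_s\xi,v\succ=\prec\xi,s\cdot v\succ$ of the transpose operators, by the fact that $\phi$ and $\phi^*$ are algebra morphisms, and by $\phi^*=\phi^t$, each of these terms rewrites as a pairing $\prec\eta,\phi(s)\succ$ built from a product in $V$ or in $V^*$. Forming the cyclic sum, the nine terms carrying a single dual argument split into three triples, one for each of the sets $\{u,v,\gamma\}$, $\{v,w,\alpha\}$, $\{w,u,\beta\}$, and each triple collapses to $0$ by the commutator relation $[u,v]=u\cdot v-v\cdot u$ in $V$; symmetrically, the nine terms carrying two dual arguments split into three triples, each collapsing to $0$ by $[\alpha,\beta]=\alpha\cdot\beta-\beta\cdot\alpha$ in $V^*$ (the identities (\ref{AM600}) providing the convenient form of this bookkeeping). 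Hence the cyclic sum vanishes, $\Omega_{_{T^*V}}$ is a symplectic structure on $(V\oplus V^*,[\cdot,\cdot],\Phi)$, and $T^*V$ is a phase space of $V$.

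The only real difficulty lies in that last step: keeping track of signs, of the two species of transpose operator (the duals of left multiplications in $V$ and in $V^*$), and of where $\phi$, $\phi^*$ and $\phi^2$ occur. There is no conceptual obstacle — once the cyclic sum is organized according to how many of the dual arguments $\alpha,\beta,\gamma$ a term involves, each group collapses through the commutator relations above, in exact parallel with the classical construction of a phase space out of a (hom-)left-symmetric algebra.
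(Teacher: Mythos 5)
Your proposal is correct and follows essentially the same route as the paper's own proof: it establishes $\Phi$-invariance of $\Omega_{_{T^*V}}$ using $\phi^*=\phi^t$ and involutivity, and verifies the cyclic $2$-hom-cocycle identity by expanding the bracket (\ref{AM401}) against the canonical pairing and cancelling the resulting terms through the identities (\ref{AM600}) together with the commutator relations in $V$ and $V^*$. Your extra remarks (sub-hom-algebra property of $(V,\phi)$ and $(V^*,\phi^*)$, skew-symmetry and non-degeneracy) only make explicit what the paper leaves implicit.
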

\begin{proof}
Using (\ref{AM600}) and (\ref{AM401}), we get
\begin{align*}
\Omega_{_{T^*V}}(‎ \Phi(u+\alpha),\Phi( v+\beta))=&\Omega_{_{T^*V}}(‎ \phi(u)+\phi^*(\alpha),\phi( v)+\phi^*(\beta))=‎‎\prec‎ \phi^*(\beta),‎ \phi(u)\succ-‎‎\prec\phi^*(\alpha),\phi( v)‎\succ\\
&=‎‎\prec‎ \beta,u\succ-‎‎\prec\alpha,v‎\succ=\Omega_{_{T^*V}}(‎ u+\alpha, v+\beta),
\end{align*}
and 
\begin{align*}
&\circlearrowleft‎‎‎_{(u,\alpha),(v,\beta),(w,\gamma)}\Omega_{_{T^*V}}\big([‎ ‎ u+\alpha, v+\beta],\Phi(w+\gamma)\big)\\
&=\circlearrowleft‎‎‎_{(u,\alpha),(v,\beta),(w,\gamma)}\Omega_{_{T^*V}}\big([u,v]-L^t_{{{\phi^*}}(\alpha)}v+L^t_{{\phi^*}(\beta)}u-L^t_{ { \phi}(u)}\beta+L^t_{ { \phi}(v)}\alpha+[\alpha,\beta],\phi(w)+\phi^*(\gamma)\big)\\
&=\circlearrowleft‎‎‎_{(u,\alpha),(v,\beta),(w,\gamma)}\big(\prec\phi^*(\gamma),[u,v]-L^t_{{{\phi^*}}(\alpha)}v+L^t_{{\phi^*}(\beta)}u‎\succ‎‎-\prec‎-L^t_{ { \phi}(u)}\beta+L^t_{ { \phi}(v)}\alpha+[\alpha,\beta],\phi(w)\succ‎‎\big)\\
&=\prec‎\phi^*(\gamma),u\cdot v-v\cdot u+\alpha\cdot v-\beta\cdot u\succ‎‎+\prec‎-u\cdot\beta+v\cdot\alpha-\alpha\cdot\beta+\beta\cdot \alpha,\phi(w)\succ\\
&\ \ \ + \prec‎\phi^*(\alpha),v\cdot w-w\cdot v+\beta\cdot w-\gamma\cdot v\succ‎‎+\prec‎-v\cdot\gamma+w\cdot\beta-\beta\cdot\gamma+\gamma\cdot \beta,\phi(u)\succ‎‎‎‎\\
&\ \ \ + \prec‎\phi^*(\beta),w\cdot u-u\cdot w+\gamma\cdot u-\alpha\cdot w\succ‎‎+\prec‎-w\cdot\alpha+u\cdot\gamma-\gamma\cdot\alpha+\alpha\cdot \gamma,\phi(v)\succ‎‎‎‎=0,
\end{align*}
for any $u,v\in V$ and $\alpha,\beta\in V^*$. Thus $\Omega_{_{T^*V}}$ is a $2$-hom-cocycle form and consequently $T^*V$ is a phase space.
\end{proof}
\begin{lemma}
If the phase space $T^*V=(V\oplus V^*,[\cdot,\cdot],\Phi)$ is endowed with  a non degenerate
bilinear form $<,>_{_{T^*V}}$ given by
\[
<‎ u+\alpha, v+\beta>_{_{T^*V}}=‎‎\prec\alpha,v‎\succ‎‎+‎‎\prec‎ \beta,u\succ‎‎,\ \ \ \ \ \forall u,v\in V, \ \ \ \forall \alpha,\beta\in V^*.
\]
Then $(T^*V, <,>_{_{T^*V}})$ is a pseudo-Riemannian hom-Lie algebra.
\end{lemma}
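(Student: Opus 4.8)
The plan is to check, one by one, the three ingredients in the definition of a pseudo-Riemannian hom-Lie algebra for the pair $(T^*V,<,>_{_{T^*V}})$: that $T^*V$ is a finite-dimensional hom-Lie algebra, that $<,>_{_{T^*V}}$ is a bilinear symmetric non-degenerate form, and that the compatibility condition (\ref{AM302}), i.e. $<\Phi(x),\Phi(y)>_{_{T^*V}}=<x,y>_{_{T^*V}}$ for all $x,y\in V\oplus V^*$, is satisfied.

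First, the hom-Lie algebra structure on $T^*V$ comes for free: by hypothesis $T^*V=(V\oplus V^*,[\cdot,\cdot],\Phi)$ is a phase space, and by definition a phase space is in particular a hom-Lie algebra (endowed moreover with a symplectic form). Bilinearity of $<,>_{_{T^*V}}$ is clear since the duality pairing $\prec\cdot,\cdot\succ$ is bilinear. Symmetry is immediate from the formula: swapping $u+\alpha$ with $v+\beta$ turns $\prec\alpha,v\succ+\prec\beta,u\succ$ into $\prec\beta,u\succ+\prec\alpha,v\succ$, which is the same scalar. For non-degeneracy, take $u+\alpha\neq0$; if $u\neq0$, pick $\beta\in V^*$ with $\prec\beta,u\succ\neq0$ and evaluate $<u+\alpha,\beta>_{_{T^*V}}=\prec\beta,u\succ\neq0$; if $u=0$ then $\alpha\neq0$ and one picks $v\in V$ with $\prec\alpha,v\succ\neq0$, giving $<u+\alpha,v>_{_{T^*V}}=\prec\alpha,v\succ\neq0$.

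The one genuine computation is the invariance (\ref{AM302}). Using $\Phi(u+\alpha)=\phi(u)+\phi^*(\alpha)$ and $\Phi(v+\beta)=\phi(v)+\phi^*(\beta)$ one finds
\[
<\Phi(u+\alpha),\Phi(v+\beta)>_{_{T^*V}}=\prec\phi^*(\alpha),\phi(v)\succ+\prec\phi^*(\beta),\phi(u)\succ .
\]
Since $\phi^*=\phi^t$, the first summand equals $\prec\alpha,\phi(\phi(v))\succ=\prec\alpha,\phi^2(v)\succ$, and because $(V,\cdot,\phi)$ and $(V^*,\cdot,\phi^*)$ are involutive (equivalently $\Phi^2=\mathrm{id}$, as noted right after (\ref{SS16})), this is just $\prec\alpha,v\succ$; the second summand is $\prec\beta,u\succ$ by the same reasoning. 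Hence the expression equals $\prec\alpha,v\succ+\prec\beta,u\succ=<u+\alpha,v+\beta>_{_{T^*V}}$, which is exactly (\ref{AM302}).

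I do not expect any real obstacle: once $T^*V$ is recognized as a hom-Lie algebra via the phase-space hypothesis, everything reduces to the identity $\prec\phi^t(\alpha),\phi(v)\succ=\prec\alpha,\phi^2(v)\succ$ combined with involutivity $\phi^2=\mathrm{id}$. The only thing to be careful about is to verify all the defining clauses (symmetry and non-degeneracy of the form, not merely its $\Phi$-invariance) rather than just the last one.
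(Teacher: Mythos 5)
Your proposal is correct and follows essentially the same route as the paper: the hom-Lie structure is taken from the phase-space hypothesis, symmetry is read off the formula, and $\Phi$-invariance is verified by the computation $\prec\phi^{*}(\alpha),\phi(v)\succ+\prec\phi^{*}(\beta),\phi(u)\succ=\prec\alpha,v\succ+\prec\beta,u\succ$. Your only additions are making explicit the use of $\phi^{*}=\phi^{t}$ together with involutivity ($\phi^{2}=\mathrm{id}$) in that step, and checking non-degeneracy, both of which the paper leaves implicit.
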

\begin{proof}
Obviously, we have
\[
<‎ u+\alpha, v+\beta>_{_{T^*V}}=<‎ v+\beta,u+\alpha>_{_{T^*V}}.
\]
Also, we obtain
\begin{align*}
<\Phi(‎ u+\alpha), \Phi(v+\beta)>_{_{T^*V}}=&<‎ \phi(u)+\phi^*(\alpha),\phi( v)+\phi^*(\beta)>_{_{T^*V}}
=‎‎\prec\phi^*(\alpha),\phi( v)\succ+‎‎\prec‎ \phi^*(\beta),‎ \phi(u)\succ\\
&=‎‎\prec\alpha,v‎\succ‎‎+‎‎\prec‎ \beta,u\succ=<u+\alpha,v+\beta>_{_{T^*V}},
\end{align*}
for any $u,v\in V$ and $\alpha,\beta\in V^*$.
\end{proof}
Let  $K:V\oplus V^*\rightarrow V\oplus V^*$ be the endomorphism given by $K(u+\alpha)=\phi(u)-\phi^*(\alpha)$. Then it follows that $(\Phi\circ K)(u+\alpha)=u-\alpha$,  $K\circ\Phi=\Phi\circ K$ and $K^2=Id$. Moreover
\[
(\Phi\circ K)^2(u+\alpha)=(\Phi\circ K)(u-\alpha)=u+\alpha.
\]
Also, we obtain
\begin{align*}
<(\Phi\circ K)(u+\alpha),v+\beta>_{_{T^*V}}=&<u-\alpha,v+\beta>=-‎‎\prec\alpha,v‎\succ‎‎+‎‎\prec‎ \beta,u\succ\\
=&-<u+\alpha,(\Phi\circ K)(v+\beta)>_{_{T^*V}}.
\end{align*}
On the other hand, we get $L_{u+\alpha}\circ\Phi\circ K=\Phi\circ K\circ L_{u+\alpha}$, because
\begin{align*}
(L_{u+\alpha}\circ\Phi\circ K)(v+\beta)=&L_{u+\alpha}(v-\beta)=L_{u+\alpha}(v-\beta)=(u+\alpha)\cdot(v-\beta)\\
=&u\cdot v-L^t_{{{\phi^*}}(\alpha)}v+L^t_{  { \phi}(u)}\beta-\alpha\cdot\beta,
\end{align*}
and
\begin{align*}
(\Phi\circ K\circ L_{u+\alpha})(v+\beta)=&(\Phi\circ K)(({u+\alpha})\cdot(v+\beta))=(\Phi\circ K)(u\cdot v-L^t_{{{\phi^*}}(\alpha)}v-L^t_{  { \phi}(u)}\beta+\alpha\cdot\beta)\\
=&u\cdot v-L^t_{{{\phi^*}}(\alpha)}v+L^t_{  { \phi}(u)}\beta-\alpha\cdot\beta.
\end{align*}
From the above expression, we can deduce the following
\begin{theorem}
Let $(V\oplus V^*,\cdot,\Phi)$ be an extendible hom-algebra. Then $(T^*V, <,>_{_{T^*V}},K)$ is a para-K\"{a}hler hom-Lie algebra. 
\end{theorem}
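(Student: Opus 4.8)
The plan is to check, one by one, the conditions in the definition of a para-K\"ahler hom-Lie algebra for the data $(T^*V,[\cdot,\cdot],\Phi,<,>_{_{T^*V}},K)$, most of which has already been prepared in the displays immediately preceding the statement. First I would collect the structural background: by Theorem~\ref{AM500}, $(V\oplus V^*,\cdot,\Phi)$ is a hom-algebra, so $\Phi$ is multiplicative for $\cdot$, hence (as in Proposition~\ref{3.9}) a morphism for the commutator bracket $[\cdot,\cdot]$; and $\Phi^2(u+\alpha)={\phi}^2(u)+(\phi^*)^2(\alpha)=u+\alpha$, so the phase space $T^*V$ is an \emph{involutive} hom-Lie algebra (here one uses Corollary~\ref{SS15} to know that $T^*V$ carries a hom-Lie structure at all). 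Together with the preceding Lemma, which shows $<,>_{_{T^*V}}$ is symmetric, non-degenerate, and satisfies $<\Phi(x),\Phi(y)>_{_{T^*V}}=<x,y>_{_{T^*V}}$, this gives that $(T^*V,[\cdot,\cdot],\Phi,<,>_{_{T^*V}})$ is a pseudo-Riemannian involutive hom-Lie algebra.

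Next I would verify that $K$, with $K(u+\alpha)=\phi(u)-\phi^*(\alpha)$, is an almost product structure: it is a linear isomorphism since $K^2=Id$, and the relations $K^2=Id$ and $K\circ\Phi=\Phi\circ K$ follow from the involutivity of $\phi$ and $\phi^*$ --- precisely the identities already displayed before the statement. The endomorphism $\Phi\circ K$ then sends $u+\alpha$ to $u-\alpha$, and the short computation $<u-\alpha,v+\beta>_{_{T^*V}}=-\prec\alpha,v\succ+\prec\beta,u\succ=-<u+\alpha,v-\beta>_{_{T^*V}}$ shows it is skew-symmetric with respect to $<,>_{_{T^*V}}$.

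The step I expect to be the main obstacle is showing that the product $\cdot$ of (\ref{N1}) is the \emph{hom-Levi-Civita} product of $(T^*V,[\cdot,\cdot],\Phi,<,>_{_{T^*V}})$, since the last axiom of a para-K\"ahler hom-Lie algebra is phrased in terms of that product. For this I would invoke the uniqueness part of the hom-Levi-Civita theorem: it suffices to check that $\cdot$ satisfies (\ref{SL}) and (\ref{L2}) relative to $<,>_{_{T^*V}}$ and $\Phi$. The identity (\ref{SL}), $[x,y]=x\cdot y-y\cdot x$, is exactly the definition of the bracket on $T^*V$. For (\ref{L2}) one writes $x,y,z$ in terms of their $V$- and $V^*$-components; since $V$ and $V^*$ are each isotropic for $<,>_{_{T^*V}}$ (because $<u+\alpha,v+\beta>_{_{T^*V}}=\prec\alpha,v\succ+\prec\beta,u\succ$), every term in which the two arguments of $<,>_{_{T^*V}}$ both lie in $V$, or both in $V^*$, vanishes, and the surviving mixed terms reduce precisely to the two relations of (\ref{AM600}). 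Hence $\cdot$ is the hom-Levi-Civita product.

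Finally, $\Phi\circ K$ is invariant under $\cdot$, i.e.\ $L_{u+\alpha}\circ\Phi\circ K=\Phi\circ K\circ L_{u+\alpha}$: expanding both sides with (\ref{N1}) and $K(v+\beta)=\phi(v)-\phi^*(\beta)$ produces $u\cdot v-L^t_{\phi^*(\alpha)}v+L^t_{\phi(u)}\beta-\alpha\cdot\beta$ in either order, which is the computation displayed just before the statement. Assembling the four items --- $(T^*V,[\cdot,\cdot],\Phi,<,>_{_{T^*V}})$ pseudo-Riemannian involutive hom-Lie, $K$ an almost product structure, $\Phi\circ K$ skew-symmetric, and $\Phi\circ K$ invariant under the hom-Levi-Civita product --- yields that $(T^*V,<,>_{_{T^*V}},K)$ is a para-K\"ahler hom-Lie algebra.
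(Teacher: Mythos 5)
Your proposal is correct and follows essentially the same route as the paper, whose "proof" simply assembles the computations displayed just before the theorem: the lemma that $(T^*V,<,>_{_{T^*V}})$ is a pseudo-Riemannian hom-Lie algebra, the identities $K^2=Id$, $K\circ\Phi=\Phi\circ K$, $(\Phi\circ K)^2=Id$, the skew-symmetry of $\Phi\circ K$ with respect to $<,>_{_{T^*V}}$, and the commutation $L_{u+\alpha}\circ\Phi\circ K=\Phi\circ K\circ L_{u+\alpha}$ for the product (\ref{N1}). The one place where you go beyond the paper is the identification of the product (\ref{N1}) with the hom-Levi-Civita product of $(T^*V,[\cdot,\cdot],\Phi,<,>_{_{T^*V}})$: since the para-K\"ahler condition is phrased through the Levi-Civita product, this step is genuinely needed, and the paper leaves it implicit. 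Your argument for it is sound: $\Phi$ is an isomorphism (indeed involutive), so the uniqueness part of the hom-Levi-Civita theorem applies; (\ref{SL}) holds because the bracket on $T^*V$ is by construction the commutator of (\ref{N1}); and (\ref{L2}) reduces, after the terms with both arguments in $V$ or both in $V^*$ vanish by isotropy, exactly to the two relations (\ref{AM600}), whose pairwise cancellations one can check directly. One small caveat, inherited from the paper rather than introduced by you: an extendible hom-algebra need not be hom-Lie-admissible, so strictly the hypothesis should be an extendible hom-Lie algebra (i.e., the conditions of Corollary \ref{SS15} should be assumed), as in the phase-space theorem immediately preceding; your parenthetical appeal to Corollary \ref{SS15} slightly overstates what that corollary gives, but this mirrors the paper's own imprecision and does not affect the rest of your argument.
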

\bigskip \addcontentsline{toc}{section}{References}

\end{document}